\documentclass[oneside,reqno,11pt,a4paper]{amsart}

\usepackage[foot]{amsaddr}
\usepackage[T1]{fontenc}

\usepackage[margin=2.4cm]{geometry}

\usepackage{etoolbox}
\patchcmd{\abstract}{\null\vfil}{}{}{}

\usepackage{caption}
\usepackage[ruled]{algorithm2e}
\usepackage{subcaption}
\usepackage{doi}
\usepackage{color}
\usepackage[skins]{tcolorbox}
\usepackage{framed}
\usepackage{verbatim}
\usepackage{fancyvrb}
\usepackage{newtxtext}
\usepackage[final,nopatch=footnote]{microtype}
\usepackage[final]{pdfpages}
\usepackage{pgfgantt}
\usepackage[super]{nth}
\usepackage{comment}

\usepackage{amsfonts}
\usepackage{mathtools}% loads amsmath; load before hyperref/cleveref for showonlyrefs
\usepackage{amssymb}
\usepackage{amstext}
\usepackage{amscd}
\usepackage{amsthm}
\usepackage{array}
\usepackage{booktabs}
\usepackage{csquotes}
\usepackage{hyperref}
\hypersetup{
breaklinks=true, bookmarksopen=true, pdftitle={}, % insert title
pdfauthor={}, % insert author
colorlinks=true,  % false: boxed links; true: coloured links
linkcolor=blue, % color of internal links (change box color with linkbordercolor)
citecolor=dgreen,        % color of links to bibliography
filecolor=black,      % color of file links
urlcolor=blue           % color of external links
}
\usepackage{cleveref}
\crefname{section}{Sect.}{Sects.}
\Crefname{section}{Sect.}{Sects.}
\crefname{subsection}{Sect.}{Sects.}
\Crefname{subsection}{Sect.}{Sects.}
\crefname{subsubsection}{Sect.}{Sects.}
\Crefname{subsubsection}{Sect.}{Sects.}
\crefname{algorithm}{Alg.}{Algs.}
\Crefname{algorithm}{Alg.}{Algs.}
\crefname{figure}{Fig.}{Figs.}
\Crefname{figure}{Fig.}{Figs.}
\crefname{table}{Table}{Tables}
\Crefname{table}{Table}{Tables}

\usepackage{makecell}
\usepackage[dvipsnames,svgnames,dvipsnames]{xcolor}
\usepackage{colortbl}
\usepackage{emptypage}
\usepackage[shortlabels]{enumitem}
\usepackage[english]{babel}
\usepackage{eqparbox}
\usepackage{extarrows}
\usepackage{float}
\usepackage{graphicx} \graphicspath{{figs_used/}}
\usepackage{latexsym}
\usepackage[nohyperlinks]{acronym}
\usepackage{listings}
\usepackage{longtable}
\usepackage{marvosym}
\usepackage{mathrsfs}
\usepackage{stmaryrd} 
\PassOptionsToPackage{dvipsnames,svgnames}{xcolor}
\usepackage{multirow}
\usepackage{setspace}
\usepackage{siunitx}
\usepackage{soul}
\usepackage{todonotes}
\usepackage{tabularx}
\usepackage{textgreek}
\usepackage{tikz}
  \usetikzlibrary{matrix,calc}
  \usetikzlibrary{decorations.markings,decorations.pathreplacing}
  \usetikzlibrary{patterns}
\usepackage{url}
\usepackage{xspace}
\usepackage[backend=biber, defernumbers=true, maxbibnames=5,
style=numeric-comp, isbn=false, bibencoding=utf8, safeinputenc,
url=false, doi=true, giveninits=true, backref=false, sorting=nyt, date=year,
sortcites=true, % enables sorting within multiple citations
]{biblatex}

\hypersetup{
breaklinks=true, 
bookmarksopen=true, 
pdftitle={Automatic Anisotropic h-Adaptive Quadrature for Neural Network Approximation of PDEs}, 
pdfauthor={Sai Krishna Kishore Nori},
colorlinks=true,  % false: boxed links; true: coloured links
linkcolor=blue, % color of internal links (change box color with linkbordercolor)
citecolor=blue,        % color of links to bibliography
filecolor=black,      % color of file links
urlcolor=blue           % color of external links
}
\definecolor{bg}{rgb}{0.93,0.93,0.93}

\DeclareFieldFormat{journaltitle}{\mkbibemph{#1}\isdot}
\renewbibmacro*{journal+issuetitle}{%
  \usebibmacro{journal}%
  \setunit*{\addcomma\space}%
  \iffieldundef{series}
    {} {\newunit \printfield{series}%
     \setunit{\addspace}}%
  \usebibmacro{volume+number+eid}%
  \setunit{\addspace}%
  \usebibmacro{issue+date}%
  \setunit{\addcolon\space}%
  \usebibmacro{issue}%
  \newunit}

\renewbibmacro*{doi+eprint+url}{%
    \printfield{doi}%
    \newunit\newblock%
    \iftoggle{bbx:eprint}{%
        \usebibmacro{eprint}%
    }{}%
    \newunit\newblock%
    \iffieldundef{doi}{%
        \usebibmacro{url+urldate}}%
        {}%
}

\usetikzlibrary{quotes, angles, patterns, calc}

\newcolumntype{L}{>{\centering\arraybackslash}m{3cm}}
\DeclareBibliographyCategory{cited}
\AtEveryCitekey{\addtocategory{cited}{\thefield{entrykey}}}

\usepackage{stackengine}
\stackMath
\newlength\matfield
\newlength\tmplength

\setlist[enumerate]{label = (\arabic*), ref = \arabic*}
\crefformat{enumi}{item~(#2#1#3)}
\Crefformat{enumi}{Item~(#2#1#3)}
\crefrangeformat{enumi}{items~(#3#1#4) to~(#5#2#6)}
\Crefrangeformat{enumi}{Items~(#3#1#4) to~(#5#2#6)}

\definecolor{Gray}{gray}{0.9}
\definecolor{dgreen}{rgb}{0,.8,0}
\definecolor{red}{HTML}{D62728}
\definecolor{blue}{RGB}{ 0, 109, 219}
\theoremstyle{definition}

\newtheorem{example}{Example}[section]

\newtheorem{remark}{Remark}[section]
\newtheorem{assumption}{Assumption}[section]
\newtheorem{lemma}{Lemma}[section]
\newtheorem{corollary}{Corollary}[section]

\newcommand{\normalarray}{\renewcommand{\arraystretch}{1.1}}

\normalarray

\newcommand{\TT}{\mathcal{T}}

 {\begin{list}{}%
    {\setlength{\leftmargin}{#1}}%
  \item[]%
 }
{\end{list}}

\tikzset{square matrix/.style={
    matrix of nodes,
    column sep=-\pgflinewidth, row sep=-\pgflinewidth,
    nodes={draw,
      minimum height=15pt,
      anchor=center,
      text width=13pt,
      align=center,
      inner sep=0pt
    },
  },
  square matrix/.default=2cm
}

\colorlet{trans}{blue!50}
\colorlet{trans2}{red!20}

\newcommand{\norm}[1]{\left\lVert #1 \right\rVert}

\DeclareMathOperator{\atan}{atan}

\DeclareMathOperator*{\argmin}{\arg\!\min}
\DeclareMathOperator*{\argmax}{\arg\!\max}

\newcommand{\nn}{neural network}

\newcommand{\abs}[1]{\left\lvert#1\right\rvert}
\newcommand{\btheta}{\pmb{\theta}}
\newcommand{\T}{\mathcal{T}}

\makeatletter
\newcommand*\bigcdot{\mathpalette\bigcdot@{.5}}
\newcommand*\bigcdot@[2]{\mathbin{\vcenter{\hbox{\scalebox{#2}{$\m@th#1\bullet$}}}}}
\makeatother

\allowdisplaybreaks

\begin{document}
 
\acrodef{pde}[PDE]{partial differential equation}
\acrodef{fe}[FE]{finite element}
\acrodef{fem}[FEM]{finite element method}
\acrodef{ad}[AD]{automatic differentiation}
\acrodef{dof}[DOF]{degree of freedom}
\acrodefplural{dof}[DOFs]{degrees of freedom}
\acrodef{agfe}[agFE]{aggregated finite element}
\acrodef{agfem}[AgFEM]{aggregated finite element method}
\acrodef{cg}[CG]{continuous Galerkin}
\acrodef{dg}[DG]{discontinuous Galerkin}
\acrodef{nn}[NN]{neural network}
\acrodef{pinn}[PINN]{Physics Informed Neural Network}
\acrodef{dls}[DLS]{Deep Least Square}
\acrodef{pdls}[PDLS]{Primal Deep Least Squares}
\acrodef{dgm}[DGM]{Deep Galerkin Method}
\acrodef{feinn}[FEINN]{Finite Element Interpolated Neural Network}
\acrodef{vpinn}[VPINN]{Variational Physics-Informed Neural Network}
\acrodef{ivpinn}[IVPINN]{interpolated variational physics-informed neural network}
\acrodef{elm}[ELM]{Extreme Learning Machine}
\acrodef{rfm}[RFM]{Random Feature Method}
\acrodef{kan}[KAN]{Kolmogorov-Arnold Network}
\acrodef{dgipnn}[DGIPNN]{DG Interpolated Piecewise NN}
\acrodef{autodiff}[AD]{automatic differentiation}
\acrodef{ffnn}[FFNN]{fully-connected feed-forward Neural Network}
\acrodef{mlp}[MLP]{multi-layer perceptron}
\acrodef{lhc}[LHC]{Latin hypercube sampling}
\acrodef{mc}[MC]{Monte Carlo}
\acrodef{qmc}[QMC]{Quasi-Monte Carlo}
\acrodef{amr}[AMR]{adaptive mesh refinement}
\acrodef{sciml}[SciML]{Scientific Machine Learning}
\acrodef{aq}[AQ]{adaptive quadrature}
\acrodef{wan}[WAN]{weak adversarial network}
\acrodefplural{wan}[WANs]{weak adversarial networks}
\acrodef{fdm}[FDM]{finite difference method}
\acrodef{fvm}[FVM]{finite volume method}
\acrodef{feinn}[FEINN]{finite element interpolated neural network}
\acrodef{amr}[AMR]{adaptive mesh refinement}
\acrodef{api}[API]{application programming interface}
\acrodef{gpu}[GPU]{Graphics Processing Unit}
\acrodef{cpu}[CPU]{Central Processing Unit}
\acrodef{gl}[GL]{Gauss--Legendre}
\acrodef{aq}[AQ]{adaptive quadrature}
\acrodef{d2rm}[D2RM]{Deep Double Ritz Method}
\acrodef{drm}[DRM]{Deep Ritz Method}

\title[Adaptive anisotropic composite quadratures for residual minimisation]{Adaptive anisotropic composite quadratures for residual minimisation in neural PDE approximations}
\author[S. Badia]{Santiago Badia$^{\dagger}$}
\address{$^\dagger$School of Mathematics\\Monash University\\Clayton\\Victoria 3800\\Australia}
\email{santiago.badia@monash.edu}
\author[K. Nori]{Kishore Nori$^{\dagger}$}
\email{sai.nori@monash.edu}
% \date{\today}  
 
\begin{abstract}
We study the role of numerical quadrature in residual-minimisation methods for neural network approximation of partial differential equations. We first present an abstract error framework that separates approximation, quadrature and optimisation errors, and derive a nonlinear Strang-type estimate quantifying how inaccuracies in the discrete loss affect the final approximation. Motivated by this analysis, we propose an anisotropic adaptive composite quadrature strategy that controls the relative quadrature error of the residual loss using richer reference quadratures and bisection-based refinement. We then introduce a refresh-based training methodology that rebuilds the quadrature only when an online error indicator exceeds a prescribed threshold, balancing accuracy and computational cost. Numerical experiments on a range of benchmark problems show that the proposed approach narrows the gap between training and reference losses, uses quadrature points more efficiently and delivers strong approximation accuracy relative to non-adaptive quadrature strategies.
\end{abstract} 

\maketitle 

\section{Introduction}

In this article, we study how numerical quadrature affects the quality of
approximations of \acp{pde} obtained with fully connected, feed-forward neural
networks, i.e., multi-layer perceptrons. We focus on
strong-form residual minimisation methods, in which the \ac{nn} is
trained by minimising a residual-based loss. In particular, we consider the
deep least squares method~\cite{Dissanayake1994, DGM2018, DLS-Cai-2020}, whose loss
involves integrals of the strong residual over the physical domain. 

Early works
approximated these integrals using uniform collocation points
\cite{Lagaris1998}, while later \ac{mc} and \ac{qmc} integration
became standard in what came to be known as \acp{pinn}~\cite{PINNs2019}.
Although such approaches have been successfully applied to a wide range of
\acp{pde}~\cite{UrbanPINNOpt2025}, the quality of
the induced quadrature is often treated as a secondary concern.
The central issue is that, if the quadrature is too coarse or poorly
distributed, the optimiser may reduce the discrete loss while the underlying
continuous residual loss remains poorly approximated. This mismatch between training and true losses can lead to severe overfitting with respect to the quadrature, even when the optimiser appears to converge. 

Because \acp{nn}
are global, highly nonlinear approximators, constructing accurate quadratures
is non-trivial~\cite{Feischl2025nnhard}. Fine uniform or quasi-uniform composite
quadrature rules that accurately integrate all relevant realisations of a
\ac{nn} architecture are computationally impractical for reasonably
sized networks. This makes adaptive control of the quadrature a central component of the numerical method.

Several adaptive ideas have been proposed for neural \ac{pde} solvers,
including fixed-interval Monte Carlo refresh~\cite{Wang2023PINNsGuide} and 
mesh-partition-based approaches~\cite{DLS-Cai-2020, Liu-Cai-Ramani-2023}; see
also~\cite{Toscano2025adaptivity} for a variational viewpoint on
residual-driven adaptivity. Focusing on adaptive quadratures,~\cite{RiveraPINNQuadrature2022} proposed a
1D adaptive mesh-refinement algorithm based on local integration error.
That approach already illustrates the potential of
quadrature adaptation in 1D driven by local error, but it is limited to single-step refinement,
does not explicitly enforce quadrature error control before training resumes,
and does not include coarsening, which can be important for redistributing
rather than hierarchically accumulating points~\cite{AdaptiveFEINNs2024}.
An alternative line of work exploits the \ac{nn} structure itself to
achieve adaptivity~\cite{MagueresseBadiaAdaptive2024, LiuAdaptiveFA2022}. These approaches can yield accurate
integration and interpretability, but extracting a domain-aware partition
becomes computationally expensive for deep networks and complex architectures
\cite{PirateNets2024} and may not capture other coefficients and terms in the
loss.

The contributions of this work are threefold. First, we formulate a relative perturbation framework for residual-minimisation losses and derive a nonlinear Strang-type estimate that separates approximation, quadrature and optimisation errors. Second, motivated by this estimate, we propose an anisotropic $h$-adaptive composite quadrature strategy that explicitly controls an estimator of the residual-loss quadrature error for the current network realisation. In particular, we advocate a bisection-based, anisotropic $h$-adaptive quadrature~\cite{GenzMalik1980} that adapts to the current \ac{nn} realisation and aims to control the quadrature error in the residual loss throughout training. Third, we introduce a refresh-based training procedure in which the quadrature is rebuilt only when an online training/reference loss indicator exceeds a prescribed threshold.

In contrast to existing approaches, our focus is not only on improving quadrature accuracy locally, but on explicitly controlling the relative perturbation of the residual loss throughout training. This leads to a training-aware adaptive quadrature strategy, in which the quadrature is dynamically rebuilt based on a global error indicator tied to the optimisation process. The theoretical results justify the importance of controlling quadrature perturbations in residual-minimisation losses, while the numerical experiments show that the proposed adaptive strategy can substantially reduce training/reference loss mismatch and improve accuracy compared with standard fixed quadrature and sampling strategies.

The outline of the paper is as follows. 
In~\Cref{sec:abstract-formulation}, we establish an abstract framework
for residual minimisation and discuss representative residual losses arising in
neural \ac{pde} approximation. In~\Cref{sec:manifold-approximation}, we
formalise minimisation over a nonlinear manifold, identify the relative
quadrature perturbation and optimisation errors and derive a nonlinear
Strang-type estimate for the resulting approximation. 
We also provide a weaker result that can still apply when considering variational crimes that are commonplace in the \acp{pinn} literature. 
In~\Cref{sec:adaptive-quad}, we present an adaptive quadrature framework leading
to an anisotropic, bisection-based $h$-adaptive composite quadrature with
non-nested primal and reference rules. In~\Cref{sec:methodology}, we introduce
a refresh-based training strategy that monitors an online quadrature error
indicator and rebuilds the quadrature only when needed. Finally, in
\Cref{sec:numexps} we report numerical experiments on a range of \ac{pde}
benchmark problems from scientific machine learning and \ac{amr}, comparing the proposed
approach with uniform, \ac{mc} and \ac{qmc}-based quadratures, including \ac{lhc}
and Halton sampling.

\providecommand{\bbR}{\mathbb{R}}
\providecommand{\bbN}{\mathbb{N}}
\providecommand{\bbZ}{\mathbb{Z}}
\providecommand{\bbQ}{\mathbb{Q}}
\providecommand{\bbC}{\mathbb{C}}

\DeclarePairedDelimiter{\ip}{\langle}{\rangle}
\DeclarePairedDelimiter{\Norm}{\lVert}{\rVert}
\DeclarePairedDelimiter{\tnorm}{\lvert\!\lvert\!\lvert}{\rvert\!\rvert\!\rvert}

\providecommand{\vecx}{\mathbf{x}}
\providecommand{\vecn}{\mathbf{n}}
\providecommand{\vtheta}{\pmb{\theta}}
\providecommand{\vbeta}{\pmb{\beta}}

\providecommand{\calN}{\mathcal{N}}
\providecommand{\calM}{\mathcal{M}}
\providecommand{\calF}{\mathcal{F}}
\providecommand{\calT}{\mathcal{T}}
\providecommand{\calQ}{\mathcal{Q}}
\providecommand{\calE}{\mathcal{E}}
\providecommand{\calL}{\mathcal{L}}
\providecommand{\calJ}{\mathcal{J}}
\providecommand{\calR}{\mathcal{R}}

\providecommand{\scrJ}{\mathscr{J}} 
\providecommand{\scrL}{\mathscr{L}}
\providecommand{\scrE}{\mathscr{E}}

\newcommand{\op}[1]{\mathcal{#1}}

\section{Abstract formulation} \label{sec:abstract-formulation}

In this section, we introduce the abstract residual minimisation framework used
throughout the paper. The framework is based on minimising a residual norm of
the problem at hand. We then illustrate this setting with representative
examples arising in neural approximation of \acp{pde}.

\subsection{Residual minimisation framework}

Let $X$ and $Y$ be Banach spaces and $T:X \rightarrow Y$. We consider the
following problem: given $\ell \in Y$, find $u \in X$ such that $Tu = \ell$. The
problem can be restated as a minimisation problem: find
\begin{equation}\label{eq:abstract_problem}
  u \in \argmin_{v \in X} \scrJ(v).
\end{equation}
Here $\scrJ(v) \doteq \| T v - \ell \|_Y^p$, with $p \geq 1$. Under the
following assumptions, the problem is well-posed~\cite{GuermondLp2004}.
\begin{assumption}\label{assumption:well-posed}
  The operator $T$ is linear, bounded above, i.e.,
  \begin{equation}\label{eq:boundedness}
    \|Tu\|_Y \leq C_T \|u\|_X \quad \forall u \in X,
  \end{equation}
  for some constant $C_T > 0$, bounded below, i.e.,
  \begin{equation}\label{eq:coercivity}
    \alpha \|u\|_X \leq \|Tu\|_Y \quad \forall u \in X,
  \end{equation}
  for some constant $\alpha > 0$, and its adjoint $T' : Y' \rightarrow X'$ is injective. 
\end{assumption}
\begin{lemma}
  Under \Cref{assumption:well-posed}, the solution $u$ of (\ref{eq:abstract_problem}) is unique and satisfies $\alpha \|u\|_X \leq \|\ell\|_Y$.
\end{lemma}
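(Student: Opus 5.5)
The plan is to establish existence via surjectivity of $T$, uniqueness via injectivity, and the stability bound from the lower bound \eqref{eq:coercivity}; the problem \eqref{eq:abstract_problem} is then solved exactly by $T^{-1}\ell$. The lemma speaks of \emph{the} solution $u$ of \eqref{eq:abstract_problem} and bounds it by $\|\ell\|_Y$. This only makes sense if the minimum value of $\scrJ$ is zero, that is, if $Tu=\ell$ is solvable. So the main task is to show that $T$ is a bijection from $X$ onto $Y$. We will then identify the minimiser with $T^{-1}\ell$.

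\textbf{Range of $T$ (main obstacle).} First I show that $\operatorname{ran} T$ is closed. Take a sequence $Tu_n \to y$ in $Y$. By \eqref{eq:coercivity}, $\alpha\|u_n-u_m\|_X \le \|Tu_n - Tu_m\|_Y$, so $(u_n)$ is Cauchy in $X$. Since $X$ is Banach, $u_n \to u$, and boundedness \eqref{eq:boundedness} gives $Tu_n \to Tu$, hence $y = Tu \in \operatorname{ran}T$. Next I show that $\operatorname{ran}T$ is dense. If it were not, Hahn--Banach would give a nonzero $y' \in Y'$ vanishing on $\operatorname{ran}T$. That means $\langle T'y', v\rangle = \langle y', Tv\rangle = 0$ for all $v\in X$, so $T'y'=0$, which contradicts the injectivity of $T'$. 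A closed and dense range gives $\operatorname{ran} T = Y$. This is the one place where the adjoint hypothesis is needed. It is the step requiring care: closedness must come from the lower bound together with completeness of $X$, and density must come from the annihilator argument.

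\textbf{Existence, uniqueness and the bound.} By surjectivity there is $u\in X$ with $Tu=\ell$. Then $\scrJ(u)=0 \le \scrJ(v)$ for every $v$, so $u$ is a minimiser. Conversely, every minimiser $w$ satisfies $0 \le \scrJ(w)\le \scrJ(u) = 0$, so $Tw=\ell$. Then $T(w-u)=0$, and \eqref{eq:coercivity} gives $\alpha\|w-u\|_X \le 0$, so $w=u$; this proves uniqueness. Finally, applying \eqref{eq:coercivity} to $u$ gives $\alpha\|u\|_X \le \|Tu\|_Y = \|\ell\|_Y$. The value of $p\ge1$ plays no role, because the minimum value is zero and $t\mapsto t^p$ is strictly increasing on $[0,\infty)$.
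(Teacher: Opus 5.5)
Your proof is correct and follows the same route as the paper. Like the paper, you show that $T$ is a bijection from the lower bound \eqref{eq:coercivity} together with injectivity of $T'$, identify the minimiser with $T^{-1}\ell$, and read off $\alpha\|u\|_X\le\|\ell\|_Y$ from \eqref{eq:coercivity}. The only difference is that you prove the needed part of the closed-range theorem by hand (closedness via Cauchy sequences and completeness of $X$, density via Hahn--Banach) instead of citing the Banach closed-range and open mapping theorems; this also shows that the open mapping theorem is not needed, since the lower bound already controls $T^{-1}$.
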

\begin{proof}
  Uniqueness holds if and only if $T$ is bijective. By the Banach closed-range
  theorem and the open mapping theorem, $T$ is bijective if and only if it is
  bounded below and its adjoint is injective, which hold by \Cref{assumption:well-posed}. The bound by the data follows from
 ~\eqref{eq:coercivity} and the identity $Tu = \ell$.
  \end{proof} 
  
  Under \Cref{assumption:well-posed}, if $Y$ is a Hilbert space, then the problem can be restated in a least-squares form:
  \[
    u \in X \ : \ (Tu,Tv)_Y = (\ell,Tv)_Y \quad \forall v \in X.
  \]
  This problem is the first-order optimality condition for the minimisation of $\scrJ^{2/p}$, which is differentiable (and strictly convex). Since $\scrJ^{2/p}$ and $\scrJ$ share the same minimiser (the map $t \mapsto t^{2/p}$ is strictly increasing for $p>0$), the two problems are equivalent.  
  
We now illustrate the abstract setting with two representative \acp{pde}. Let
$\Omega \subset \mathbb{R}^d$ be a bounded domain with Lipschitz boundary
$\partial \Omega$. We consider the Poisson problem and the linear transport
problem on $\Omega$. 

\paragraph{\underline{\textbf{Poisson problem}}} Consider the Poisson problem
$-\Delta u = f_\Omega$ in $\Omega$ with Dirichlet boundary conditions
$u = g$ on $\partial \Omega$. This problem admits a unique solution for
$X = H^1(\Omega)$ and
$Y = H^{1}(\Omega)' \times H^{1/2}(\partial \Omega)$. We set
$\ell \doteq (f_\Omega, g)$ and define the operator
$T u \doteq (-\Delta u, u|_{\partial \Omega})$. The loss functional is then
given by:
\begin{equation}
  \scrJ(v)^{2/p} = \| T v - \ell \|_Y^{2} = \| -\Delta v - f_\Omega \|_{H^{1}(\Omega)'}^2 + \| v|_{\partial \Omega} - g \|_{H^{1/2}(\partial \Omega)}^2.
\end{equation}
Assuming elliptic regularity, the problem admits a unique solution in the
smoother functional setting
$X = H^2(\Omega)$ and
$Y = L^2(\Omega) \times H^{3/2}(\partial \Omega)$. In that case, the loss
functional becomes:
\begin{equation}
  \scrJ(v)^{2/p} = \int_{\Omega} (-\Delta v - f_\Omega)^2 d\Omega + \gamma \| v|_{\partial \Omega} - g\|_{H^{3/2}(\partial \Omega)}^2.
\end{equation}
where $\gamma > 0$ is a numerical parameter.
We note that the first loss functional involves dual norms, which are hard to
compute numerically. This has motivated the use of the ill-posed settings
discussed in~\Cref{sub:variational_crimes}. 

\paragraph{\underline{\textbf{Linear transport problem}}} 
Consider a pure transport problem
\[
{\pmb{\beta} \cdot \nabla} u = f_\Omega \quad \text{in } \Omega,
\] 
where $\pmb{\beta}$ is a solenoidal vector field. We impose inflow boundary conditions on 
$$\Gamma_- = \{ x \in \partial \Omega : \pmb{\beta} \cdot \mathbf{n}(x) <0 \},$$ 
where $\mathbf{n}$ is the outward unit normal to $\partial \Omega$. We set 
$$X = L^2_{\pmb{\beta}}(\Omega) \doteq \left\{ u \in L^2(\Omega) \, : \, \pmb{\beta}\cdot \nabla u \in L^2(\Omega)\right\}$$ 
and $Y = L^2(\Omega) \times L^2_{|\pmb{\beta}\cdot \pmb{n}|}(\partial \Omega)$, where $L^2_{|\pmb{\beta}\cdot \pmb{n}|}(\Gamma_-)$ is the weighted $L^2$ space on the inflow boundary with respect to $| \pmb{\beta} \cdot \pmb{n} |$. The operator $T$ is defined as $(\pmb{\beta} \cdot \nabla u, u|_{\Gamma_-})$ and $f$ is defined as $(f_\Omega, g)$, where $f_\Omega$ is the source term and $g$ is the inflow boundary condition. As a result, the loss functional is given by:
\begin{equation}
  \scrJ(v)^{2/p} = \int_{\Omega} (\pmb{\beta} \cdot \nabla v - f_\Omega)^2 d\Omega + \gamma \int_{\Gamma_-} (v|_{\Gamma_-} - g)^2 |\pmb{\beta} \cdot \mathbf{n}| d\Gamma_-.
\end{equation}
\section{Functional manifold minimisation} \label{sec:manifold-approximation}

In this section, we analyse the minimisation of a nonnegative loss functional
over a manifold of functions. Since the exact computation of the functional may
not be possible, we study the effect of the error introduced in its
evaluation. Our focus is the relative perturbation setting relevant to the
residual minimisation problems considered in this paper, together with the
error introduced by the optimiser when approximating the minimiser.

Let us consider a family of maps $\mathcal{R}_{\#}: \mathbb{P}_{\#} \rightarrow
\mathcal{N}_{\#} \subset X$, where $\mathbb{P}_{\#}$ is a finite-dimensional
Euclidean space and $\mathcal{N}_{\#}$ is a manifold embedded in $X$,
parameterised by hyper-parameters $\# \in \mathcal{H}$. $\mathcal{R}_{\#}$ can
be the \ac{nn} realisation map that assigns to each parameter vector
$\pmb{\theta} \in \mathbb{P}_{\#}$ a function $\mathcal{R}_{\#}(\pmb{\theta})
\in \mathcal{N}_{\#} \subset X$ and $\mathcal{H}$ can include standard
hyper-parameters that encode the \ac{nn} architecture, such as the
number of layers, the number of neurons per layer, the activation function,
etc. The goal is to minimise a nonnegative loss functional $\scrJ$ over the
manifold $\mathcal{N}_{\#}$. In practice, $\mathcal{N}_{\#}$ is neither convex
nor closed in $X$ and uniqueness does not hold in general; this is the case
when $\mathcal{N}_{\#}$ is a \ac{nn}. Instead, we define the argmin
space $\mathcal{M}_{\#}(\scrJ)$ of minimisers of $\scrJ$ over
$\operatorname{cl}(\mathcal{N}_{\#})$:  
\[
  \mathcal{M}_{\#}(\scrJ) \;=\; \argmin_{u\in\operatorname{cl}(\mathcal{N}_{\#})} \scrJ(u).
\]
We have:
\begin{equation}\label{eq:best-approx-error}
  \scrJ(v) \leq \inf_{w \in \mathcal{N}_{\#}} \scrJ(w),
  \qquad \forall v \in \mathcal{M}_{\#}(\scrJ).
\end{equation}
In this section, we treat $\scrJ$ as a general loss functional. Later, we will
particularise the discussion to the case in which $\scrJ$ is some power of a
norm of the residual of the problem. In practice, the exact
evaluation of $\scrJ$ is often unavailable because the loss contains integrals
or dual norms that must be approximated numerically. In the present work, the
main source of error is the replacement of these integrals by numerical
quadrature, which leads to a discrete functional $\tilde{\scrJ}$. Our focus
below is therefore the case in which the functional is approximated and the
optimisation algorithm attains the minimum only up to a prescribed
tolerance.

\subsection{Relative perturbation error} \label{subsec:rel-perturbation-error}

For semi-positive definite functionals, such as those arising in residual
minimisation approaches, the first class of errors we consider is the inexact
evaluation of the loss functional $\scrJ$. In our setting, these
perturbations arise mainly from approximating the integrals in the loss by
quadrature rules and, when relevant, from inexact evaluation of dual norms.
They can be quantified by a relative error,
\begin{equation}\label{eq:rel_int_error}
  |\scrJ(v) - \tilde{\scrJ}(v)| \leq \varepsilon_{\mathrm{rel}} \scrJ(v), %\quad \forall v \in \mathcal{N}_{\#},
  \tag{A1}
\end{equation}
where $\varepsilon_{\mathrm{rel}}$ quantifies the relative error introduced by
the approximated functional. In practice, enforcing~\eqref{eq:rel_int_error} uniformly over the entire manifold $\mathcal{N}_{\#}$
 is not feasible. Instead, the adaptive quadrature strategy introduced later aims to control this perturbation condition locally at the network realisations encountered during training.

\subsection{Optimisation error}

Assuming that we replace the minimisation of the exact functional $\scrJ$ by
the minimisation of an approximated functional $\tilde{\scrJ}$, the second
error that we face in practice is the one related to the (non-convex)
optimisation of the functional $\tilde{\scrJ}$ over the manifold
$\mathcal{N}_{\#}$. In general, we cannot guarantee that the optimisation
algorithm converges to a global minimiser of $\tilde{\scrJ}$ over
$\mathcal{N}_{\#}$, due to the non-convex loss landscape exhibiting multiple
local minima. Instead, we denote by $\tilde{u}_{\#} \in \mathcal{N}_{\#}$ the
output of the optimisation algorithm. Let $\mathcal{M}_{\#}(\tilde{\scrJ})$ be
the set of minimisers of $\tilde{\scrJ}$ over
$\operatorname{cl}(\mathcal{N}_{\#})$. We assume that:
\begin{equation}\label{eq:optimisation-error}
  \min_{v \in \mathcal{M}_{\#}(\tilde{\scrJ})} | \tilde{\scrJ}(\tilde{u}_{\#}) - \tilde{\scrJ}(v) | \leq \varepsilon_{\mathrm{opt}}, \tag{A2}
\end{equation}
where $\varepsilon_{\mathrm{opt}}$ quantifies the optimisation error, i.e., how
close the output of the optimisation algorithm is to a global minimiser of
$\tilde{\scrJ}$ over $\operatorname{cl}(\mathcal{N}_{\#})$. 

When the perturbation in $\tilde{\scrJ}$ relative to the exact loss
functional $\scrJ$ is measured as in
\Cref{subsec:rel-perturbation-error}, we can establish the following bound on
the value of the functional at the output of the optimisation algorithm.
\begin{lemma} \label{lemma:nonlinear-strang-rel-pert}
  Let us assume that the minimisation problem (\ref{eq:abstract_problem}) admits a unique solution $u \in X$. Let $\tilde{u}_{\#}$ be the output of the optimisation algorithm. If (\ref{eq:rel_int_error}) and (\ref{eq:optimisation-error}) hold, we have:
  \[
    \scrJ(\tilde{u}_{\#}) \leq  \frac{1}{1 - \varepsilon_{\mathrm{rel}}} \left((1 + \varepsilon_{\mathrm{rel}}) \inf_{w \in \mathcal{N}_{\#}} \scrJ(w) + \varepsilon_{\mathrm{opt}} \right).
  \]
\end{lemma}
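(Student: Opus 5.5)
The plan is a standard Strang-type chain of inequalities. We pass from $\scrJ$ to $\tilde{\scrJ}$ at the optimiser output, use the optimisation error to compare with a discrete minimiser, and then pass back to $\scrJ$ at an arbitrary competitor $w \in \mathcal{N}_{\#}$. Throughout we assume $\varepsilon_{\mathrm{rel}} < 1$; otherwise the bound is vacuous. We also assume that (\ref{eq:rel_int_error}) holds at the functions where it is invoked, namely at $\tilde{u}_{\#}$ and at every $w \in \mathcal{N}_{\#}$.

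\textbf{Step 1 (lower bound at the output).} From (\ref{eq:rel_int_error}) applied at $v = \tilde{u}_{\#}$ we get $(1-\varepsilon_{\mathrm{rel}})\scrJ(\tilde{u}_{\#}) \leq \tilde{\scrJ}(\tilde{u}_{\#})$.

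\textbf{Step 2 (optimisation error).} By (\ref{eq:optimisation-error}) there is a $v^\ast \in \mathcal{M}_{\#}(\tilde{\scrJ})$ with $\tilde{\scrJ}(\tilde{u}_{\#}) \leq \tilde{\scrJ}(v^\ast) + \varepsilon_{\mathrm{opt}}$. Since $v^\ast$ minimises $\tilde{\scrJ}$ over $\operatorname{cl}(\mathcal{N}_{\#}) \supset \mathcal{N}_{\#}$, the analogue of (\ref{eq:best-approx-error}) for $\tilde{\scrJ}$ gives $\tilde{\scrJ}(v^\ast) \leq \tilde{\scrJ}(w)$ for every $w \in \mathcal{N}_{\#}$. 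This step needs no continuity of $\tilde{\scrJ}$: it is immediate from the definition of the argmin over the larger set. If $\mathcal{M}_{\#}(\tilde{\scrJ})$ were empty, the minimum in (\ref{eq:optimisation-error}) would be meaningless. We therefore read (\ref{eq:optimisation-error}) as implicitly asserting that a minimiser exists, or equivalently we replace $\tilde{\scrJ}(v^\ast)$ by $\inf_{\mathcal{N}_{\#}}\tilde{\scrJ}$ plus an arbitrarily small slack.

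\textbf{Step 3 (upper bound at competitors).} Applying (\ref{eq:rel_int_error}) at $w$ gives $\tilde{\scrJ}(w) \leq (1+\varepsilon_{\mathrm{rel}})\scrJ(w)$. Chaining Steps 1–3 yields
\[
(1-\varepsilon_{\mathrm{rel}})\scrJ(\tilde{u}_{\#}) \leq (1+\varepsilon_{\mathrm{rel}})\scrJ(w) + \varepsilon_{\mathrm{opt}} \qquad \forall w \in \mathcal{N}_{\#}.
\]
Taking the infimum over $w$ and dividing by $1-\varepsilon_{\mathrm{rel}} > 0$ gives the claim. The assumption that (\ref{eq:abstract_problem}) has a unique solution is not really used beyond ensuring that $\scrJ$ is a well-defined nonnegative functional.

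The only delicate point is the bookkeeping of where (\ref{eq:rel_int_error}) is required. It must hold on all of $\mathcal{N}_{\#}$, or at least on a minimising sequence for $\scrJ$, together with $\tilde{u}_{\#}$. It is not needed at $v^\ast$, which may lie in the closure. Avoiding any evaluation of the relative bound at $v^\ast$ is exactly why the chain compares $\tilde{\scrJ}(v^\ast)$ only with $\tilde{\scrJ}(w)$. I expect this to be the main thing to state carefully; the algebra itself is routine.
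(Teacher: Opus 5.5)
Your proof is correct, and it departs from the paper's at exactly the step you flagged. The paper applies the upper bound $\tilde{\scrJ}\le(1+\varepsilon_{\mathrm{rel}})\scrJ$ at the discrete minimiser $v\in\mathcal{M}_{\#}(\tilde{\scrJ})$ itself. This gives $(1-\varepsilon_{\mathrm{rel}})\scrJ(\tilde{u}_{\#})\le(1+\varepsilon_{\mathrm{rel}})\scrJ(v)+\varepsilon_{\mathrm{opt}}$, and the paper then invokes \eqref{eq:best-approx-error} to replace $\scrJ(v)$ by $\inf_{w\in\mathcal{N}_{\#}}\scrJ(w)$.

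Read literally, that last step does not follow, for two reasons. First, \eqref{eq:best-approx-error} concerns minimisers of $\scrJ$, not of $\tilde{\scrJ}$. Using \eqref{eq:rel_int_error} at $v$ one only gets $\scrJ(v)\le\frac{1+\varepsilon_{\mathrm{rel}}}{1-\varepsilon_{\mathrm{rel}}}\inf_{\mathcal{N}_{\#}}\scrJ$, which worsens the constant. Second, the two-sided bounds are stated on $\mathcal{N}_{\#}$, whereas $v$ may lie only in $\operatorname{cl}(\mathcal{N}_{\#})$.

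Your ordering removes both issues. You use the minimality of $v^\ast$ for $\tilde{\scrJ}$ to pass to an arbitrary competitor $w\in\mathcal{N}_{\#}$ before applying \eqref{eq:rel_int_error} at $w$. This yields exactly the stated constant and needs \eqref{eq:rel_int_error} only at $\tilde{u}_{\#}$ and on $\mathcal{N}_{\#}$ (or along a minimising sequence).

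The paper's chain could alternatively be repaired by comparing with some $w^\ast\in\mathcal{M}_{\#}(\scrJ)$. That repair additionally requires $\mathcal{M}_{\#}(\scrJ)\neq\emptyset$ and the perturbation bound at a point of the closure, so your route is the more economical one. Your observation that uniqueness of $u$ is never used is also accurate; the paper's proof does not use it either.
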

\begin{proof}
First, we readily check that (\ref{eq:rel_int_error}) implies:
\begin{equation}\label{eq:rel_int_error_bounds}
  (1 - \varepsilon_{\mathrm{rel}}) \scrJ(v) \leq \tilde{\scrJ}(v) \leq (1 + \varepsilon_{\mathrm{rel}}) \scrJ(v), \quad \forall v \in \mathcal{N}_{\#}.
\end{equation}
Using the definition of the training error in (\ref{eq:optimisation-error}) and $\mathcal{M}_{\#}(\tilde \scrJ)$ and the upper bound in (\ref{eq:rel_int_error_bounds}), we obtain:
\begin{equation} \label{eq:rel-pert-abstract-strang}
  \tilde{\scrJ}(\tilde{u}_{\#})
  \leq \tilde{\scrJ}(v) + \varepsilon_{\mathrm{opt}} 
  \leq (1 + \varepsilon_{\mathrm{rel}}) \scrJ(v) + \varepsilon_{\mathrm{opt}}, 
\end{equation}
for some $v \in \mathcal{M}_{\#}(\tilde{\scrJ})$ that attains the minimum in
(\ref{eq:optimisation-error}). Invoking (\ref{eq:best-approx-error}) and the
lower bound in (\ref{eq:rel_int_error_bounds}) on the LHS, we prove the result.
\end{proof}
The estimate is meaningful only when $\varepsilon_{\mathrm{rel}} < 1$, which corresponds to a regime in which the quadrature-induced perturbation does not dominate the loss functional.
\subsection{Nonlinear Strang's lemma} \label{subsec:nonlinear-strang-lemma}

We now particularise the abstract result above to the residual minimisation
setting. Here, \emph{nonlinear} refers to approximation on a
manifold $\mathcal{N}_{\#}$ rather than on a vector space. This yields a
nonlinear Strang-type estimate that separates approximation, quadrature and
optimisation errors. We then discuss a related interpretation for \ac{pinn}-like
methods based on variational crimes.

The following nonlinear version of Strang's Lemma applies to the quadrature
approximation of the residual functional introduced above. We recall the
notation: $u$ is the exact solution of the problem
(\ref{eq:abstract_problem}) and $\tilde{u}_{\#}$ is the output of the
optimisation algorithm. 
\begin{corollary} \label{corollary:nonlinear-strang-rel}
  Let us assume that the continuity (\ref{eq:boundedness}) and coercivity
  (\ref{eq:coercivity}) of $T$ hold under $X$, and the problem admits a unique
  solution $u \in X$. If the approximate loss functional ($\tilde{\scrJ}$)
  obeys assumption (\ref{eq:rel_int_error}) and the output $\tilde{u}_{\#}$ of
  the optimisation algorithm satisfies (\ref{eq:optimisation-error}), then we have:
\begin{align}
  \| \tilde{u}_{\#} - u\|_X  
  & \leq \frac{1}{\alpha} \left(\frac{1}{\left(1 - \varepsilon_{\mathrm{rel}}\right)} \left( \left(1 + \varepsilon_{\mathrm{rel}}\right) \inf_{w \in \operatorname{cl}(\mathcal{N}_{\#})} \scrJ(w) + \varepsilon_{\mathrm{opt}} \right)\right)^{1/p} \\ 
  & \leq  \frac{1}{\alpha} \left(\frac{1}{\left(1 - \varepsilon_{\mathrm{rel}}\right)} \left( \left(C_T\right)^p \left(1 + \varepsilon_{\mathrm{rel}}\right) \inf_{w \in \operatorname{cl}(\mathcal{N}_{\#})} \|u - w\|^p_X + \varepsilon_{\mathrm{opt}} \right)\right)^{1/p}.
\end{align} 
\end{corollary}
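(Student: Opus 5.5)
The plan is to combine the coercivity bound (\ref{eq:coercivity}) with \Cref{lemma:nonlinear-strang-rel-pert}, and then bound the best-approximation residual by continuity (\ref{eq:boundedness}).

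\emph{Step 1.} Since $Tu=\ell$ and $T$ is linear, for any $v\in X$ we have $Tv-\ell = T(v-u)$, hence $\scrJ(v)=\|T(v-u)\|_Y^p$. Applying (\ref{eq:coercivity}) to $v-u$ gives
\[
\alpha^p \|\tilde u_{\#}-u\|_X^p \le \|T(\tilde u_{\#}-u)\|_Y^p = \scrJ(\tilde u_{\#}).
\]

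\emph{Step 2.} Insert the bound of \Cref{lemma:nonlinear-strang-rel-pert} on $\scrJ(\tilde u_{\#})$ and take $p$-th roots; the map $t\mapsto t^{1/p}$ is monotone on $[0,\infty)$, and we assume $\varepsilon_{\mathrm{rel}}<1$ so that the right-hand side is a nonnegative finite quantity. Then divide by $\alpha$.

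\emph{Step 3.} The lemma is stated with the infimum over $\mathcal{N}_{\#}$, whereas the corollary uses $\operatorname{cl}(\mathcal{N}_{\#})$. These two infima coincide. Indeed, $\scrJ$ is continuous on $X$, being the composition of the bounded linear map $T$ (by (\ref{eq:boundedness})), the norm and $t\mapsto t^p$. Hence for any $w\in\operatorname{cl}(\mathcal{N}_{\#})$ there is a sequence $w_n\in\mathcal{N}_{\#}$ with $\scrJ(w_n)\to\scrJ(w)$, which gives $\inf_{\mathcal{N}_{\#}}\scrJ\le\inf_{\operatorname{cl}(\mathcal{N}_{\#})}\scrJ$. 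The reverse inequality is trivial because $\mathcal{N}_{\#}\subset\operatorname{cl}(\mathcal{N}_{\#})$. This yields the first inequality of the corollary.

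\emph{Step 4.} For the second inequality, bound $\scrJ(w)=\|T(w-u)\|_Y^p\le C_T^p\|u-w\|_X^p$ by (\ref{eq:boundedness}), then take the infimum over $w\in\operatorname{cl}(\mathcal{N}_{\#})$. Since the factor $(1+\varepsilon_{\mathrm{rel}})/(1-\varepsilon_{\mathrm{rel}})$ is positive, monotonicity carries the bound through the $1/p$ power.

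The main obstacle is the bookkeeping in Step 3 on the closure versus the manifold, together with the implicit requirement $\varepsilon_{\mathrm{rel}}<1$ for the estimate to make sense. There is also a subtlety in the lemma: (\ref{eq:best-approx-error}) is applied to a minimiser of $\tilde{\scrJ}$ rather than of $\scrJ$. If this needs justifying, I would instead bound $\tilde{\scrJ}(v)\le\tilde{\scrJ}(w)\le(1+\varepsilon_{\mathrm{rel}})\scrJ(w)$ for arbitrary $w\in\mathcal{N}_{\#}$ and then take the infimum. That reproduces the same constant and requires (\ref{eq:rel_int_error}) only on $\mathcal{N}_{\#}$ and at $\tilde u_{\#}$.
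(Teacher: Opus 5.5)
Your proposal is correct and follows the paper's proof: start from \Cref{lemma:nonlinear-strang-rel-pert}, write $\scrJ(v)=\|T(v-u)\|_Y^p$ using $Tu=\ell$, then apply coercivity on the left and continuity on the right. The extra bookkeeping you add is sound and covers steps the paper glosses over. It consists of showing that the infima over $\mathcal{N}_{\#}$ and $\operatorname{cl}(\mathcal{N}_{\#})$ coincide by continuity of $\scrJ$, and of repairing the lemma via $\tilde{\scrJ}(v)\le\tilde{\scrJ}(w)\le(1+\varepsilon_{\mathrm{rel}})\scrJ(w)$ for $w\in\mathcal{N}_{\#}$, which replaces the paper's appeal to \eqref{eq:best-approx-error}; that appeal concerns minimisers of $\scrJ$, not of $\tilde{\scrJ}$.
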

\begin{proof}
  The setting and assumptions are the same as in
 ~\Cref{lemma:nonlinear-strang-rel-pert}, so we start from the assertion of
  that lemma. Since $\scrJ(v) = \norm{T(v -u)}^p_Y$ because $T u = \ell$, applying
  the coercivity estimate~\eqref{eq:coercivity} on the left-hand side yields
  the first bound. Applying the continuity estimate~\eqref{eq:boundedness} on
  the right-hand side then gives the final bound.
\end{proof}
The estimate shows that, in stable residual-minimisation settings, the relative quadrature error enters the bound through multiplicative factors that scale the best-approximation error, rather than as a purely additive perturbation. 
In particular, poor quadrature can significantly amplify approximation errors by distorting the loss functional and therefore the optimisation landscape on which the \ac{nn} is trained. In addition, the optimisation error contributes additively. In particular, this motivates controlling the quadrature error in a relative sense, as in~\eqref{eq:rel_int_error}, so that the distortion of the loss functional remains proportional to its magnitude during training.

\subsection{PINN-like methods}\label{sub:variational_crimes}

\ac{pinn}-like methods often use the residual-minimisation framework together with
variational crimes to avoid the explicit computation of dual norms in the loss
functional. A common approach is to replace $Y$ by a larger space
$\overline{Y}$ endowed with a weaker norm $\|\cdot\|_{\overline{Y}}$, and to
view the operator as $T: X \rightarrow \overline{Y}$. In this relaxed setting,
one cannot in general prove~\eqref{eq:boundedness} and~\eqref{eq:coercivity}
with the weaker norm. Nevertheless, one can define the weaker (semi-)norm
$\| \cdot \|_{0,X} \doteq \| T \cdot \|_{\overline{Y}}$, for which analogous
bounds are immediate. If the data in~\eqref{eq:abstract_problem} satisfy
$\ell \in \overline{Y}$, then the solution $u \in X$ of the original problem is
also a minimiser of the relaxed functional
$\| T v - \ell \|^p_{\overline{Y}}$ over $X$, although uniqueness may fail.

Based on the assumption (\ref{eq:rel_int_error}) and
(\ref{eq:optimisation-error}), beginning with
\Cref{lemma:nonlinear-strang-rel-pert}  and proceeding similarly to the proof
of the nonlinear Strang's lemma, in~\Cref{corollary:nonlinear-strang-rel}, we
obtain the following corollary.
\begin{corollary}
  Let $u$ be the solution of~\eqref{eq:abstract_problem} for data
  $\ell \in \overline{Y}$. Assume that $u$ is also the unique minimiser of the
  relaxed functional $\| T v - \ell \|_{\overline{Y}}$ over $X$. Let
  $\tilde{u}_{\#}$ be the output of the optimisation algorithm. Then we have:
  \[
    \| T\tilde{u}_{\#} - Tu\|^p_{\overline{Y}} \leq \frac{1}{1 - \varepsilon_{\mathrm{rel}}} \left( \left(1 + \varepsilon_{\mathrm{rel}}\right) \inf_{w \in X} \|T u - T w\|^p_{\overline{Y}} + \varepsilon_{\mathrm{opt}} \right).
  \]
\end{corollary}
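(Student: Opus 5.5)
We apply \Cref{lemma:nonlinear-strang-rel-pert} with the relaxed loss $\overline{\scrJ}(v) \doteq \| T v - \ell \|^p_{\overline{Y}}$ in place of $\scrJ$, and with its quadrature approximation $\tilde{\scrJ}$ assumed to satisfy (\ref{eq:rel_int_error}) relative to $\overline{\scrJ}$. The proof of that lemma uses only three ingredients: the two-sided bound (\ref{eq:rel_int_error_bounds}), the optimisation assumption (\ref{eq:optimisation-error}), and the best-approximation property (\ref{eq:best-approx-error}). None of these depends on the stability of $T$. The uniqueness hypothesis of the lemma is supplied here by the assumption that $u$ is the unique minimiser of the relaxed functional over $X$. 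We therefore first obtain
\[
  \overline{\scrJ}(\tilde{u}_{\#}) \leq \frac{1}{1-\varepsilon_{\mathrm{rel}}}\left((1+\varepsilon_{\mathrm{rel}}) \inf_{w\in\mathcal{N}_{\#}} \overline{\scrJ}(w) + \varepsilon_{\mathrm{opt}}\right).
\]

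Next we rewrite both sides in terms of $T$ alone. Since $u$ solves (\ref{eq:abstract_problem}) with $\ell \in \overline{Y}$, we have $Tu = \ell$ as elements of $\overline{Y}$. Hence $\overline{\scrJ}(v) = \|Tv - Tu\|^p_{\overline{Y}}$ for every $v$, which turns the left-hand side into $\|T\tilde{u}_{\#} - Tu\|^p_{\overline{Y}}$ and the infimum into $\inf_{w} \|Tu - Tw\|^p_{\overline{Y}}$. Unlike in \Cref{corollary:nonlinear-strang-rel}, we do not apply (\ref{eq:coercivity}) or (\ref{eq:boundedness}), because they are unavailable in the weaker norm. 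The estimate is stated directly in the seminorm $\|\cdot\|_{0,X} = \|T\cdot\|_{\overline{Y}}$, for which those bounds hold trivially with constants equal to one.

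The main obstacle is the infimum set: the lemma produces an infimum over $\mathcal{N}_{\#}$, whereas the statement has $\inf_{w\in X}$. Since $\mathcal{N}_{\#}\subset X$, we have $\inf_X \leq \inf_{\mathcal{N}_{\#}}$, so this inequality runs the wrong way for simply enlarging the set. There are two possible readings. If the statement intends $w$ to range over the approximation manifold, as in the previous corollary, the proof is complete. If it genuinely means $X$, the infimum over $X$ is zero (attained at $w=u$), and the claim becomes $\overline{\scrJ}(\tilde{u}_{\#}) \leq \varepsilon_{\mathrm{opt}}/(1-\varepsilon_{\mathrm{rel}})$. That requires $\tilde{\scrJ}$ to vanish at its minimisers, which holds only when $u \in \operatorname{cl}(\mathcal{N}_{\#})$. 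I would therefore state and prove the bound with $\inf_{w\in\mathcal{N}_{\#}}$, or equivalently $\operatorname{cl}(\mathcal{N}_{\#})$ by continuity of $w\mapsto\|Tw-\ell\|_{\overline{Y}}$, and note that it reduces to the displayed form when $u$ lies in $\operatorname{cl}(\mathcal{N}_{\#})$. I would also check that (\ref{eq:rel_int_error}) is imposed with respect to the relaxed functional $\overline{\scrJ}$ and not the original $\scrJ$.
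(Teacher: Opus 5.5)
Your proof is correct and follows the paper's route. You apply \Cref{lemma:nonlinear-strang-rel-pert} to the relaxed functional $\|Tv-\ell\|^p_{\overline{Y}}$, with \eqref{eq:rel_int_error} imposed relative to that functional, and rewrite it as $\|Tv-Tu\|^p_{\overline{Y}}$ via $Tu=\ell$, without invoking \eqref{eq:coercivity} or \eqref{eq:boundedness}. Your diagnosis of the infimum is also right. The argument yields the infimum over $\mathcal{N}_{\#}$ (equivalently over $\operatorname{cl}(\mathcal{N}_{\#})$, as in \Cref{corollary:nonlinear-strang-rel}). By contrast, $\inf_{w\in X}$ is trivially zero, and with $\varepsilon_{\mathrm{opt}}=0$ it would force $\tilde{u}_{\#}=u$, which is false whenever $u\notin\operatorname{cl}(\mathcal{N}_{\#})$. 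So the $X$ in the displayed bound should be read as a typo for $\mathcal{N}_{\#}$.
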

\begin{example}[Poisson problem]\label{ex:poisson-relaxed-loss} Let us consider the statement of the Poisson
problem introduced above. Consider the space $\overline{Y} = L^2(\Omega) \times
L^2(\partial \Omega)$. The relaxed functional (with $p = 2$) is given by:
  \[
   \int_{\Omega} (\Delta v + f_\Omega)^2 d\Omega + \gamma \int_{\partial \Omega} (v - g)^2 ds, 
  \]
which is easy to compute using numerical integration. However, one cannot bound $\|u\|_{H^2(\Omega)}$ by $\|\Delta u\|_{L^2(\Omega)} + \| u\|_{L^2(\partial \Omega)}$. Despite the ill-posedness of the problem, this is the common setting in \ac{pinn} formulations.
\end{example}

\section{Adaptive quadratures}\label{sec:adaptive-quad}

In this section, we discuss an effective practical strategy for satisfying
\eqref{eq:rel_int_error}. More precisely, given the perturbation assumption
\eqref{eq:rel_int_error} introduced in~\Cref{subsec:rel-perturbation-error},
our goal is to formulate a local quadrature-estimator condition, denoted below
by~\eqref{eq:local_quad_reliability}, from which the relative perturbation
bound~\eqref{eq:rel_int_error} for the quadrature-based approximation
$\tilde{\scrJ}$ of the exact loss functional $\scrJ$ follows for the current
\ac{nn} realisation during training. For standard numerical methods
posed on vector spaces, e.g., finite element methods, one can often construct a
fixed quadrature rule with sufficiently high polynomial accuracy. In those
settings, the relevant integrands involve piecewise polynomials and known
analytical functions such as forcing terms, boundary conditions and physical
coefficients.

For discretisation on nonlinear manifolds, such an approach is no longer
feasible. Different neural network realisations can exhibit different
localised features depending on the parameters. As a result, it is
impractical to construct a fixed quadrature rule that satisfies
(\ref{eq:rel_int_error}) uniformly over all relevant realisations of the
\ac{nn}, especially for deep architectures with many parameters. This
creates a form of overfitting in which the optimisation may find a solution
$\tilde{u}_{\#}$ that minimises the approximated functional $\tilde \scrJ$
while $\scrJ(\tilde{u}_{\#})$ remains large. Accordingly, we consider an
adaptive quadrature rule that depends on the current realisation of the neural
network. More precisely, we define
$\tilde{\scrJ}$ by replacing the exact domain and boundary integrals in
$\scrJ$ with composite numerical quadrature. Hence, for each
$v \in \mathcal{N}_{\#}$, the quadrature used to evaluate $\tilde{\scrJ}(v)$
is adapted to that current realisation rather than chosen uniformly over the
whole manifold $\mathcal{N}_{\#}$, which would be prohibitively expensive or infeasible.

The key idea is to pair a primal quadrature rule with a richer quadrature that
serves as a reference and yields cell-wise error estimates. This reference
quadrature plays a role analogous to a validation set in machine learning,
while also guiding local improvements to the training quadrature itself.
Unlike standard machine learning settings, we can, in principle, draw
arbitrarily many quadrature points, limited only by computational cost.
The local cell-wise estimates are then aggregated into a global
indicator controlling the perturbation of each integral contribution to the
loss. This local-to-global construction is what allows the adaptive procedure
to target assumption~\eqref{eq:rel_int_error} in a computable way.

As a model case, let us consider a term of $\scrJ$ that can be written as an
integral over a domain $\omega$ raised to some power. We denote this
contribution by $\scrJ_\omega(v)$ for $v \in \mathcal{N}_{\#}$ and write
$I_\omega(f) \doteq \int_{\omega}^{} f$ for a nonnegative integrand
$f(v) \in L^{1}(\omega)$, and $\scrJ_\omega(v) = I_\omega(f)^{1/p}$ for
$p \geq 1$.

\subsection{Adaptive composite quadratures}\label{subsec:anisotropic-adaptive-quadratures}
We consider the automatic generation of an anisotropic $h$-adaptive composite quadrature for the computation of $I(f)$. We start with a coarse partition $\mathcal{T}$ of $\omega$, so that
\[
I(f) = \sum_{K \in \mathcal{T}} I_K(f),
\qquad
I_K(f) \doteq \int_K f(x).
\]
Let $\widehat K$ 
be a fixed reference element and let
  $F_K : \widehat K \to K \subset \mathbb R^d$ be a $C^{p+1}$ diffeomorphism
  onto the physical element $K \in \mathcal{T}$, with $J_K(\hat x) \doteq \det DF_K(\hat x)$.
  Let $\widehat Q$ be an operator that approximates the integral of a function on $\widehat K$ and is exact on
  $\mathbb P_p(\widehat K)$, and define the quadrature approximation on $K$ via the pullback of the integral to the reference element $\widehat K$ as follows:
  \begin{equation}\label{eq:def-QK}
    Q_K(f)
    \doteq \widehat Q\bigl( f\circ F_K \, J_K \bigr)
    \qquad \forall f \in L^1(K).
  \end{equation}
This defines an approximation of the integral $I_K(f)$. Classical error
estimates generally assume standard shape-regularity assumptions (see, e.g.,
\cite[Lemma 30.9]{Ern-Guermond-FE-II}); however, weaker assumptions regarding
shape regularity may suffice given the anisotropic interpolation error
estimates (cf.~\cite{Acosta2006interpolation}). The following result provides
an estimate on anisotropic partitions. 
% \begin{lemma}[Quadrature error on anisotropic partitions]\label{lem:quadrature-general}
%   Let
%   $F_K : \widehat K \to K \subset \mathbb R^d$ be a $C^{p+1}$ diffeomorphism
%   onto the physical element $K$, with $J_K(\hat x) \doteq \det DF_K(\hat x)$.
%   Assume that the geometric maps satisfy the following uniform regularity
%   estimate: there exists a constant $C_{\mathrm{geo}}>0$, independent of $K$,
%   such that, for all $f \in H^{p+1}(K)$,
%   \begin{equation}\label{eq:mapping-estimate}
%     \bigl\| f\circ F_K \, J_K \bigr\|_{H^{p+1}(\widehat K)}
%     \;\le\;
%     C_{\mathrm{geo}}\, h_K^{p+1}\, \|f\|_{H^{p+1}(K)},
%   \end{equation}
%   where $h_K \doteq \operatorname{diam}(K)$.
%   Then there exists a constant $C>0$, depending only on $\widehat K$
%   and $\widehat Q$, but not on $K$ or $f$, such that for all $f\in H^{p+1}(K)$,
%   \begin{equation}\label{eq:local-quad-error}
%     \left|
%       \int_K f(x)\,dx - Q_K(f)
%     \right|
%     \;\le\;
%     C h_K^{p+1}\, \|f\|_{H^{p+1}(K)}.
%   \end{equation}
%   \end{lemma}
  
  \begin{lemma}[Anisotropic quadrature error for axis-aligned boxes]\label{lem:anisotropic-box-quad-error}
  Let $\widehat K = \prod_{i=1}^d [0,1]$ and $K = \prod_{i=1}^d [a_i, a_i + h_i]$ be axis-aligned boxes in $\mathbb{R}^d$, and let $F_K : \widehat K \to K$ be the affine map $F_K(\hat x) = (a_1 + h_1 \hat x_1, \ldots, a_d + h_d \hat x_d)$. Let $Q_K$ be the tensor-product Gaussian quadrature of degree $p$ in each direction, and $f \in H^{p+1}(K)$. Then there exists a constant $C > 0$, depending only on $d$ and $p$, such that
  \begin{equation}\label{eq:anisotropic-box-quad-error}
      \left|
        \int_K f(x)\,dx - Q_K(f)
      \right|
      \;\le\;
      C \sum_{|\alpha|=p+1} \prod_{i=1}^d h_i^{\alpha_i+1} \| D^\alpha f \|_{L^\infty(K)}. 
  \end{equation}
  \end{lemma}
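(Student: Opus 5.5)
We pull the integral back to the reference box, apply a Bramble--Hilbert / Peano-kernel type argument for the reference quadrature, and then push forward. Here "Gaussian quadrature of degree $p$" means a tensor-product rule whose reference operator $\widehat Q$ is exact on $\mathbb{Q}_p(\widehat K)$, hence in particular on $\mathbb P_p(\widehat K)$. The hypothesis $f\in H^{p+1}(K)$ is only needed to make sense of derivatives; the right-hand side uses $L^\infty$ norms, so we work with $f$ whose derivatives of order $p+1$ are bounded (otherwise the bound is trivial). Quadrature point evaluation is defined by Sobolev embedding or by continuity.

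\textbf{Step 1 (pullback).} Since $F_K$ is affine and diagonal, $J_K = \prod_i h_i$ is constant. Set $\hat f = f\circ F_K$. Then
\[
\int_K f\,dx - Q_K(f) = \Big(\prod_{i=1}^d h_i\Big)\Big(\int_{\widehat K}\hat f\,d\hat x - \widehat Q(\hat f)\Big).
\]

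\textbf{Step 2 (reference estimate).} The functional $E(\hat g) = \int_{\widehat K}\hat g - \widehat Q(\hat g)$ is linear and vanishes on $\mathbb P_p(\widehat K)$. It is also bounded by $C\|\hat g\|_{L^\infty(\widehat K)}$, because the weights are positive and sum to $1$. Let $\pi\hat f$ be the Taylor polynomial of degree $p$ of $\hat f$ at the centre of $\widehat K$. Then $E(\hat f) = E(\hat f - \pi\hat f)$. By the integral form of the Taylor remainder, $\|\hat f-\pi\hat f\|_{L^\infty(\widehat K)} \le C\sum_{|\alpha|=p+1}\|D^\alpha\hat f\|_{L^\infty(\widehat K)}$, with $C$ depending only on $d,p$. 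We keep the full multi-index sum rather than a seminorm; this is the point that makes the estimate anisotropic.

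\textbf{Step 3 (chain rule).} Because $F_K$ is diagonal, $D^\alpha\hat f(\hat x) = \prod_i h_i^{\alpha_i}\,(D^\alpha f)(F_K\hat x)$ exactly, with no mixing of directions. Hence $\|D^\alpha \hat f\|_{L^\infty(\widehat K)} = \prod_i h_i^{\alpha_i}\|D^\alpha f\|_{L^\infty(K)}$. Combining this with Steps 1 and 2 gives
\[
\Big|\int_K f - Q_K(f)\Big| \le C\prod_i h_i\sum_{|\alpha|=p+1}\prod_i h_i^{\alpha_i}\|D^\alpha f\|_{L^\infty(K)},
\]
which is the claimed bound with exponent $\alpha_i+1$.

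\textbf{Main obstacle.} The delicate point is Step 2. A naive Bramble--Hilbert argument would bound the error by the isotropic seminorm $|\hat f|_{W^{p+1,\infty}}$ after a non-diagonal pullback. That would mix the directional scalings and produce $\max_i h_i$ powers instead of the product structure. Using Taylor expansion directly on the reference box, together with the exact diagonal chain rule, avoids this.

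Two further checks are needed. First, the constant must not depend on $K$; it does not, since all estimates are carried out on the fixed $\widehat K$ with a fixed rule. Second, the quadrature functional must be $L^\infty$-stable, which holds because Gauss weights are positive.
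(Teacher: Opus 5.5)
Your proof is correct, and it is more complete than the paper's, which gives only a short sketch.

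\textbf{What you share with the paper.} Both arguments rely on the two ingredients that produce the anisotropic form of the bound:
\begin{itemize}
\item the pullback through the diagonal affine map, which gives the factor $\prod_i h_i$ from $J_K$ and the exact scaling $D^\alpha \hat f = \prod_i h_i^{\alpha_i}\,(D^\alpha f)\circ F_K$;
\item a multivariate Taylor expansion, which generates all multi-indices with $|\alpha|=p+1$.
\end{itemize}

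\textbf{Where you differ.} The paper's sketch controls the reference error by invoking the standard Gaussian error estimate in each coordinate direction. You use only two facts about $\widehat Q$: it is exact on $\mathbb{P}_p(\widehat K)$, and it is $L^\infty$-stable because its weights are positive and sum to one. You therefore never use the tensor-product structure. Your argument applies verbatim to any positive-weight rule on the cube that is exact on $\mathbb{P}_p$, such as the Witherden--Vincent rules used in the 3D experiments. The cost is that your bound necessarily carries the mixed derivatives, which cannot be avoided for a rule exact only on $\mathbb{P}_p$.

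\textbf{What the direction-wise route buys.} Made precise through the telescoping identity
\[
I - Q = \sum_{j=1}^d Q^{(1)}\otimes\cdots\otimes Q^{(j-1)}\otimes\bigl(I^{(j)}-Q^{(j)}\bigr)\otimes I^{(j+1)}\otimes\cdots\otimes I^{(d)},
\]
it exploits the tensor structure. Combined with positivity of the one-dimensional weights, it gives the sharper bound $C\bigl(\prod_i h_i\bigr)\sum_{j=1}^d h_j^{p+1}\|\partial_j^{p+1} f\|_{L^\infty(K)}$. This involves only the pure derivatives $\alpha=(p+1)e_j$, and it implies the stated estimate.

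\textbf{One routine point you gloss over.} For $f\in W^{p+1,\infty}(K)$ rather than $C^{p+1}$, the integral form of the Taylor remainder needs a short justification. Either mollify, or write the order-$p$ remainder in terms of the Lipschitz $p$-th derivatives.
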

  \begin{proof}
  The proof follows by applying the standard tensor-product Gaussian quadrature error estimate in each direction, using the affine map $F_K$ and the multi-dimensional Taylor expansion. The error is a sum over multi-indices $\alpha$ with $|\alpha| = p+1$, and each term involves the product of the mesh sizes $h_i$ raised to $\alpha_i+1$ and the corresponding mixed derivative of $f$. 
  \end{proof}

We would like to compute the cell-wise quadrature error
$\abs{I_K(f) - Q_K(f)}$, which is not available in general. We therefore
estimate it by considering a richer quadrature $\widehat Q'$ that is exact on
$\mathbb P_{p'}(\widehat K)$ and the corresponding $Q'_K$ obtained via
pullback, usually referred to as the reference quadrature. The
absolute integration error incurred by $Q_K$ can then be estimated by
$\delta_K \coloneqq \abs{Q'_K - Q_K}$ for all $K \in \mathcal{T}$, with global
estimator $\delta \coloneqq \sum_{K \in \mathcal{T}} \delta_K$. We assume that this estimate is efficient
and reliable, i.e., there exist constants  $c_{\mathrm{eff}} > 0$ and
$C_{\mathrm{rel}} > 0$ such that
\begin{equation}\label{eq:local_quad_reliability}
  c_{\mathrm{eff}} \delta_K \leq \abs{I_K(f) - Q_K(f)} \leq C_{\mathrm{rel}} \delta_K,
  \quad \forall K \in \mathcal{T}.
  \tag{A1$'$}
\end{equation}
This assumption is standard in adaptive quadrature, where the integration error
is estimated by comparing quadrature rules of different orders. Its
reliability, however, typically relies on regularity properties of the
integrand and is not guaranteed in full generality. In our setting, it should
therefore be understood as an idealised condition that guides the design of the
adaptive strategy. The remainder of the construction explains how
\eqref{eq:local_quad_reliability}, together with the adaptive stopping
criterion, provides a mechanism to enforce the global perturbation estimate
\eqref{eq:rel_int_error}.

Conceptually, we mark cells with the largest quadrature errors and refine the
partition accordingly. The new quadrature rule is then defined by applying the
same local quadrature rule on the refined partition
$\mathcal{T}_{\mathrm{ref}}$. Assigning $\mathcal{T} \leftarrow
\mathcal{T}_{\mathrm{ref}}$ and repeating this process yields increasingly
accurate approximations of $I_\omega(f)$ as the cell diameters decrease. 
\begin{lemma}\label{lemma:A1prime-implies-A1}
Assume that the local estimator-control condition~\eqref{eq:local_quad_reliability}
holds and that the adaptive loop terminates at the current network realisation
$v_{\#}$ once
\begin{equation}\label{eq:rel-stopping-criterion}
\delta^{1/p} \leq \xi \tilde \scrJ_\omega(v_{\#}),
\end{equation}
is satisfied for a prescribed relative tolerance $\xi > 0$. If $\xi$ is small enough so that $C_{\mathrm{rel}}^{1/p}\xi < 1$, then the perturbation bound~\eqref{eq:rel_int_error} holds for 
\[
\varepsilon_{\mathrm{rel}}(v_{\#})
\doteq
\frac{C_{\mathrm{rel}}^{1/p}\xi}{1-C_{\mathrm{rel}}^{1/p}\xi}.
\]
\end{lemma}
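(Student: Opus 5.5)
The plan is to work directly with the model contribution $\scrJ_\omega(v_{\#}) = I_\omega(f)^{1/p}$ and its composite quadrature counterpart $\tilde\scrJ_\omega(v_{\#}) = Q(f)^{1/p}$, where $Q(f) \doteq \sum_{K\in\mathcal{T}} Q_K(f)$ on the final partition produced by the adaptive loop. We first note that $Q(f)\ge 0$, so that $\tilde\scrJ_\omega$ is well defined. This holds because $f\ge 0$ and the Gaussian-type reference rules have positive weights, and $J_K>0$. Throughout, $\varepsilon_{\mathrm{rel}}$ is understood for the term $\scrJ_\omega$ at the fixed realisation $v_{\#}$.

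\textbf{Step 1 (global integral error).} Summing the upper bound in~\eqref{eq:local_quad_reliability} over $K\in\mathcal{T}$ and using the triangle inequality, we obtain
\[
|I_\omega(f) - Q(f)| \le \sum_{K} |I_K(f)-Q_K(f)| \le C_{\mathrm{rel}}\,\delta .
\]
Only reliability is needed here; efficiency plays no role.

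\textbf{Step 2 (passing to the $1/p$ power).} This is the step requiring care, since the stopping criterion~\eqref{eq:rel-stopping-criterion} is stated in terms of $\delta^{1/p}$ rather than $\delta$. For $p\ge1$, the map $t\mapsto t^{1/p}$ on $[0,\infty)$ is $1/p$-Hölder with constant one, i.e.\ $|a^{1/p}-b^{1/p}|\le |a-b|^{1/p}$. This follows from subadditivity of the concave function $t^{1/p}$ with $t^{1/p}(0)=0$: assuming $a\ge b$, we have $a^{1/p}\le b^{1/p}+(a-b)^{1/p}$. Combining this with Step 1 and~\eqref{eq:rel-stopping-criterion} gives
\[
|\scrJ_\omega(v_{\#})-\tilde\scrJ_\omega(v_{\#})| \le (C_{\mathrm{rel}}\delta)^{1/p} \le C_{\mathrm{rel}}^{1/p}\,\xi\,\tilde\scrJ_\omega(v_{\#}).
\]

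\textbf{Step 3 (switching the reference from $\tilde\scrJ$ to $\scrJ$).} The bound from Step 2 is relative to the computed loss, whereas~\eqref{eq:rel_int_error} requires a bound relative to the exact one. Set $\eta \doteq C_{\mathrm{rel}}^{1/p}\xi<1$. The triangle inequality yields $\tilde\scrJ_\omega \le \scrJ_\omega + \eta\,\tilde\scrJ_\omega$, and hence $\tilde\scrJ_\omega \le \scrJ_\omega/(1-\eta)$. Substituting this back into Step 2 gives
\[
|\scrJ_\omega-\tilde\scrJ_\omega| \le \frac{\eta}{1-\eta}\,\scrJ_\omega ,
\]
which is exactly~\eqref{eq:rel_int_error} with the stated $\varepsilon_{\mathrm{rel}}(v_{\#})$.

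The main obstacle I anticipate is Step 2. It relies on the elementary Hölder inequality for $t^{1/p}$, and its applicability requires checking nonnegativity of $Q(f)$ as noted above. The condition $\eta<1$ is used only in Step 3, to make the rearrangement legitimate.
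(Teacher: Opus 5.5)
Your proposal is correct and follows the paper's proof essentially step by step. You sum the reliability bound, pass to the $1/p$-power via subadditivity of $t\mapsto t^{1/p}$ (what the paper calls concavity), apply the stopping criterion, and then rearrange $\tilde{\mathscr{J}}_\omega \le \mathscr{J}_\omega + C_{\mathrm{rel}}^{1/p}\xi\,\tilde{\mathscr{J}}_\omega$ to obtain a bound relative to the exact loss. One small correction: nonnegativity of $Q(f)$ rests on the \emph{primal} rule $\widehat Q$ having nonnegative weights, not the reference rule. This is an extra hypothesis, not a consequence of the lemma's assumptions, and the paper likewise adopts it only implicitly, in the remark following the lemma.
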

\begin{proof}
Summing~\eqref{eq:local_quad_reliability} over $K \in \mathcal{T}$, using
the concavity of $x \mapsto x^{1/p}$ and invoking~\eqref{eq:rel-stopping-criterion}, we obtain
\[
\abs{\scrJ_\omega(v_{\#})-\tilde \scrJ_\omega(v_{\#})}
\leq
C_{\mathrm{rel}}^{1/p}\delta^{1/p}.
\leq
C_{\mathrm{rel}}^{1/p}\xi\,\tilde \scrJ_\omega(v_{\#}).
\]
Hence
\[
\tilde \scrJ_\omega(v_{\#})
\leq
\scrJ_\omega(v_{\#})
+
\abs{\scrJ_\omega(v_{\#})-\tilde \scrJ_\omega(v_{\#})}
\leq
\scrJ_\omega(v_{\#})
+
C_{\mathrm{rel}}^{1/p}\xi\,\tilde \scrJ_\omega(v_{\#}),
\]
so $(1-C_{\mathrm{rel}}^{1/p}\xi)\tilde \scrJ_\omega(v_{\#}) \le \scrJ_\omega(v_{\#})$.
Combining this with the previous estimate yields
\[
\abs{\scrJ_\omega(v_{\#})-\tilde \scrJ_\omega(v_{\#})}
\leq
\frac{C_{\mathrm{rel}}^{1/p}\xi}{1-C_{\mathrm{rel}}^{1/p}\xi}\,\scrJ_\omega(v_{\#}),
\]
which is exactly~\eqref{eq:rel_int_error} for this contribution.
\end{proof}
In particular, for small $\xi$ , the resulting relative perturbation is of order $\mathcal{O}(\xi)$.
\begin{remark}
In practice, one also needs an absolute tolerance $\rho$ on $\delta$ to account
for rounding errors and as a safety net for the case in which the integral is
very small. Accordingly, the adaptive loop is stopped once either the relative
criterion~\eqref{eq:rel-stopping-criterion} or the absolute criterion is met,
that is,
\begin{equation}\label{eq:abs-stopping-criterion}
\delta^{1/p} \leq \max(\rho, \xi \tilde \scrJ_\omega(v_{\#})).
\end{equation}
\end{remark}
\begin{remark}
We are implicitly assuming here, and elsewhere in the discussion, that
$\tilde \scrJ_\omega(v_{\#}) \geq 0$. Indeed, the exact contribution
$\scrJ_\omega(v_{\#})$ is a norm of the corresponding residual and is therefore
nonnegative. Likewise, if the primal quadrature rule has nonnegative weights,
then $\tilde \scrJ_\omega(v_{\#}) \geq 0$ as well, and the assumption is satisfied. 
In our experiments, we have only considered positive-weight quadratures, which we have observed experimentally to be essential for the success of the neural solvers.
\end{remark}
\subsection{On the choice of the refined quadrature}

A common and highly effective choice for adaptive quadrature is to use nested rules, in which the refined quadrature reuses the quadrature points of the primal rule. A standard example is provided by Gauss-Kronrod rules, where additional points are added to a Gauss rule to increase its degree. However, in the context of \ac{nn} training, nested quadrature-based adaptive approaches have major disadvantages. First, because the low-order and high-order rules reuse points, they are less effective at detecting overfitting scenarios, which are common in \ac{nn} training and are precisely one of the reasons for using adaptive quadrature in the first place. Second, making such quadratures efficient in terms of the total number of evaluation points per cell often leads to negative weights. In \ac{nn} training, this can lead to severe instabilities, since the loss may become negative and unbounded from below, thereby encouraging the optimiser to overfit by driving the solution towards large negative blow-ups.

To remedy these drawbacks while retaining the (anisotropic) bisection refinement strategy, we propose using polynomial quadratures with positive weights. Tensor-product Gauss-Legendre rules are of particular interest here; they are accurate for tensor-product polynomial spaces up to degree $2k - 1$, where $k$ is the number of points in each spatial variable (hereafter referred to as the quadrature order). We also explore (quasi-) optimal polynomial quadratures, such as the Witherden-Vincent~\cite{WitherdenVincent2015} and Xiao-Gimbutas~\cite{XiaoGimbutas2010} families. These similarly feature positive weights, with only minor exceptions in the Xiao-Gimbutas family. Ensuring positive weights is also crucial from an analysis perspective, as it guarantees that the quadrature-based approximations of norms and inner products remain well-defined, and thus the functional $\tilde\scrJ$.

\subsection{Anisotropic refinement
rule}\label{subsec:anisotropic-refinement-rules}

The fundamental idea of the $h$-adaptive composite quadrature approach is to begin
with a coarse partition of the integration domain $\TT_{\texttt{init}}$ and to
refine the cell with the largest error estimate among $(\delta_K)_{K \in \T}$,
repeating this process on the updated partition until the stopping criterion is
met. Instead of using isotropic refinement of the marked cells, we consider
anisotropic refinement in order to reduce the number of quadrature points
generated after refinement and to mitigate the curse of dimensionality.\footnote{We note that the partition is required only for integration, which is why it is straightforward to handle highly anisotropic partitions. As a result, this approach offers advantages over the adaptive finite element interpolation strategies in~\cite{AdaptiveFEINNs2024,CompatibleFEINNs}, which usually involve mesh constraints such as 2:1 balance and shape regularity.} It is noteworthy
that the bisection refinement strategy results in far fewer partitions to
achieve the same relative tolerance and is hence computationally cheaper.
% (see~\cref{subsec:uniform-vs-bisection} for comparisons). 

However, this requires a strategy for selecting the bisection plane for a
``marked'' cell. We formalise this strategy as follows. Assume that
$\widehat K$ is a $d$-cube, and let its coordinate axes be indexed by
$j \in \{1,2, \ldots, d\}$. To each axis we assign a positive value
$\zeta^j_K$ that measures the expected reduction in the quadrature error
estimate on $K$ when it is bisected along the central plane orthogonal to
direction $j$. When $K$ is marked for refinement, the quantities $\zeta^j_K$
are computed for all $j \in \{1, \ldots, d\}$, and the axis corresponding to
the largest value of $\zeta^j_K$ is selected for bisection.

Ideally, one would bisect the element $K$ along each central axis and compute
the resulting reduction in the quadrature error estimate for every candidate
partition. These reductions define the ideal quantities
$\widehat \zeta^j_K$, which determine the optimal bisection axis.
\begin{equation} \label{eq:ideal-dir-estimate}
\widehat \zeta^j_K \doteq \delta_{K^{+}_j} + \delta_{K^{-}_j}, \quad \forall j \in \{1,2, \ldots, d\}
\end{equation}
where $K^{+}_j$ and $K^{-}_j$ are child cells obtained by bisecting $K$
along the plane corresponding to axis $j$. (In the algorithms, the child cells
obtained from $K$ are denoted by the set $\mathcal{C}(K)$.)
However, computing the ideal directional error estimates,
$\{\widehat{\zeta}^j_K\}_{j=1}^d$, can become computationally prohibitive as
the problem dimensionality $d$ increases. To ensure computational efficiency,
it is necessary to derive alternative, efficient surrogate indicators,
$\{\zeta^j_K\}_{j=1}^d$. These cheaper equivalents must serve the same purpose:
providing a reliable heuristic for the reduction in quadrature error along each
axis without requiring the full evaluation of candidate sub-quadratures.

Alternatively, the role of $\widehat{\zeta}^j_K$ can be reinterpreted in terms
of the difficulty of estimating the integral along direction $j$, as measured
by $\zeta^j_K$. Refinement along the corresponding central axis plane is then
expected to provide the best reduction in the quadrature error estimate among
all possible bisections. This can be formulated through the implication
\begin{equation}
  \zeta^j_K \leq \zeta^l_K \implies \widehat{\zeta}^j_K \leq \widehat{\zeta}^l_K, \quad \forall j, l \in \{1,2, \ldots, d\}.
\end{equation}
This leads to the heuristic approach of Genz-Malik
\cite{GenzMalik1980}, based on the work of~\cite{VDooren1976}, in which
$\zeta^j_K$ is defined through fourth differences,
that is, symmetric finite-difference approximations of the fourth derivative of
the integrand. These quantities are intended to measure the irregularity of the
integrand, and hence the difficulty of integration, in the corresponding
direction. The fourth differences are constructed in a standard way, using $5$ points
arranged symmetrically around the centroid, including the centroid itself,
along the coordinate axis $j$ in the reference cell $\widehat{K}$. If $Q'_K$
involves quadrature points
available symmetrically along the coordinate axes, we can reuse these points
for computing $\{\zeta^j_K\}_{j=1}^d$. This is the case for quadratures used in
Genz-Malik and also tensor-product Gauss-Legendre quadratures. However, for
Witherden-Vincent and Xiao-Gimbutas families we need to additionally construct
these points, typically performed using 1D Gauss-Legendre quadrature points of
order $5$ or above, along each central coordinate axis.

It is important to note that this procedure can also be extended to simplices,
provided the anisotropic bisection is defined appropriately, as proposed in
\cite{Genz1991simplex,GenzCools2003simplex}. Alternatively, in $2$D one can handle simplices within
the framework of~\cref{alg:h-adapt-quad}, which is designed for quadrilaterals,
by incorporating a Duffy transform into the reference-to-physical map $F_K$.
This allows the refinement process on the reference square to induce the
corresponding partitions on the simplex in physical space.

\subsection{Global adaptive quadrature algorithm} \label{sec:adaptive-quad-algo}
\Cref{alg:h-adapt-quad} outlines the global adaptive procedure for
constructing the quadrature rule using the ingredients introduced above.
In particular, $\xi$ is the algorithmic relative tolerance appearing in the
stopping rule~\eqref{eq:rel-stopping-criterion}, while $\rho$ is the
corresponding absolute safeguard for very small integral contributions.
\begin{algorithm}[ht]
\DontPrintSemicolon
\caption{The (anisotropic) bisection-based $h$-adaptive quadrature}\label{alg:h-adapt-quad}
\KwSty{Requirements:} \\
\texttt{rule} (\texttt{degree} ($p$), \texttt{degree\_ref} ($p'$)), \texttt{stopping\_criterion} (\texttt{rtol} ($\xi$, relative tolerance),
\texttt{atol} ($\rho$, absolute tolerance), \texttt{maxevals})\\
\KwIn{$f$, $\TT_{\texttt{init}}$}
\KwOut{$S$, $E$, $\TT$}

SET \texttt{$\TT \gets \TT_{\texttt{init}}$}\;

COMPUTE the local quadrature approximations $Q_K$ and $Q'_K$ of $I_K(f)$ using
    polynomial quadratures of \texttt{degree} and \texttt{degree\_ref} over
    all cells in $\TT$\;

ESTIMATE the local quadrature error $\delta_K$ for each element $K \in
\TT$ by $\lvert Q_K - Q'_K \rvert$\;

SET $S \gets \sum_{K \in \TT} Q_K$ and
    $E \gets \sum_{K \in \TT} \delta_K$\;

\While{\upshape \texttt{stopping\_criterion}($\texttt{rtol}, \texttt{atol}, \texttt{maxevals}$) is not met}{ 

    MARK the element with highest error estimate $K^{\ast} \coloneqq \argmax_K
    \delta_K$\;

    BISECT $K^{\ast}$ into child elements, $\mathcal{C}(K^{\ast})$, along
    the plane of \emph{anisotropy} determined by the corresponding dominant
    fourth difference axis: $\hat{\pmb{n}}_{K^\ast} \coloneqq F_{K^\ast} \circ (\argmax_j
    \{\zeta^j_{K^\ast}\})$, resulting in the refined partition
    $\TT \gets (\TT \setminus \{K^\ast\}) \cup \mathcal{C}(K^\ast)$\; 

    COMPUTE $Q_K$, $Q'_K$, and $\delta_K$ for each $K \in \mathcal{C}(K^{\ast})$\;

    UPDATE the current integral estimate $S \leftarrow S - Q_{K^\ast}
    + \sum_{K \in \mathcal{C}(K^{\ast})} Q_K$ and current global error estimate
    $E \leftarrow E - \delta_{K^\ast} + \sum_{K \in
    \mathcal{C}(K^{\ast})} \delta_K$\;
    }

\KwRet{$S$, $E$, $\TT$}
\end{algorithm}
It is noteworthy that the adaptive quadrature construction procedure is tied to
a specific state of the loss functional and hence to a specific \ac{nn}
instance. Since we start from a coarse mesh and refine cells one by one, there
is no need for explicit coarsening within a single adaptive quadrature build.
As we will see in the training methodology, rebuilding the quadrature from a
base mesh at each refresh makes coarsening and redistribution an emergent
feature of the overall procedure.
This is precisely the connection with~\Cref{sec:methodology}:
the refresh strategy there repeatedly reconstructs the adaptive quadrature so
that the quadrature control targeted in the present section is maintained as
the network evolves during training.
The error estimate in~\Cref{lem:anisotropic-box-quad-error} implies convergence of the adaptive
algorithm under uniform refinement and also under the anisotropic refinement
rule, provided the directional error indicator reliably selects the most
effective refinement direction.
While the extension of the \ac{aq} strategy to simplicial meshes (triangles and
tetrahedra) is a direct generalisation based on the refinement rules discussed
in~\Cref{subsec:anisotropic-refinement-rules}, it is omitted here for brevity
and reserved for future investigation.   

\section{Methodology and Implementation} \label{sec:methodology}

To balance computational efficiency and avoid overfitting, we propose an
automatic training strategy that monitors the relative global quadrature error
and triggers a refresh only when the estimated error exceeds a prescribed
threshold. The estimate is computed using the primal (training) and reference
quadratures extracted at the previous AQ trigger.

In addition to the target relative tolerance $\xi$ for the adaptive quadrature itself, we introduce an AQ refresh threshold ($\tau \ge \xi$). The tolerance $\xi$ controls the level of relative perturbation targeted when a new quadrature is constructed, whereas $\tau$ controls when the existing quadrature is considered no longer reliable and must be refreshed. Let $\eta$ denote the relative quadrature error indicator obtained by comparing the primal and reference discrete losses, i.e., a computable approximation of the relative perturbation of the loss functional at the current network realisation. We rebuild the quadrature only when the relative global integration error estimate $\eta$ exceeds $\tau$ during training.
This strategy aims to maintain control of the relative perturbation of the loss functional during training, by triggering quadrature reconstruction only when the current quadrature no longer provides sufficient control for the evolving network state.

\Cref{alg:dls-training} summarises the resulting training strategy. A
refresh is performed whenever the error indicator $\eta$ exceeds the
user-specified threshold $\tau$. At each refresh, the anisotropic
bisection-based $h$-adaptive algorithm is run from a user-provided base
partition until the target relative tolerance $\xi$ is reached. The resulting
composite quadrature then defines the discrete loss used for training and for
estimating the relative global quadrature error.
\begin{algorithm}[ht]
\DontPrintSemicolon
\caption{DLS training with adaptive quadrature (\Cref{alg:h-adapt-quad})}\label{alg:dls-training}
\KwSty{Requirements:} \\
\texttt{AQ\_algorithm (rtol ($\xi$), atol ($\rho$))}, \texttt{refresh\_tol}
($\tau$), \;\texttt{stopping\_criterion} (\texttt{max\_epochs},
\texttt{time\_limit}, \texttt{unsatisfactory\_progress}) \\
\KwIn{$u_{\btheta_0}$, $\scrJ$ (ideal loss representative), $\tilde{\scrJ}$
(primal discrete loss, based on $Q^p$), $\tilde{\scrJ}'$ (reference discrete
loss, based on $Q^{p'}$), $\TT_{\texttt{init}}$} \KwOut{$u_{\btheta_{L+1}}$,
$\TT_{\texttt{history}}$}

SET $\eta^0_\TT \gets \tau$ \tcp*[r]{force initial adaptive step}

\For{\upshape $i = 0, 1, 2, \ldots, \texttt{max\_epochs}$}{

    \If{\upshape $\eta^i_\TT \geq \tau$}{ ADAPT $\TT \gets$
        \texttt{AQ\_algorithm} ($u_{\btheta_i}$, $\scrJ$,
        $\TT_{\texttt{init}}$, $\xi$, $\rho$) \\
        UPDATE the perturbed primal loss $\tilde{\scrJ} \gets (\TT, Q^p)$ and
        perturbed reference loss $\tilde{\scrJ}' \gets (\TT, Q^{p'})$ \\
        COLLECT $\TT_{\texttt{history}} \gets \left(i, \btheta_i,
        \TT\right)$
    }

    TRAIN using $\tilde{\scrJ}$ as the loss function, updating the NN
    parameters from $\btheta_i$ to $\btheta_{i+1}$\;
    
    ESTIMATE the error indicator $\eta^{i+1}_\TT$ by applying
    $\tilde{\scrJ}$ and $\tilde{\scrJ}'$ to the updated approximation
    $u_{\btheta_{i+1}}$\;

    \If{\upshape \texttt{stopping\_criterion}(\texttt{max\_epochs},
    \texttt{time\_limit}, \texttt{unsatisfactory\_progress}) is
    met}{\KwSty{break}} }

    SET $I \gets i$;

\KwRet{\upshape $u_{\btheta_{I+1}}$, $\TT_{\texttt{history}}$}
\end{algorithm}
\Cref{alg:dls-training} is fully deterministic and uses full-batch
training. The adaptive quadrature algorithm (Algorithm
\ref{alg:h-adapt-quad}) is executed on the CPU, where the primal and reference
quadrature points and weights are accumulated in physical space. These data are
then transferred to the target device, which is a GPU in our implementation.
Training and quadrature-error estimation are subsequently performed on the GPU
for the current partition. At the end of each epoch, the indicator $\eta$ is
recomputed; whenever $\eta \ge \tau$, the adaptive quadrature is rebuilt on the
CPU. For efficiency, an AQ refresh is never triggered during line-search
iterations. ~\
Between refresh steps, the quadrature is kept fixed while the network evolves, so the error indicator reflects the degradation of the quadrature accuracy over the training trajectory.

Although single-GPU~\cite{Pagani2021} and multi-GPU
\cite{tonarelli2025AQmultigpu} extensions for adaptive quadrature exist, we do
not use them in this work. Numerical experiments indicate that adaptive
quadrature is only a minor contributor to the total compute time, since the
number of AQ executions is a small fraction of the total number of training
epochs. Moreover, for ambient dimensions $d \leq 4$, the speed-up offered by a
GPU for adaptive quadrature construction is not significant compared with a
CPU.

A key feature of our approach is that automatic differentiation is not required to propagate
through the adaptive quadrature algorithm itself.
Instead, the algorithm is used only to construct the quadrature points and
weights, which are then used to approximate the loss functional for training
and quadrature error estimation.

Because \ac{nn} training is performed exclusively with the primal
quadrature, we also reduce computational cost by avoiding the calculation of
parametric derivatives of the loss evaluated on the richer reference
quadrature.

\section{Numerical Experiments}\label{sec:numexps}

In this section, we assess the proposed anisotropic $h$-adaptive composite
quadrature on a collection of benchmark problems that spans steady, time-dependent and parametric linear and nonlinear
\acp{pde} with sharp layers, localised structures and non-trivial boundary
conditions. Our aim is to determine whether explicit control of a reliable
estimate of the global integration error reduces overfitting and improves
approximation quality relative to standard non-adaptive quadrature strategies,
especially in regimes where classical discretisations often require
specialised adaptivity or problem-specific enhancements.

\subsection{Experimental setup}

We now summarise the comparison protocol and the implementation choices shared
across the experiments. Throughout this section, we refer to the richer
quadrature and the corresponding higher-fidelity loss evaluation as the
\emph{reference} quadrature and \emph{reference} loss, respectively.
Close agreement between training and reference losses is therefore essential
for a reliable method, whereas a persistent mismatch indicates that the
training quadrature is no longer reliable.

\medskip
\noindent
\emph{Neural network approximation ansatz.}
All experiments use fully connected, feed-forward \acp{nn} with $\tanh$
activation and a linear output layer. We describe each network by its input and
output dimensions, the number of hidden layers, and the common width of those
hidden layers. Tanh \acp{nn} are globally $C^\infty$ smooth and have
favourable approximation properties~\cite{TanhDeRyck2021}, which makes them
natural candidates for strong-form residual minimisation. Their smoothness also
permits direct evaluation of the residual without differentiability issues, in
contrast to non-smooth activations that can be problematic for loss
functionals involving nested parametric and ambient derivatives
\cite{MagueresseBadiaAdaptive2024}.

For each problem, the network architecture and initialisation are kept fixed
across the different quadrature strategies in order to isolate the effect of
the integration method. Specifically, parameters are initialised using the
Glorot uniform strategy~\cite{GlorotBengio2010} with a fixed seed. When
comparing with existing benchmarks, we use the same architecture or a smaller
one when restricted by available hardware.

\medskip
\noindent
\emph{Loss function.}
Throughout the numerical experiments, we use relaxed residual losses based on
$L^2$ norms, following the Poisson example in
\Cref{ex:poisson-relaxed-loss}. More precisely, for a residual operator
$\mathcal{R}_\Omega$ in the domain (or space-time domain) and boundary or
initial residual operators $\mathcal{R}_\Gamma$, the loss has the generic form
\[
  \scrJ(v)
  =
  \sqrt{\int_{\Omega} \bigl|\mathcal{R}_\Omega(v)\bigr|^2 \, d\Omega
  +
  \sum_{\Gamma} \gamma_\Gamma \int_{\Gamma} \bigl|\mathcal{R}_\Gamma(v)\bigr|^2 \, d\Gamma}.
\]
Here the sum runs over all relevant boundary and initial-condition
contributions, so several terms may appear simultaneously. In particular, this
allows mixed boundary conditions, such as combined Dirichlet and Neumann terms,
each with its own penalty weight. This reduces to the $L^2$ norm of the strong
residual in the domain together with $L^2$ penalty terms for the boundary
conditions. The same idea is used for the initial and inflow conditions that
appear in the time-dependent and transport examples.\footnote{Observe that we
are using $p = 1$ in the definition of $\scrJ$ for all the numerical
experiments.}

\medskip
\noindent
\emph{Optimiser.}
We use the SSBroyden optimiser~\cite{Al-Baali1998}
throughout, together with the Hager-Zhang inexact line-search algorithm based
on a weak Wolfe condition~\cite{HagerZhang2006}. This
choice is motivated by recent empirical and theoretical work indicating that
quasi-Newton methods can substantially outperform first-order methods and
L-BFGS for high-accuracy training in residual-based neural \ac{pde} solvers
\cite{UrbanPINNOpt2025, KiyaniOpt2025, Wang2025gradalign}. Moreover, as noted
in~\cite{Wang2025gradalign, KiyaniOpt2025}, second-order methods often balance
the different terms in the loss more effectively, thereby reducing the need for
explicit loss reweighting~\cite{WangPathology2021, Wang2022PINNsFailure}.
Our numerical pipeline uses 64-bit floating-point arithmetic to avoid the loss
of accuracy that can degrade second-order optimisation
\cite{KiyaniOpt2025}. When the adaptive quadrature is refreshed,
the optimiser resumes from its previous state and preserves the accumulated
inverse Hessian approximation.

\medskip
\noindent
\emph{Numerical quadrature.}
In most numerical experiments, we employ tensor-product Gauss quadratures with
odd-even order pairings. A typical choice is order $7$ for training and order
$10$ for the reference quadrature. This pairing is used to avoid shared
odd-even artefacts across the two quadratures and to reduce
numerical cancellations along the central axes. In three spatial dimensions,
where tensor-product rules become increasingly expensive, we instead use the
quasi-optimal polynomial quadrature families in
\cite{WitherdenVincent2015,XiaoGimbutas2010}.

\medskip
\noindent
\emph{Comparison with non-adaptive quadrature strategies.}
To assess the effect of quadrature adaptation, we compare the proposed
\ac{aq} algorithm (\Cref{alg:h-adapt-quad}) with several standard non-adaptive
quadrature strategies: composite Gauss-Legendre quadrature on a uniform partition (uniform quadrature), \ac{mc} quadrature, \ac{lhc} (as used in  
\cite{PINNs2019}) and Halton sampling. The latter is a
deterministic low-discrepancy strategy known to perform well in moderate
dimensions~\cite{MorokoffQMC1995}, which covers the ambient dimensions
considered here. To ensure a fair comparison, we match the computational budget of the
non-adaptive baselines to the adaptive runs. The
total number of quadrature points for \ac{mc}, \ac{lhc} and
Halton sampling is fixed from the start to the $90\%$ quantile of the adaptive
history of total quadrature points. In practice, this is usually close to the
maximum attained during training. For uniform quadrature, the number of
partitions is fixed analogously as the nearest square number exceeding the
$90\%$ quantile of the adaptive partition counts. The corresponding richer
quadratures are constructed in the same way, with sizes prescribed by the
$90\%$ quantile of the adaptive history of richer quadrature points.

\medskip
\noindent
\emph{Visualisation.} 
For visualisation of the computed solutions and approximation errors, we use
fine point sets chosen independently of the final training quadrature in order
to avoid misleading presentations of the generalisation behaviour.

\medskip
\noindent
\emph{Computational time.} 
The reported compute times include the cost of evaluating the true
approximation errors ($L^2$ and $H^1$), whenever available, on a fixed fine
mesh at every training iteration. In one and two spatial dimensions, we also
evaluate the loss on this mesh in order to verify the richer quadrature,
although this is not required in practice. The timings should therefore be
interpreted comparatively rather than as absolute benchmarks. All experiments
were run on an NVIDIA V100 GPU.

\medskip
\noindent\emph{Computational framework.}
Our framework is implemented in Julia~\cite{Julia2017}. \acp{nn} are
built with Flux.jl~\cite{FluxOSS2018} and operated via Optimisers.jl, with
automatic differentiation handled by Zygote.jl~\cite{Zygote2019} and
ChainRules.jl. The SSBroyden optimiser is implemented within the Optim.jl
framework~\cite{Optimjl2018}. The anisotropic bisection-based
$h$-adaptive quadrature builds on HCubature.jl~\cite{HCubature} and Gridap.jl
\cite{GridapBadia2020,GridapVerdugo2022} and is executed primarily on the CPU,
whereas \ac{nn} training is performed on the GPU using CUDA.jl
\cite{besard2018juliagpu}. All visualisations were produced with CairoMakie.jl
\cite{DanischKrumbiegel2021}. This implementation split matches the
methodology described in~\cref{sec:methodology}, where quadrature construction
and training are separated computationally.

\subsection{The neural network approximation of the 2D arc-tan well function}
\label{subsec:fa-2d-problem}

We begin with the simplest possible problem: function approximation using a
\nn{}. Specifically, we consider a scalar-valued function in 2D defined on
$\Omega = [0,1]^2$ as follows:
\begin{equation}\label{eq:fa-2d-problem}
f(x,y) = \atan\left(200 \left(\sqrt{(x - 0.35)^2 + (y - 0.45)^2} - 0.2\right)\right).
\end{equation}
The ideal continuous loss function for function approximation is simply the
$L^2$ misfit error in this case.
For this experiment, we employ a $\tanh$ \nn{} with width $25$ and depth $4$,
containing a total of \num{2051} parameters (weights and biases across all
layers). The \ac{aq} algorithm uses tensor-product Gauss-Legendre quadrature
pairs of orders $(7, 10)$, with a base uniform mesh partition of $3 \times 3$
for the adaptive quadrature to build upon, targeting a relative tolerance of
$10^{-2}$. Note that the base mesh provides quadrature point distribution
independent of the solution structure. The tolerance threshold for triggering
adaptive quadrature refinement is set to $5 \times 10^{-2}$, which is $5$ times
the target relative tolerance. Furthermore, the \acp{nn} are trained for
a maximum of \num{10000} epochs, regardless of the quadrature strategy. We
construct a fixed fine mesh of size $100 \times 100$, equipped with
Gauss-Legendre quadrature of the same order as the training (primal) quadrature
employed by the \ac{aq} algorithm, to estimate the $L^2$ and $H^1$ errors
throughout training.
\begin{figure}
  \centering
  \begin{subfigure}[t]{0.45\linewidth}
    \centering \captionsetup{width=.8\linewidth}
    \includegraphics[width=\linewidth]{
        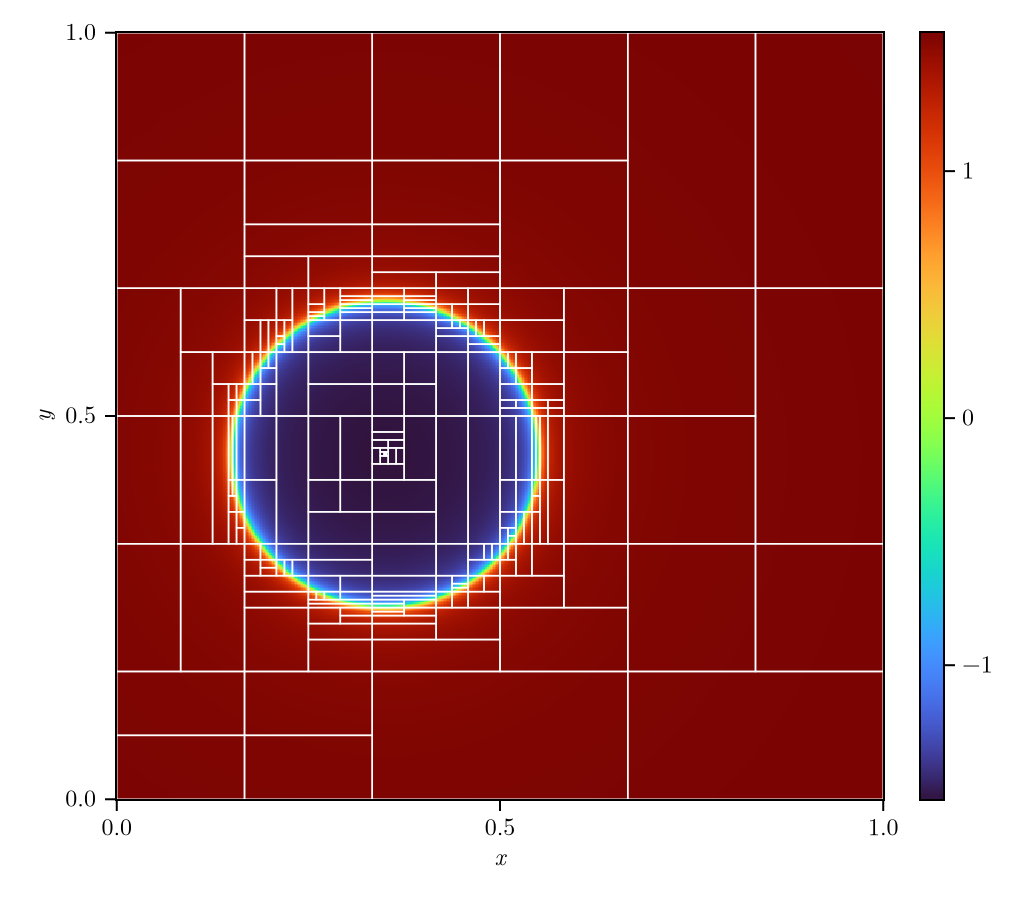
    }
    \caption[NN approximation]{NN approximation and the final adaptive mesh.}
    \label{fig:fa-2d-aq-sol-mesh}
  \end{subfigure}
  \begin{subfigure}[t]{0.47\linewidth}
    \centering \captionsetup{width=.8\linewidth}
    \includegraphics[width=\linewidth]{
        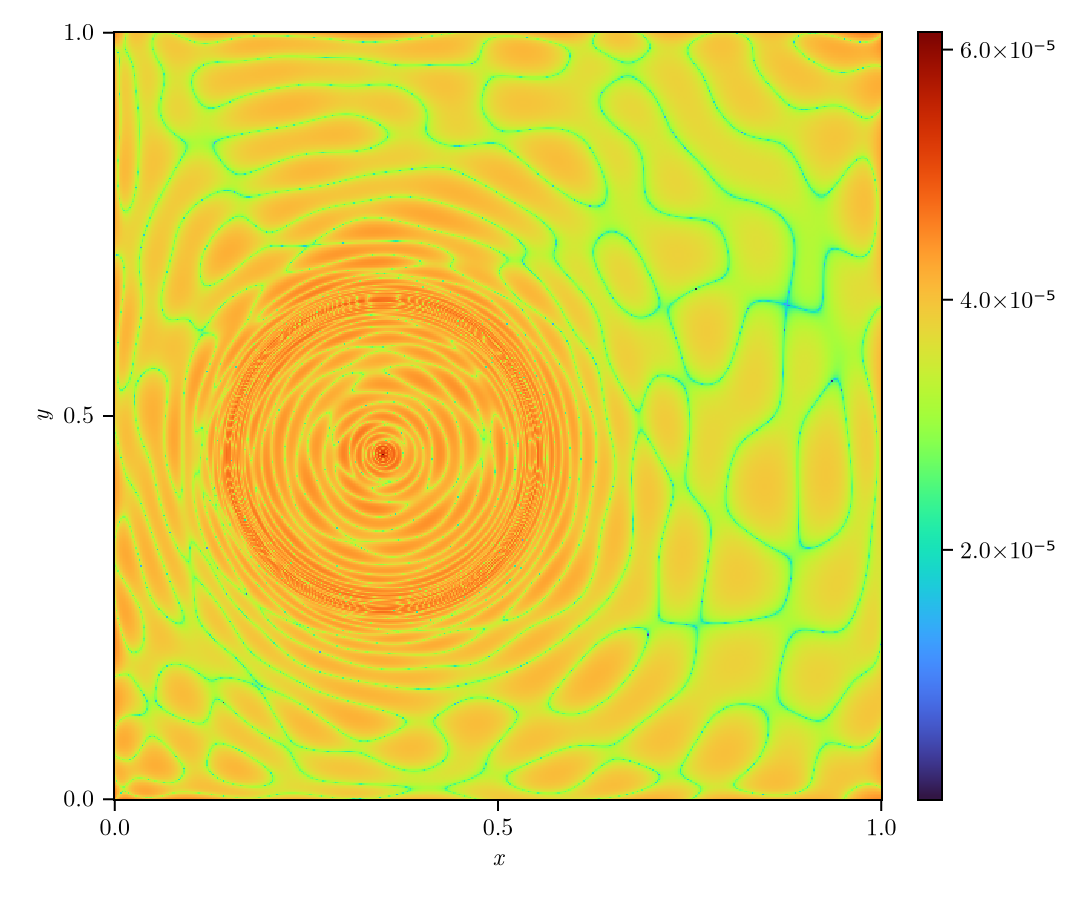
      }
        \caption[Absolute point-wise error]{Absolute point-wise error.}
    \label{fig:fa-2d-aq-error}
  \end{subfigure}
  \caption[NN solution for the function-approximation
  problem]{Adaptive quadrature solution, final adaptive partition and absolute point-wise errors for 
  the function approximation problem
        \protect\eqref{eq:fa-2d-problem} using the AQ algorithm.} 
  \label{fig:fa-2d-aq-results}
\end{figure}

Examining the \ac{nn} approximation and final adaptive quadrature mesh
in~\cref{fig:fa-2d-aq-sol-mesh}, we observe that the \ac{aq} mesh successfully
traces the circular arc where the target function~\eqref{eq:fa-2d-problem}
exhibits steep gradients, achieving highly localized refinement through the
anisotropic nature of the \ac{aq} algorithm. Notably, the \ac{aq} mesh also
captures the center of the well associated with the target function.
\cref{fig:fa-2d-aq-error} shows that the point-wise errors remain
small across the domain. 
\begin{figure}
  \centering
  \begin{subfigure}[t]{0.48\linewidth}
    \centering \captionsetup{width=.8\linewidth}
    \includegraphics[width=\linewidth]{
        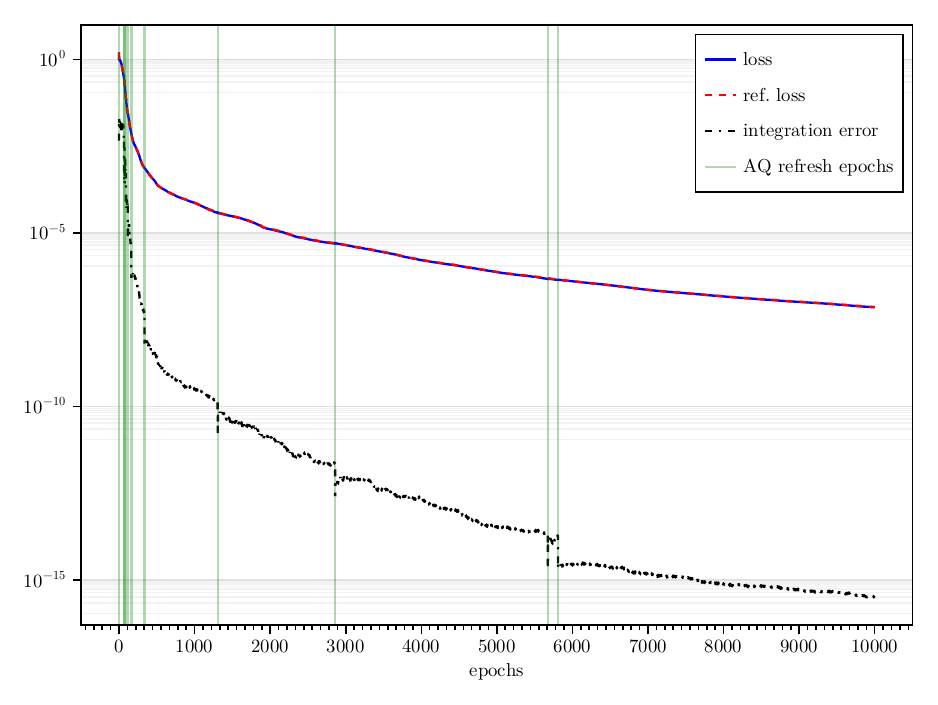
    }
    \caption{Training and
  reference loss histories together with the cell-wise cumulative integration
  error of the primal quadrature relative to the reference quadrature.}
    \label{fig:fa-2d-aq-training}
  \end{subfigure}
  \begin{subfigure}[t]{0.48\linewidth}
    \centering \captionsetup{width=.8\linewidth}
    \includegraphics[width=\linewidth]{
        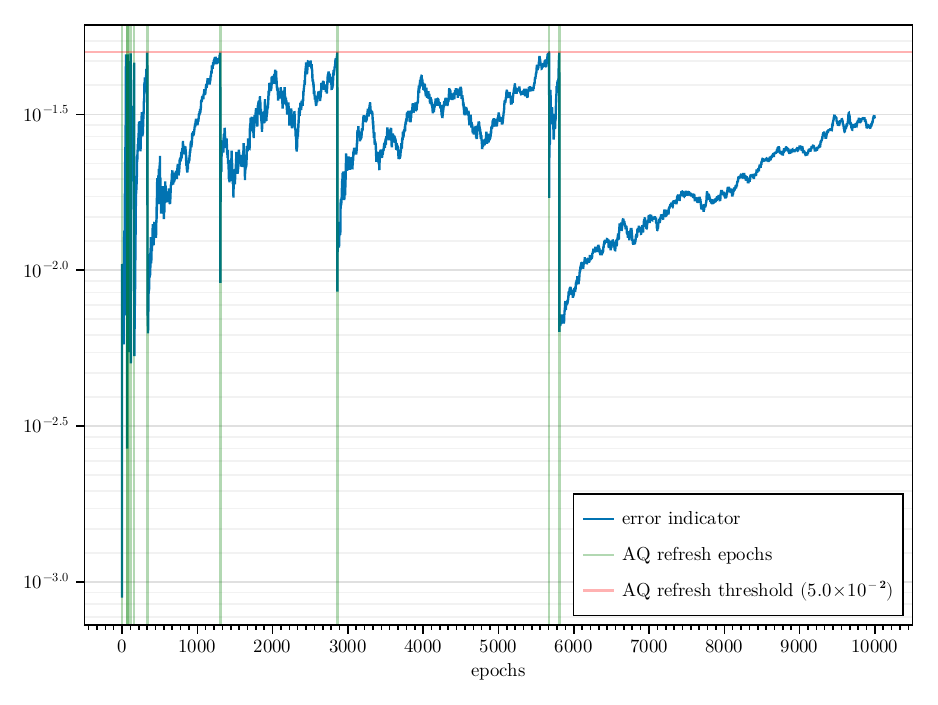
        }
    \caption{Error-indicator history, which triggers AQ refreshes when it exceeds the
  prescribed threshold.}
    \label{fig:fa-2d-aq-error-ind}
  \end{subfigure}
  \caption[Adaptive-quadrature training diagnostics for the 2D ArcTanWell function-approximation problem]{Adaptive training diagnostics for the 
  function approximation in \protect\eqref{eq:fa-2d-problem}.}
  \label{fig:fa-2d-aq-training-results}
\end{figure}

The training curve, and more importantly the reference
curve, are informative indicators of the \ac{nn} performance. In
\cref{fig:fa-2d-aq-training}, the training curve overlaps the
reference curve. Moreover, the systematic downward trend in the
cumulative cell-wise absolute integration error, driven by the marked
\ac{aq} refreshes, supports the reliability of the training process and shows
no indication of overfitting.~\Cref{fig:fa-2d-aq-error-ind} provides a
detailed history of the error indicator throughout training, with immediate
drops following \ac{aq} refreshes triggered when the indicator exceeds the
prescribed threshold. In this experiment, adaptive quadrature refinement occurs
only $11$ times over the \num{10000} training epochs, with refreshes becoming
less frequent as training progresses. Similar AQ training-diagnostic figures arise throughout
the remaining experiments, so we do not repeat them systematically. Instead, we
show them again only for the most challenging Navier-Stokes benchmarks.
\begin{figure}
  \centering
  \begin{subfigure}[t]{1.0\linewidth}
    \centering
    \includegraphics[width=0.62\linewidth]{
        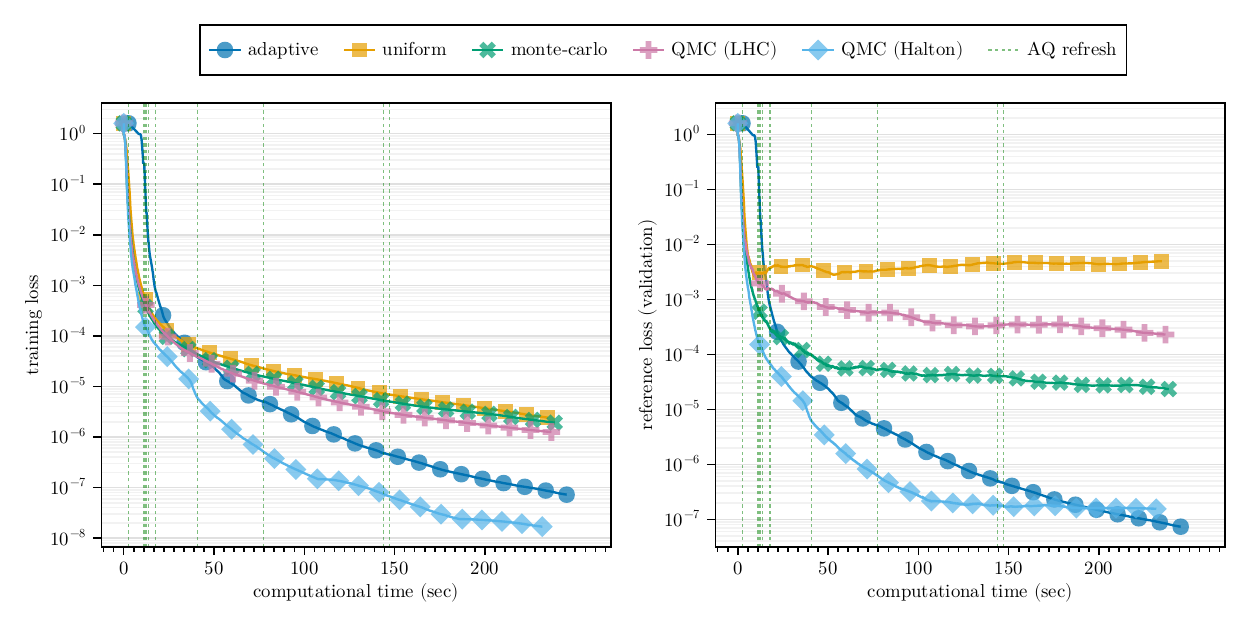
    }
    \caption{Training and reference loss histories across all quadrature strategies.}
    \label{fig:fa-2d-loss-history-panel}
  \end{subfigure}

  \medskip

  \begin{subfigure}[t]{0.62\linewidth}
    \centering
    \includegraphics[width=\linewidth,trim={0 0 0 40},clip]{
        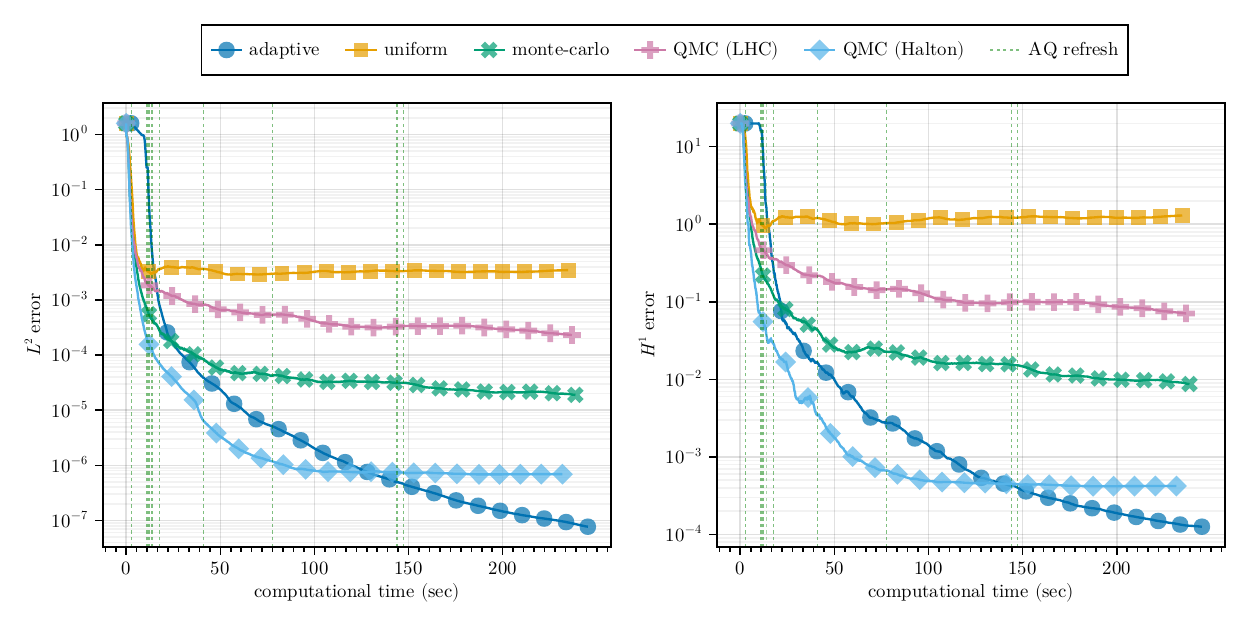
    }
    \caption{$L^2$ and $H^1$ error histories across all quadrature strategies.}
    \label{fig:fa-2d-error-history-panel}
  \end{subfigure}
  \hfill
  \begin{subfigure}[t]{0.34\linewidth}
    \centering
    \includegraphics[width=\linewidth]{
        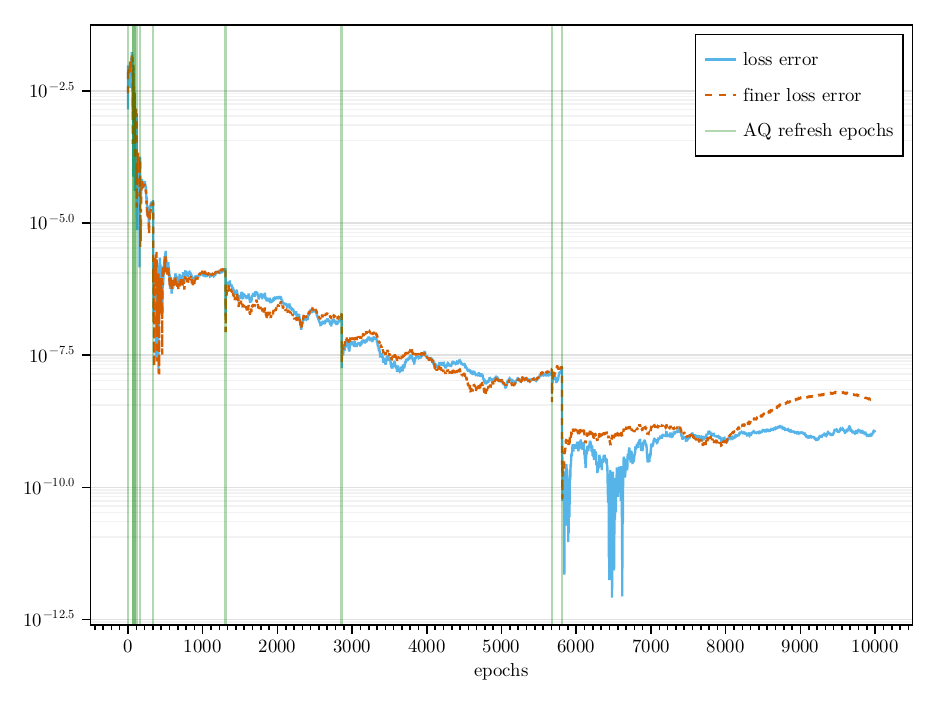
    }
    \caption{Reference quadrature verification.}
    \label{fig:fa-2d-aq-finer-panel}
  \end{subfigure}
  \caption[Loss, error, and verification histories for the function-approximation problem]{Loss, error and verification histories for the function approximation in \protect\eqref{eq:fa-2d-problem}.}
  \label{fig:fa-2d-error-history-comparison}
\end{figure}

We now present a comprehensive comparison of loss curves and approximation
errors across all quadrature strategies in
\cref{fig:fa-2d-loss-history-panel,fig:fa-2d-error-history-panel}: adaptive
quadrature, uniform quadrature, \ac{mc}, \ac{qmc} (\ac{lhc}), and \ac{qmc}
(Halton). We start by observing that the proposed \ac{aq} strategy achieves
superior $L^2$ and $H^1$ errors, followed closely by \ac{qmc} (Halton). 
From~\cref{fig:fa-2d-loss-history-panel}, we observe that only the adaptive
quadrature strategy maintains a monotonic decrease in both training and
reference losses. For the other quadrature approaches, the training loss
continues to decrease while the reference loss stagnates, indicating a loss of
generalisation. Importantly, the reference loss correlates strongly with the
$L^2$ and $H^1$ errors, making it a useful practical indicator of training
progress and reliability. It is also notable that we obtain good $H^1$ errors
even though the loss functional is based only on the $L^2$ error.
Despite employing the most quadrature points, the uniform
quadrature strategy yields the poorest errors by a significant margin,
exposing the inherent disadvantages of uniform partitioning and revealing
Runge phenomena. This is evident in the escalating $H^1$ error progression for
most part of the training, as seen in~\cref{fig:fa-2d-error-history-panel}.

Finally, since the approximation quality of the reference quadrature is
essential for accurately estimating cell-wise errors in the primal quadrature,
we validate its effectiveness by comparing with a finer quadrature. We reuse
the uniform composite quadrature constructed for $L^2$ and $H^1$ error
estimation to perform an independent, accurate loss computation (interpreted
as a test quadrature). As illustrated in
\cref{fig:fa-2d-aq-finer-panel}, we
compare the error incurred by the primal composite quadrature with that of the
reference quadrature (using the same partition) and the finer uniform composite
quadrature. The results demonstrate that the reference quadrature estimates the
error reliably and tracks the same trends throughout training, with only minor
discrepancies after the final adaptive quadrature refinement.

\subsection{The 1D Advection-Diffusion problem} \label{sec:1d-adv-diff}

From this section onwards, we focus on \ac{pde} benchmarks solved by
$L^2$ residual minimisation with \acp{nn}. In this section, we use the relaxed loss
functional described in~\Cref{ex:poisson-relaxed-loss}; for simplicity, we
refer to this setting as the deep least squares approach.
In this section, we consider a 1D advection-diffusion problem with homogeneous
boundary conditions characterized by high P{\'e}clet numbers. We consider two
values of the diffusion coefficient, $\epsilon = (0.005, 0.001)$, with an
advection coefficient $\beta = 1$ and source term $f(x) = 1$ on the
domain $(-1,1)$:
\begin{align} \label{eq:1d-adv-diff}
- \epsilon \Delta u + \pmb{\beta}\cdot \nabla u &= f \quad \text{in \;\; } \Omega\,,\\
u &= 0 \quad \text{on \; } \partial\Omega \,.
\end{align}
Notably, the problem has a closed-form solution,
\begin{equation}
u(x) = 2\frac{\left(1 - e^{(x - 1)/\epsilon}\right)}{\left(1 - e^{-2/\epsilon}\right)} + x - 1.
\end{equation}
This Dirichlet boundary value problem is of particular interest due to the
presence of a strong boundary layer at the right boundary that sharpens
as $\epsilon \to 0$, making it difficult to resolve without adaptivity.
Note that the $\epsilon = 0.005$ case was considered in
\cite{RobustVPINNs2024}, although in the context of a weak-form-based
variational PINN approach. 

We begin with the $\epsilon = 0.005$ case, with
\cref{fig:dls-1d-adv-aq-sol-0.005,fig:dls-1d-adv-aq-error-0.005}
showcasing an accurate approximation of the solution with a very low and
uniform error distribution, achieved with the adaptive quadrature strategy.
\Cref{fig:dls-1d-adv-aq-sol-0.005} also shows the final quadrature partition
distribution, displaying higher mesh density at the right boundary, where the
boundary layer is located.
\begin{figure}
    \centering
    \fbox{%
      \footnotesize
      \begin{tabular}{@{}c@{\hspace{0.9em}}c@{\hspace{1.4em}}c@{\hspace{0.9em}}c@{\hspace{1.4em}}c@{\hspace{0.9em}}c@{}}
        \tikz[baseline=-0.6ex]\draw[blue, line width=0.9pt] (0,0) -- (1.1em,0); &
        exact &
        \tikz[baseline=-0.6ex]\draw[orange, dashed, line width=0.9pt] (0,0) -- (1.1em,0); &
        approx &
        \tikz[baseline=-0.6ex]\draw[red, line width=0.9pt] (0,0) -- (1.1em,0); &
        error
      \end{tabular}%
    }

    \medskip

    \begin{subfigure}[t]{0.4\linewidth}
        \centering
        \includegraphics[width=\linewidth,trim={0 0 226 44},clip]{
            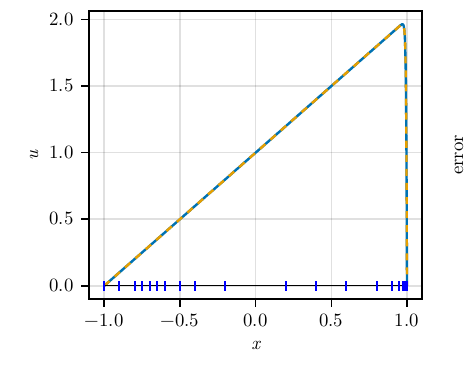
        }
        \caption{Exact and AQ solutions with final AQ partition.}
        \label{fig:dls-1d-adv-aq-sol-0.005}
    \end{subfigure}
    % \hfill
    \begin{subfigure}[t]{0.4\linewidth}
        \centering
        \includegraphics[width=\linewidth,trim={226 0 0 44},clip]{
            DLS-1D-Advection-Diffusion-BL-problem_aq-perf-study_aq-approx-plot-eps_0.005.pdf
        }
        \caption{AQ point-wise error.}
        \label{fig:dls-1d-adv-aq-error-0.005}
    \end{subfigure}

    \medskip

    \begin{subfigure}[t]{0.4\linewidth}
        \centering
        \includegraphics[width=\linewidth,trim={226 0 0 44},clip]{
            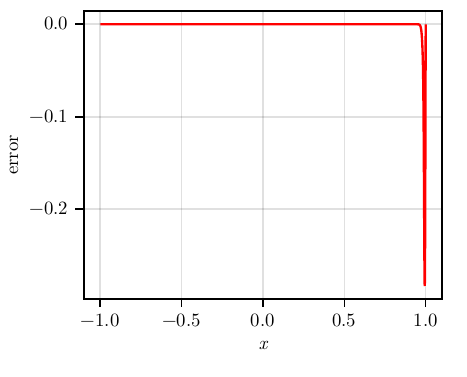
        }
        \caption{Uniform quadrature point-wise error.}
        \label{fig:dls-1d-adv-uq-error-0.005}
    \end{subfigure}
    % \hfill
    \begin{subfigure}[t]{0.4\linewidth}
        \centering
        \includegraphics[width=\linewidth,trim={226 0 0 44},clip]{
            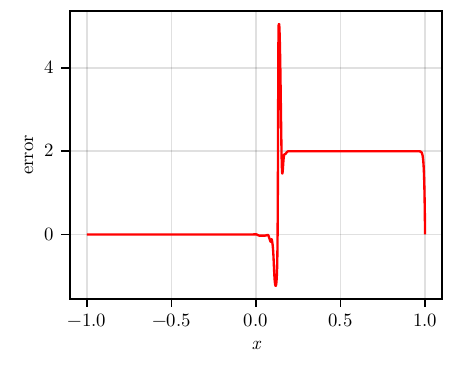
        }
        \caption{MC point-wise error.}
        \label{fig:dls-1d-adv-mc-error-0.005}
    \end{subfigure}
    \caption[Comparison of quadrature strategies for the 1D advection-diffusion problem with $\epsilon=0.005$]{Comparison of adaptive, uniform and MC quadrature for the
    1D advection-diffusion problem \protect\protect\eqref{eq:1d-adv-diff} with $\epsilon =
    0.005$.}
    \label{fig:dls-1d-adv-approx-0.005}
\end{figure}
The data efficiency of this approach is remarkable, requiring a
maximum of only $20$ partitions, amounting to just $140$ primal (training)
quadrature points in total. Only $6$ AQ refreshes were required within the
first $10\%$ of the full \num{5000} training epochs, with continued quadrature
error control throughout the entire training. In contrast,
the uniform composite quadrature, starting directly with the full
$20$ partitions, fails to capture the boundary layer accurately, as evident
from~\cref{fig:dls-1d-adv-uq-error-0.005}. Furthermore, \ac{mc} quadrature
with the full $140$ points results in localized overfitting (see
\cref{fig:dls-1d-adv-mc-error-0.005}), failing to even roughly capture the
solution. 

The adaptive quadrature strategy remains strikingly accurate even for the
difficult case of $\epsilon = 0.001$, maintaining very small $L^2$ and $H^1$
approximation errors over training; see
\cref{fig:dls-1d-adv-error-history-panel-0.001}. The final adaptive quadrature
mesh is even more concentrated at the
right boundary than in the $\epsilon = 0.005$ case shown in
\cref{fig:dls-1d-adv-approx-0.005}. The final refresh also slightly reduces the
number of mesh partitions while concentrating them predominantly near the right
boundary.
We re-emphasize the data-efficient nature of the adaptive quadrature strategy by
noting that a maximum of $25$ partitions (with the $90\%$ quantile being $23$
partitions) amounts to just $161$ primal (training) quadrature points in total.
We further highlight that only $7$ AQ refreshes were required within the first
$25\%$ of the full \num{5000} training epochs, maintaining continued
quadrature error control for the entire training thereafter.

In contrast, the uniform quadrature strategy leads to complete failure, as seen
in~\cref{fig:dls-1d-adv-loss-history-panel-0.001,fig:dls-1d-adv-error-history-panel-0.001}, by
seemingly overfitting to the right
Dirichlet boundary condition while complying with the convection part of the \ac{pde}
\protect\eqref{eq:1d-adv-diff} and ignoring the left Dirichlet boundary condition.
\ac{mc} quadrature exhibits similar behaviour to the $\epsilon = 0.005$
case; the corresponding plots are omitted for brevity because they look very
similar to those for the $\epsilon = 0.005$ case.
\begin{figure}
  \centering
  \begin{subfigure}[t]{1.0\linewidth}
    \centering
    \includegraphics[width=0.82\linewidth]{
        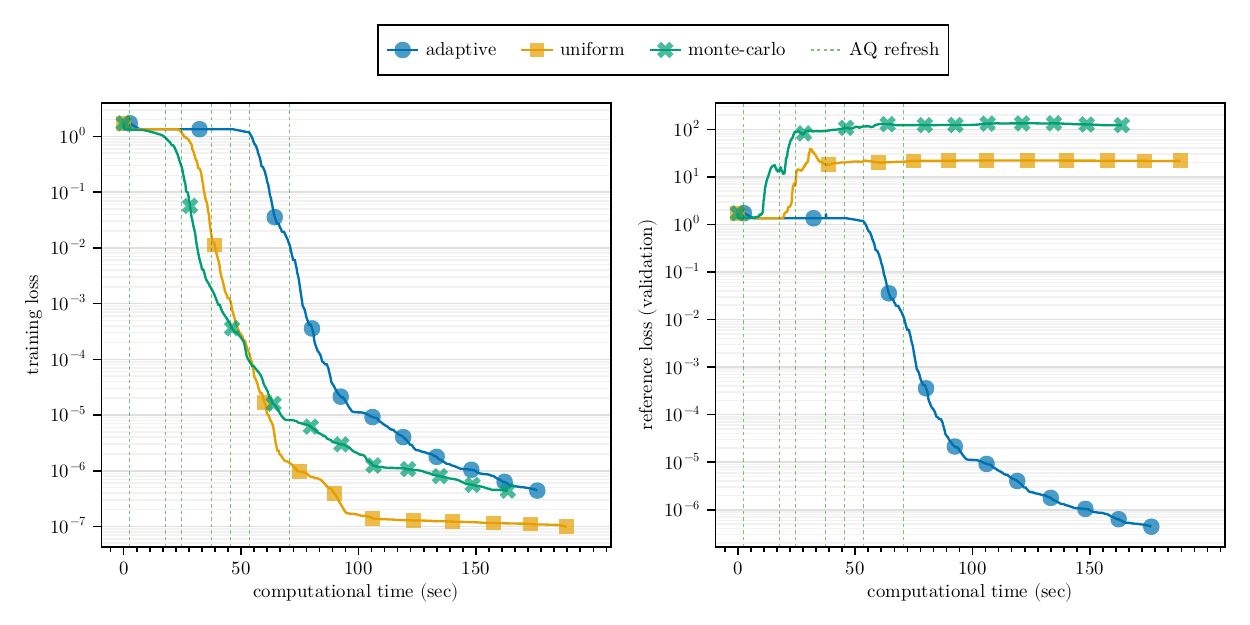
    }
    \caption{Training and reference loss history comparison across all
    considered quadrature strategies.}
    \label{fig:dls-1d-adv-loss-history-panel-0.001}
  \end{subfigure}
  \medskip
  \begin{subfigure}[t]{1.0\linewidth}
    \centering
    \includegraphics[width=0.82\linewidth,trim={0 0 0 40},clip]{
        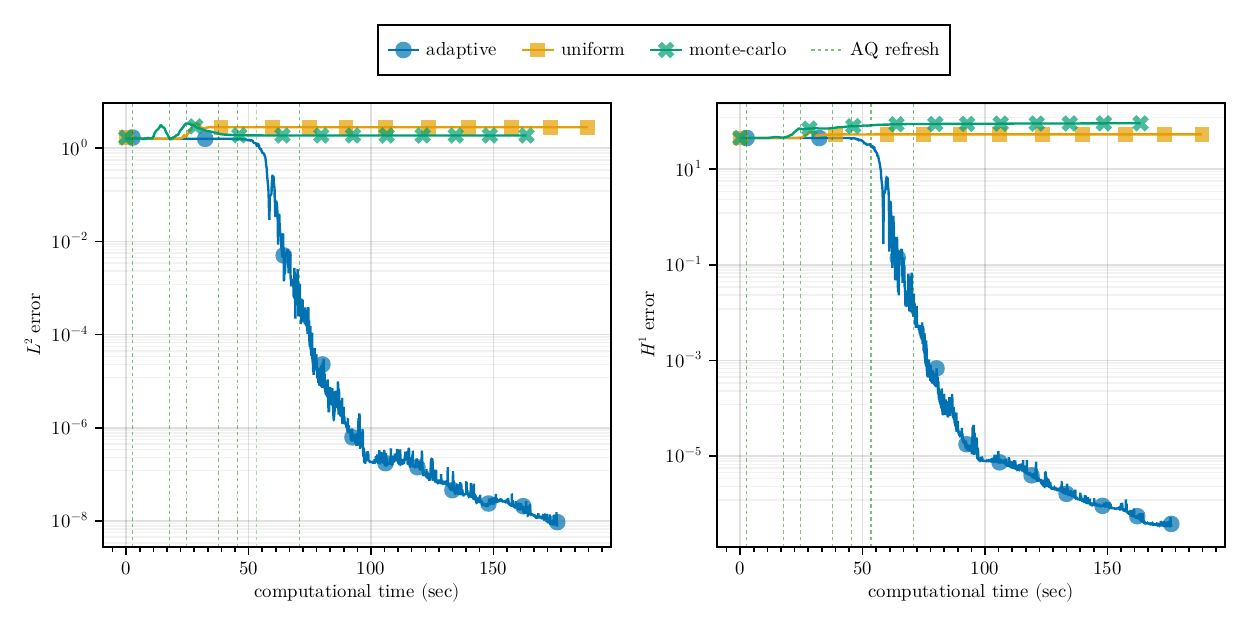
    }
    \caption{$L^2$ and $H^1$ error history comparison across all considered
    quadrature strategies.}
    \label{fig:dls-1d-adv-error-history-panel-0.001}
  \end{subfigure}
  \caption{Loss and error histories for the 1D advection-diffusion problem  \protect\protect\eqref{eq:1d-adv-diff} with $\epsilon=0.001$}  
  \label{fig:dls-1d-adv-loss-history-comparison-0.001}
\end{figure}
The loss histories in~\cref{fig:dls-1d-adv-loss-history-panel-0.001} highlight the
robustness of the adaptive quadrature strategy by showcasing a monotonic decrease
in the reference loss that is strongly correlated with the training
loss, thereby controlling overfitting that severely affects both uniform
quadrature and Monte Carlo quadrature strategies. Consequently, the $L^2$
and $H^1$ approximation errors in
\cref{fig:dls-1d-adv-error-history-panel-0.001} exhibit clear
superiority for the adaptive strategy compared to the meaningless results
obtained by the uniform and Monte Carlo strategies.
We conclude by stressing that the same hyperparameter choices, in particular
the \ac{nn} architecture, relative tolerance and refresh threshold, were
used for both $\epsilon = 0.005$ and $\epsilon = 0.001$. 

\subsection{The (1+1)D Viscous Burgers equation}
\label{subsec:burgers-problem}

Next, we consider the viscous Burgers' equation 
benchmark problem used in~\cite{PINNs2019,UrbanPINNOpt2025} with viscosity $0.01/\pi$, which exhibits shock formation. 
Although an exact solution
can be approximately constructed~\cite{PINNs2019}, we consider the discrete
approximate solution taken from~\cite{UrbanPINNOpt2025}, for fair comparison
with the state-of-the-art results reported therein. We emphasize that we use a \ac{nn} of
width 20 and depth 3 (with $941$ parameters), matching~\cite{UrbanPINNOpt2025}.
Notably, this is smaller than the \ac{nn} size ranges used in
\cite{KiyaniOpt2025}, where the smallest one has 20 neurons and 4 layers. We
also use the same number of training epochs as in~\cite{UrbanPINNOpt2025},
 taken to be \num{15000}.
\begin{figure}
  \centering
  \begin{subfigure}[t]{0.48\linewidth}
    \includegraphics[width=\linewidth]{
        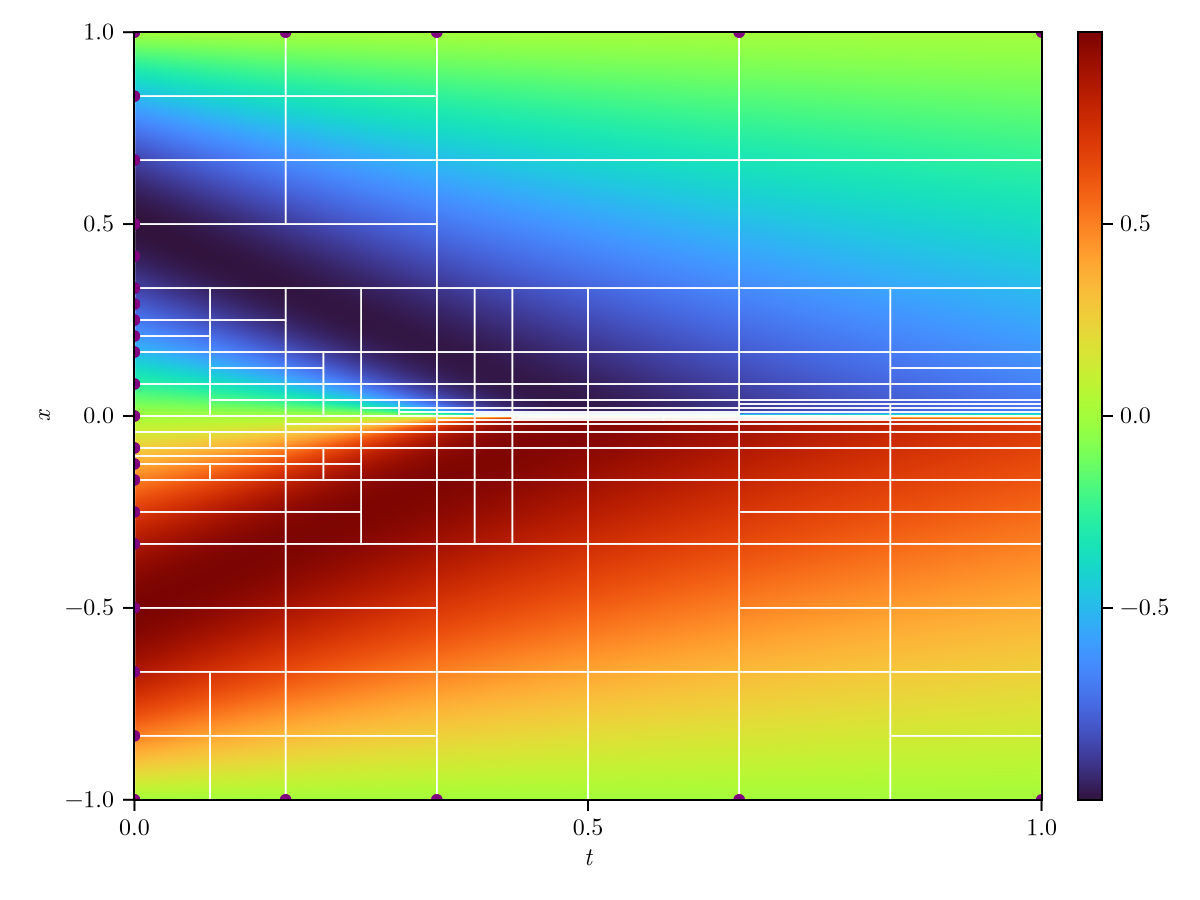
    }
    \caption{NN approximation with final AQ partition}
    \label{fig:burgers-aq-sol-mesh}
  \end{subfigure}
  \begin{subfigure}[t]{0.48\linewidth}
    \includegraphics[width=\linewidth]{
        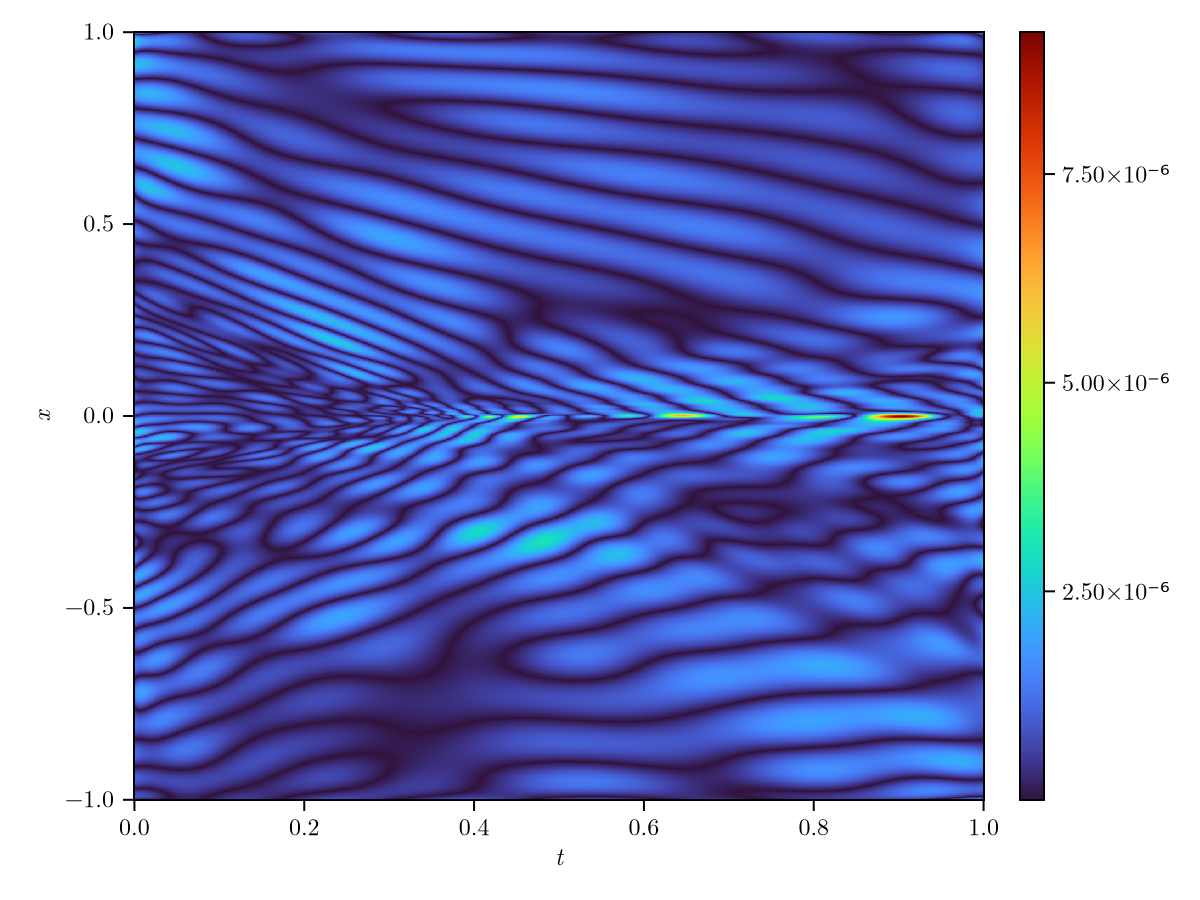
    }
    \caption{Absolute point-wise error}
    \label{fig:burgers-aq-errors}
  \end{subfigure}
  \caption{Adaptive quadrature solution, final adaptive partition and absolute point-wise errors for the viscous Burgers' problem using the AQ algorithm.} 
  \label{fig:burgers-aq-results}
\end{figure}

We begin the discussion with~\cref{fig:burgers-aq-results}, which demonstrates
the high accuracy of the solution obtained via the adaptive quadrature
strategy. Point-wise errors remain consistently low, even in the vicinity of
the shock. The final quadrature mesh successfully adapts to the shock's
progression, increasing partition density in the high-gradient region while
exhibiting clear anisotropy as the shock sharpens over time. Conversely, the
mesh remains sparse in regions away from the shock, particularly as time $t$
increases, which suggests near-optimal behaviour.

The point-wise errors in ~\cref{fig:burgers-aq-results} remain small across the domain, with no severe
high-frequency oscillations around the shock. Their maximum magnitude is about
half that reported in~\cite{UrbanPINNOpt2025}, and the final relative
$L^2$ error is $1.44 \times 10^{-6}$. This remains a substantial improvement,
especially because we enforce the boundary conditions through a penalty term
rather than the strong enforcement used in~\cite{UrbanPINNOpt2025,KiyaniOpt2025}. Furthermore, our
approach demonstrates a clear advantage in data efficiency: the maximum number
of training quadrature points used throughout the simulation was \num{8134}
(equivalent to $166$ partitions across the full space-time domain, achieved
with only $18$ \ac{aq} refreshes). By contrast, the setup in
\cite{UrbanPINNOpt2025} uses \num{10000} \ac{mc} points, which were resampled every \num{500}
epochs.
\begin{figure}
  \centering
  \begin{subfigure}[t]{1.0\linewidth}
    \centering
    \includegraphics[width=0.6\linewidth]{
        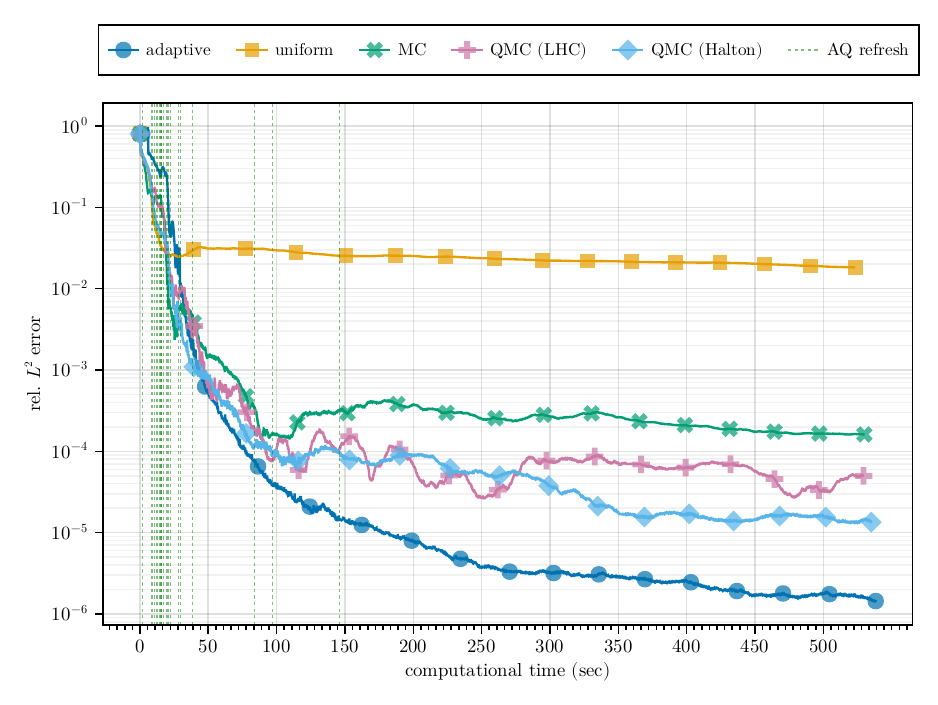
    }
    \caption{Relative $L^2$ error history comparison across all considered quadrature strategies.}
    \label{fig:burgers-error-history-panel}
  \end{subfigure}
  \medskip
  \begin{subfigure}[t]{1.0\linewidth}
    \centering
    \includegraphics[width=0.82\linewidth,trim={0 0 0 44},clip]{
        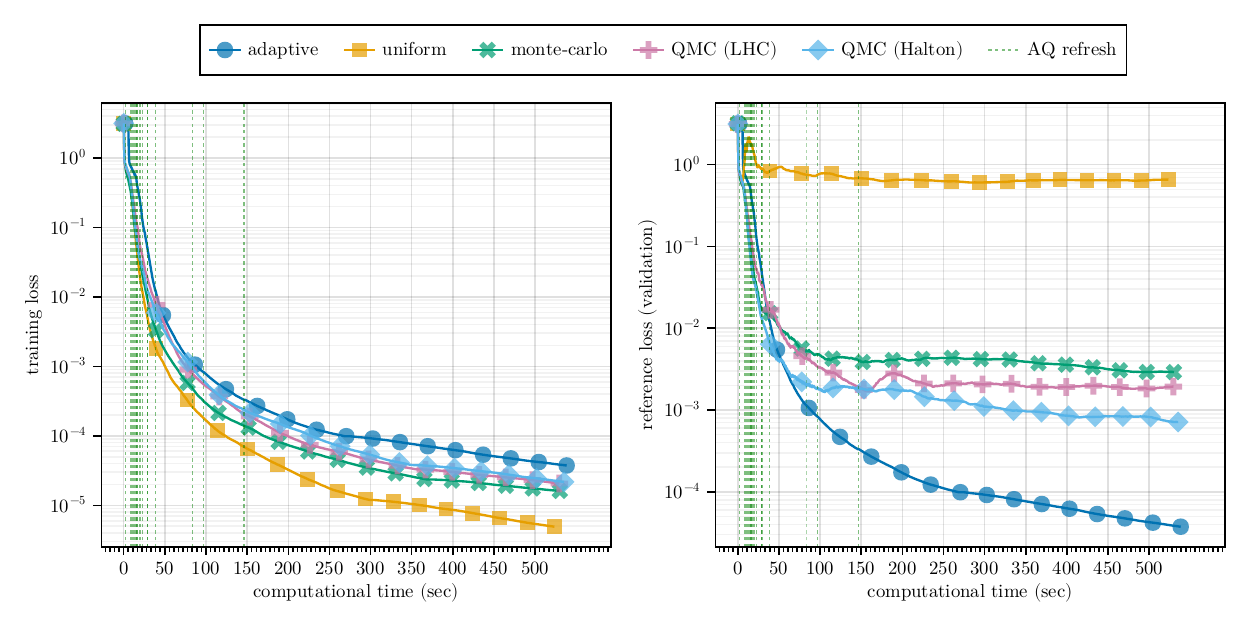
    }
    \caption{Training and reference loss history comparison across all considered quadrature strategies.}
    \label{fig:burgers-loss-history-panel}
  \end{subfigure}
  \caption{Loss and error histories for the viscous Burgers' problem.}
  \label{fig:burgers-error-history-comparison}
\end{figure}

A comparison of the considered quadrature strategies is presented in
\cref{fig:burgers-error-history-panel}. For the same total computational
expenditure, near the final stage where other approaches stagnate, our adaptive
quadrature strategy achieves a relative $L^2$ error that is an order of
magnitude lower—and continuing to improve—than the next best method, \ac{qmc}
(Halton) quadrature. The training and reference losses remain well aligned in
\cref{fig:burgers-loss-history-panel}, indicating good generalisation
throughout training. The contrast between the training and reference loss
histories also clearly illustrates the effect of \emph{overfitting}.
Notably, the strong correlation between the reference loss and the relative
$L^2$ error throughout the training process underscores the practical
importance of the reference loss; it serves not only to identify quadrature
insufficiency but also as a reliable metric for assessing overall convergence.
Furthermore, it is noteworthy that uniform quadrature exhibits the poorest
performance, suffering from overfitting and stagnating in accuracy very early
in the training process. This behaviour is consistent with the 2D
function-approximation benchmark in~\Cref{subsec:fa-2d-problem}, which also
features a sharp target profile. As in that case, \ac{qmc} (Halton) remains
the most effective non-adaptive strategy.

In summary, the proposed adaptive quadrature strategy demonstrates a clear
superiority over traditional static and quasi-random approaches, achieving an
order of magnitude higher precision while maintaining significantly greater
data efficiency.

\subsection{The (1+1)D Korteweg-De Vries (KdV) equation}
\label{subsec:kdv-problem}

Next, we consider the Korteweg-De Vries (KdV) equation
\begin{align} \label{eq:kdv-problem}
\alpha\frac{\partial u}{\partial t} + \beta u \frac{\partial u}{\partial x} + \gamma \frac{\partial^3 u}{\partial x^3} &= 0, %\quad x \in [0,L], \; t \in [0,5],\\
\end{align}
with the domain and boundary conditions specified in
\cite{UrbanPINNOpt2025}. This problem exhibits nonlinear dispersive
behaviour through a convection term similar to that of the Burgers equation,
combined with a third-order term that introduces dispersion. Because it lacks a
viscous second-order term, the problem provides a demanding test for numerical
solvers. 
The
system yields a complex solution characterised by the interaction of two
solitons, making it a demanding approximation problem. An analytical solution
is also available, as constructed in~\cite{UrbanPINNOpt2025}, which makes
the benchmark especially useful.
\begin{figure}
  \centering
  \begin{subfigure}[t]{0.48\linewidth}
    \includegraphics[width=\linewidth]{
        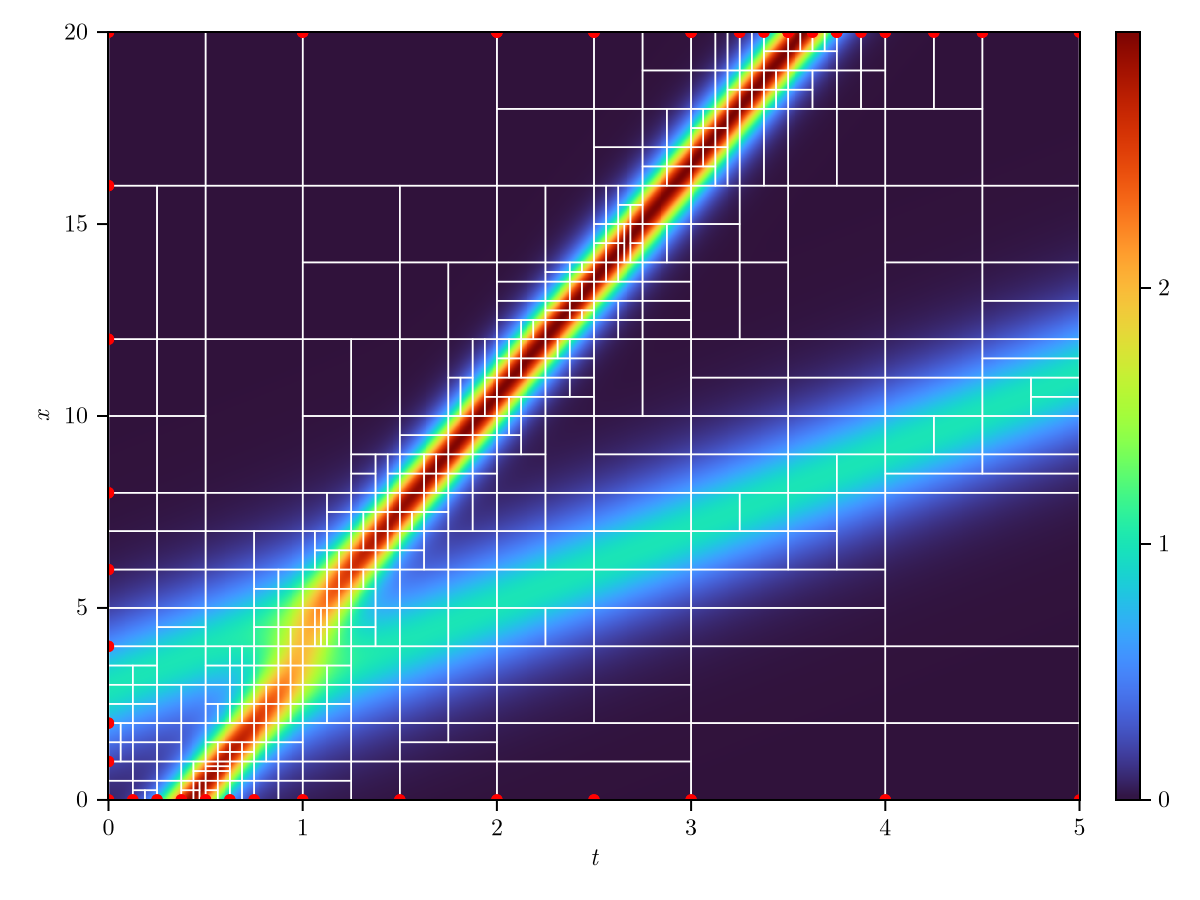
    }
    \caption{NN approximation with final AQ partition}
    \label{fig:kdv-aq-sol-mesh}
  \end{subfigure}
  \begin{subfigure}[t]{0.48\linewidth}
    \includegraphics[width=\linewidth]{
        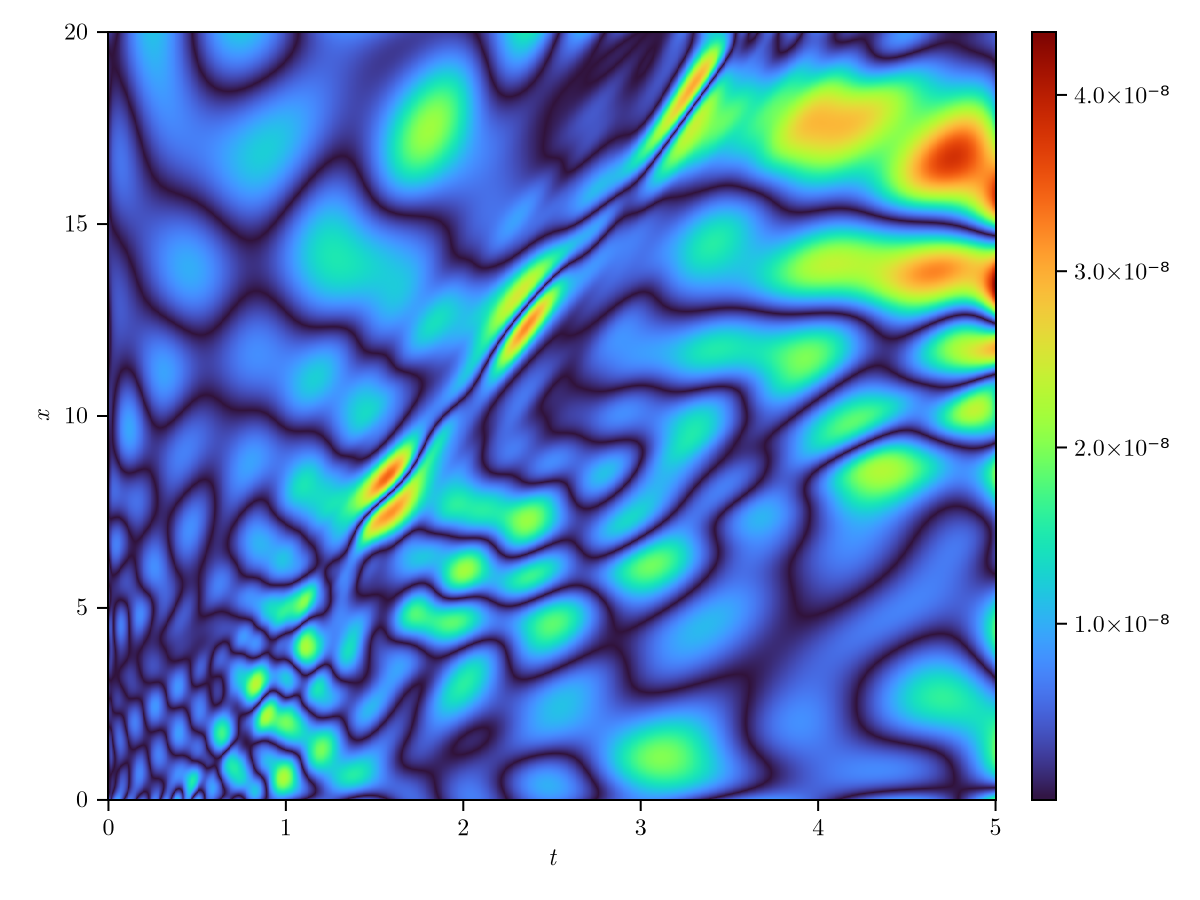
    }
    \caption{Absolute point-wise errors}
    \label{fig:kdv-aq-errors}
  \end{subfigure}
  \caption{Adaptive quadrature solution, final adaptive partition and absolute point-wise errors for the (1+1)D Korteweg-De Vries (KdV) problem
       ~\eqref{eq:kdv-problem} using the AQ algorithm.} 
  \label{fig:kdv-aq-results}
\end{figure}
Using the same \ac{nn} architecture (3 layers with 30 neurons) as in
\cite{UrbanPINNOpt2025} and training for only \num{20000} SSBroyden
optimiser epochs—omitting the Adam pre-training used in
\cite{UrbanPINNOpt2025}—we obtain an excellent approximation of the target
solution. 

As shown in~\Cref{fig:kdv-aq-sol-mesh}, the interaction of the waves
is captured particularly well, with the final adaptive quadrature mesh closely
tracking the solution profile. Supporting this, the maximum point-wise errors
(\Cref{fig:kdv-aq-errors}) are approximately two orders of magnitude lower than
those reported in~\cite[Fig. 15]{UrbanPINNOpt2025}. The final AQ partition is
also consistent with refinement first concentrating on the stronger soliton and
then resolving the weaker interaction region.
Furthermore, the maximum number of partitions reached is $452$, totalling
\num{11300} primal quadrature points, and this required only six AQ refreshes
from an initial $5 \times 5$ mesh. The final relative
$L^2$ and $H^1$ errors are $1.54 \times 10^{-8}$ and $1.85 \times 10^{-8}$,
respectively, with a total runtime of \num{1203} seconds. By comparison,
\cite[Table 5]{UrbanPINNOpt2025} reported a relative $L^2$ error of
approximately $6 \times 10^{-6}$ using \num{15000} MC points resampled every
500 epochs, with even larger errors cited in the references therein. These
results again highlight the superior performance and quadrature efficiency of
the adaptive strategy.
\begin{figure}
  \centering
  \begin{subfigure}[t]{1.0\linewidth}
    \centering
    \includegraphics[width=0.82\linewidth]{
        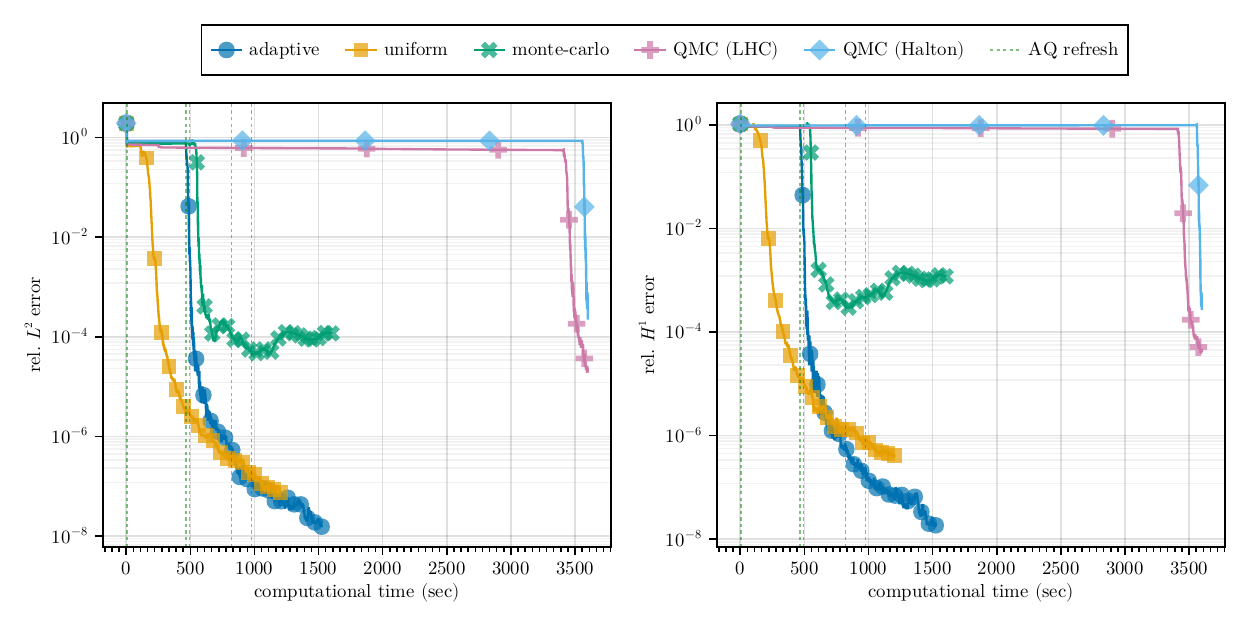
    }
    \caption{$L^2$ and $H^1$ error history comparison across all considered quadrature strategies.}
    \label{fig:kdv-error-history-panel}
  \end{subfigure}
  \medskip
  \begin{subfigure}[t]{1.0\linewidth}
    \centering
    \includegraphics[width=0.82\linewidth,trim={0 0 0 38},clip]{
        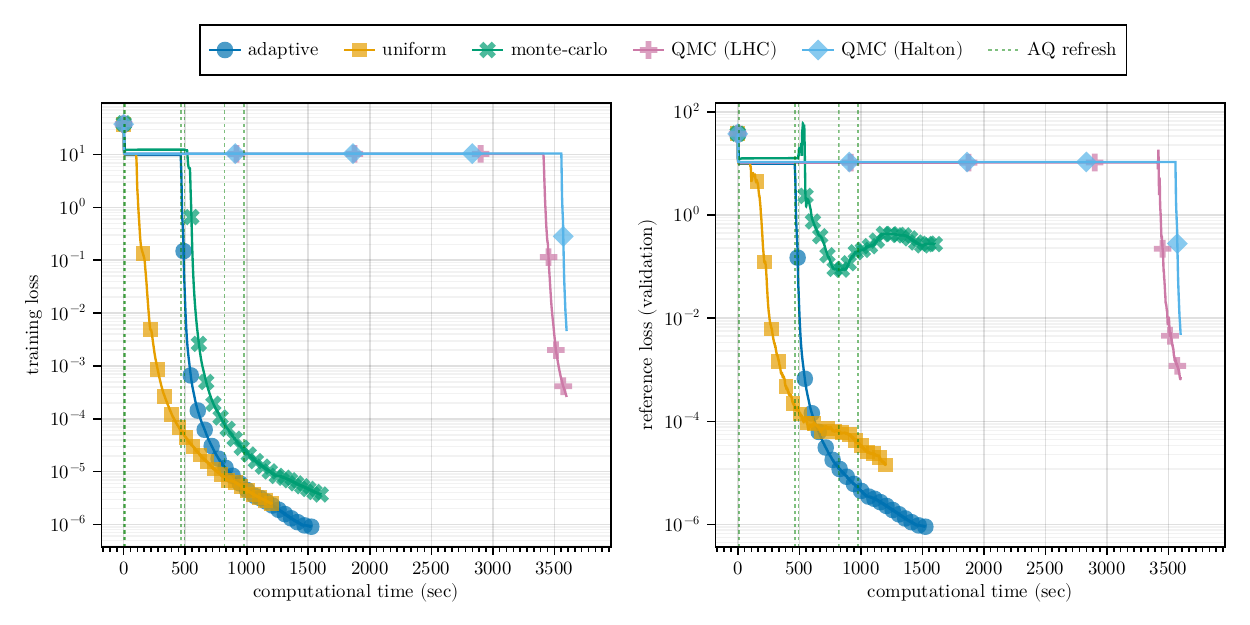
    }
    \caption{Training and reference loss history comparison across all considered quadrature strategies.}
    \label{fig:kdv-loss-history-panel}
  \end{subfigure}
  \caption{Loss and error histories for the 
  (1+1)D Korteweg-De Vries (KdV) problem.}
  \label{fig:kdv-error-history-comparison}
\end{figure}

A comparison of the histories in~\Cref{fig:kdv-error-history-panel,fig:kdv-loss-history-panel} shows that
the adaptive strategy spends
substantial wall-clock time during the first \num{1000} epochs. This is driven by larger
line-search counts per epoch, likely caused by the difficult early loss
landscape induced by the penalty treatment of the boundary conditions. The AQ
refreshes then help the optimiser leave this stagnant phase, after which the
errors and reference loss improve rapidly and monotonically. Moreover, only the
\ac{aq} strategy exhibits training and reference losses that match in
\Cref{fig:kdv-loss-history-panel}. Uniform quadrature exits this initial stage
earlier and attains lower early-time errors in
\Cref{fig:kdv-error-history-panel}, but this advantage is short-lived. Without
adaptation, the generalisation gap widens and the final errors remain above
those of AQ. In contrast, the Monte Carlo
(\ac{mc}) strategy undergoes a similarly slow initial phase, then improves
nonlinearly, first overfitting and only later recovering some
generalisation. Its progress remains markedly non-monotonic and it ends with
the largest final errors among the tested methods, suggesting convergence to a
suboptimal local minimum. The behaviour of both \ac{qmc} variants is more
extremely still: they remain in a slow-convergence regime for much longer before
their late error reduction near the \num{3600}-second runtime limit, as shown
in~\Cref{fig:kdv-error-history-panel,fig:kdv-loss-history-panel}.

Overall, the adaptive quadrature strategy achieves a superior approximation
compared to the next best result, which was attained by uniform quadrature. In
particular, the disparity in the relative $H^1$ errors is significant,
suggesting that the adaptive approach provides superior generalisation to the
exact solution. The marked improvement in $H^1$ accuracy, which accounts for
derivatives of the solution, underscores the ability of adaptive quadrature to
capture the underlying physics of the soliton interactions more faithfully than
static sampling methods.

\subsection{The (1+1)D Cahn-Hilliard problem}

Next, we consider the fourth-order Cahn-Hilliard equation, which models phase
separation in a binary fluid mixture. The solution exhibits multiple sharp
phase transitions and is generally more challenging computationally than
second-order phase-field models, primarily because of the fourth-order spatial
derivatives. We adopt a benchmark problem from
\cite{Wight2021AdaptiveCH}, where the
scalar field $u$ represents the relative concentrations of the binary
components. The problem is defined by the following governing equations,
supplemented with initial and periodic boundary conditions:
\begin{align} \label{eq:cahn-hilliard-problem}
u_t - (\gamma_2(u^3 - u) - \gamma_1 u_{xx})_{xx} &= 0, \quad x \in [-1,1], \; t \in [0,1],\\
u(0,x) &= - \cos(2\pi x), \\
u(t,-1) &= u(t,1), \\
u_x(t,-1) &= u_x(t,1),
\end{align}
where the parameters $\gamma_2 = 0.01$ and $\gamma_1 = 10^{-6}$.

For this problem, we employ a 4-layer \ac{nn} with 50 neurons per
hidden layer, a significantly narrower architecture than the
128-neuron-per-layer model used in~\cite{Wight2021AdaptiveCH}. Moreover,
while the approach in~\cite{Wight2021AdaptiveCH} uses curriculum learning together with the
L-BFGS optimiser, we adopt a full-batch space-time formulation solved with
SSBroyden.
\begin{figure}
  \centering
  \begin{subfigure}[t]{0.48\linewidth}
    \includegraphics[width=\linewidth]{
        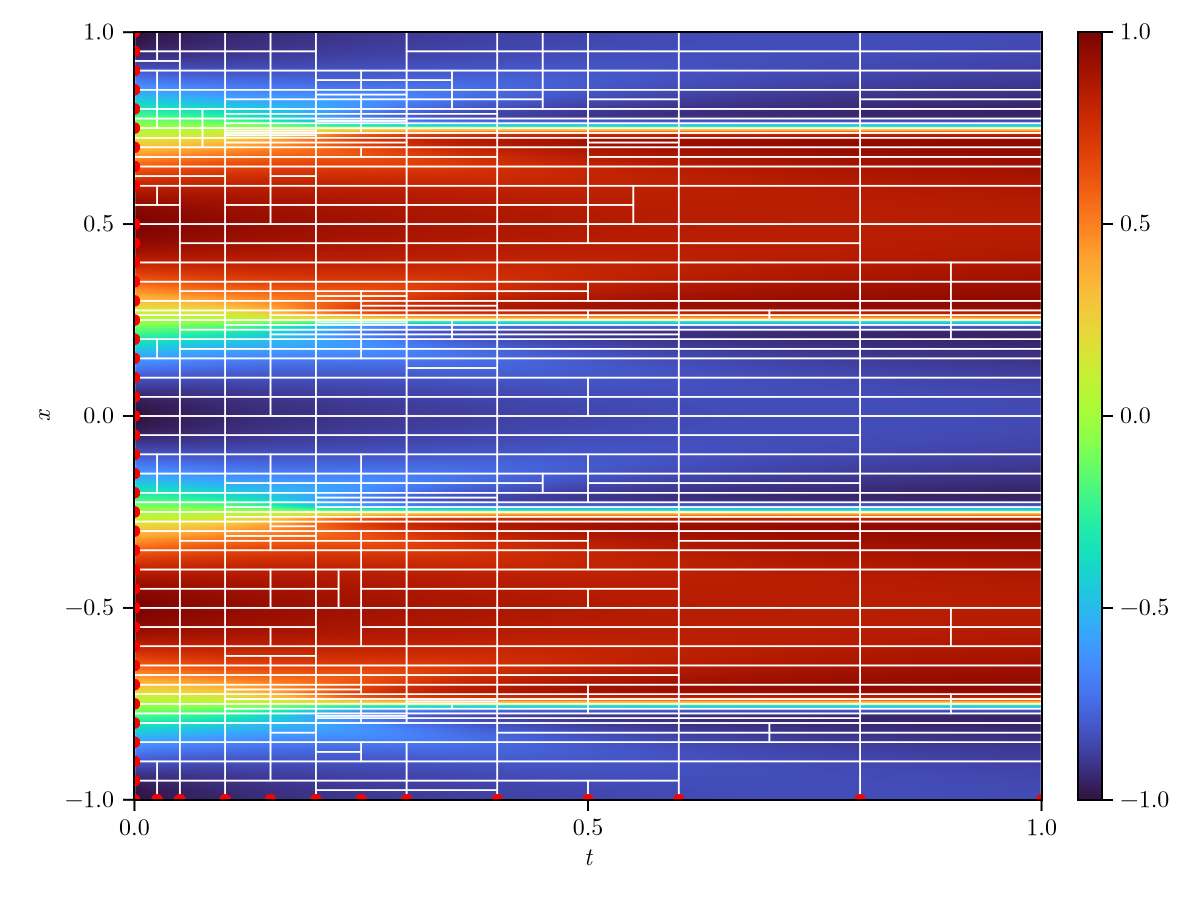
    }
    \caption{NN approximation with final AQ partition}
    \label{fig:cahn-hilliard-aq-sol-mesh}
  \end{subfigure}
  \begin{subfigure}[t]{0.48\linewidth}
    \includegraphics[width=\linewidth]{
        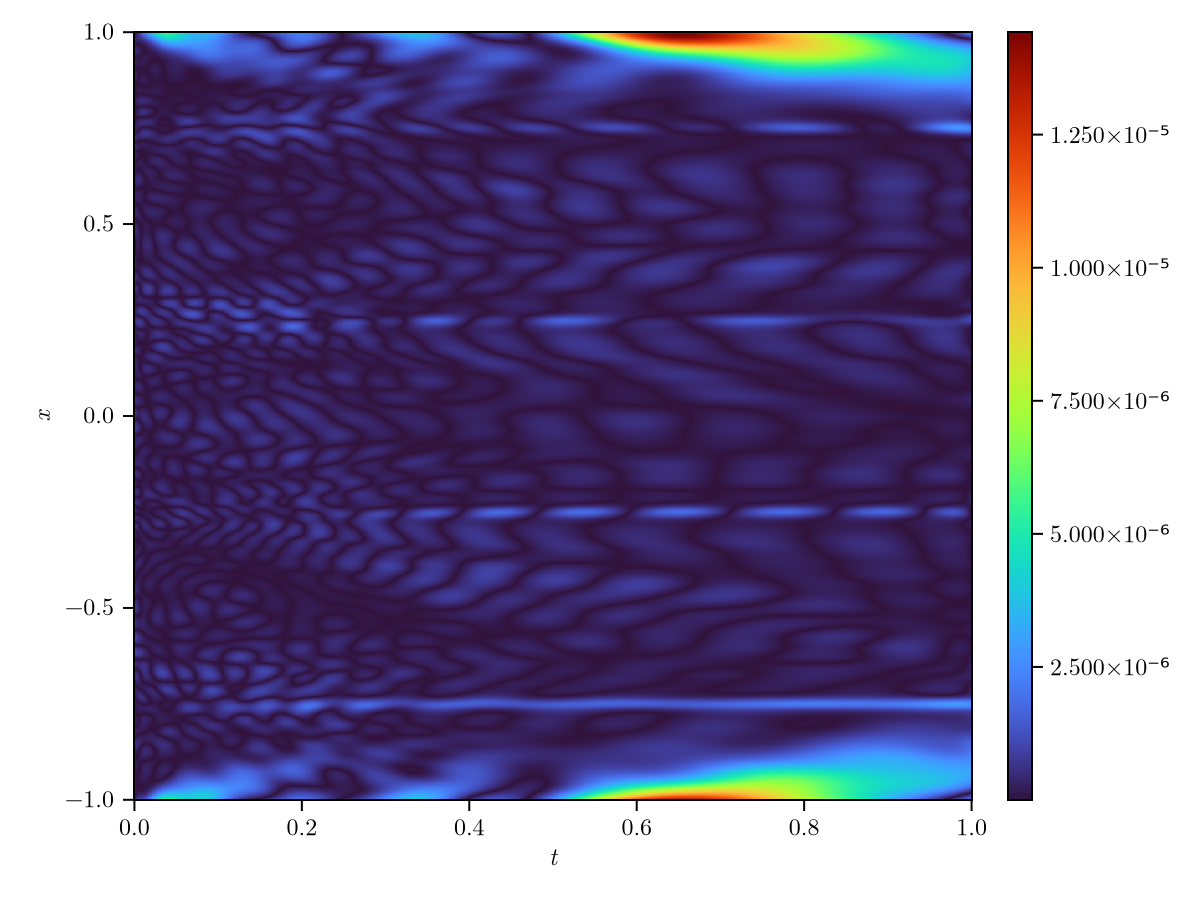
    }
    \caption{Absolute point-wise errors}
    \label{fig:cahn-hilliard-aq-errors}
  \end{subfigure}
  \caption{
    Adaptive quadrature solution, final adaptive partition and absolute point-wise errors for 
        the Cahn-Hilliard problem~\eqref{eq:cahn-hilliard-problem}
         using the AQ algorithm.} 
  \label{fig:cahn-hilliard-aq-results}
\end{figure}
The point-wise errors presented in~\Cref{fig:cahn-hilliard-aq-errors}
demonstrate the reliability of the approximation achieved by the adaptive
quadrature (AQ) strategy. As shown in~\Cref{fig:cahn-hilliard-aq-sol-mesh}, the
model captures the sharp phase separation with high fidelity. Notably, the
final AQ partition—obtained through 15 adaptation cycles starting from a $5 \times
5$ uniform grid—exhibits particularly high-aspect-ratio (thin) cells. This
anisotropy underscores the algorithm's focus on spatial refinement at sharp phase transitions, while largely bypassing temporal
adaptivity due to the relatively stable dynamics along the time axis.
\begin{figure}
  \centering
  \begin{subfigure}[t]{1.0\linewidth}
    \centering
    \includegraphics[width=0.5\linewidth]{
        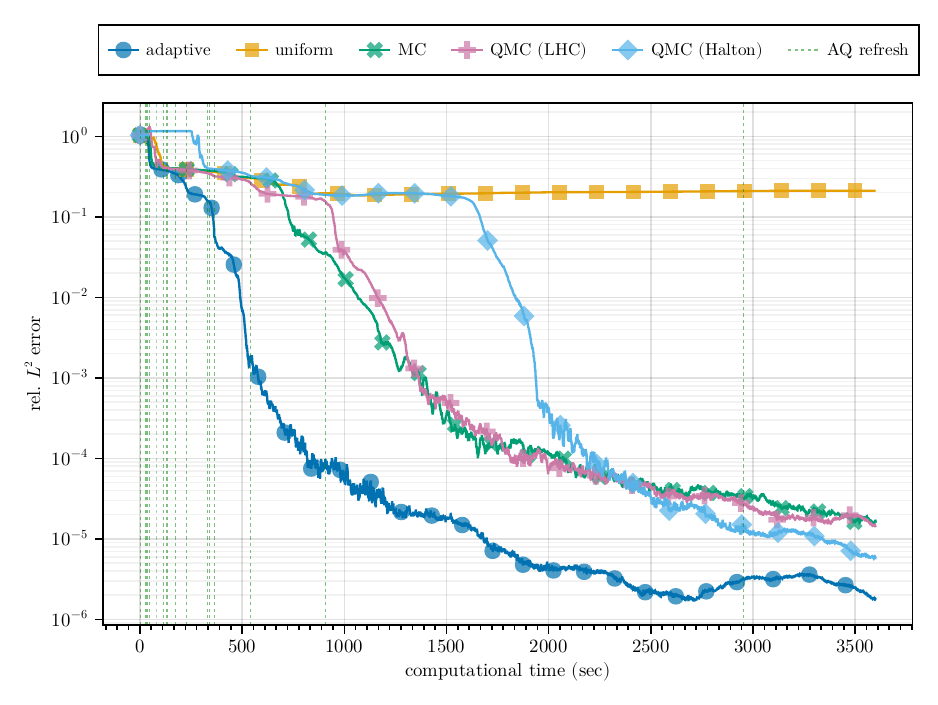
    }
    \caption{$L^2$ and $H^1$ error history comparison across all considered quadrature strategies.}
    \label{fig:cahn-hilliard-error-history-panel}
  \end{subfigure}
  \medskip
  \begin{subfigure}[t]{1.0\linewidth}
    \centering
    \includegraphics[width=0.82\linewidth,trim={0 0 0 38},clip]{
        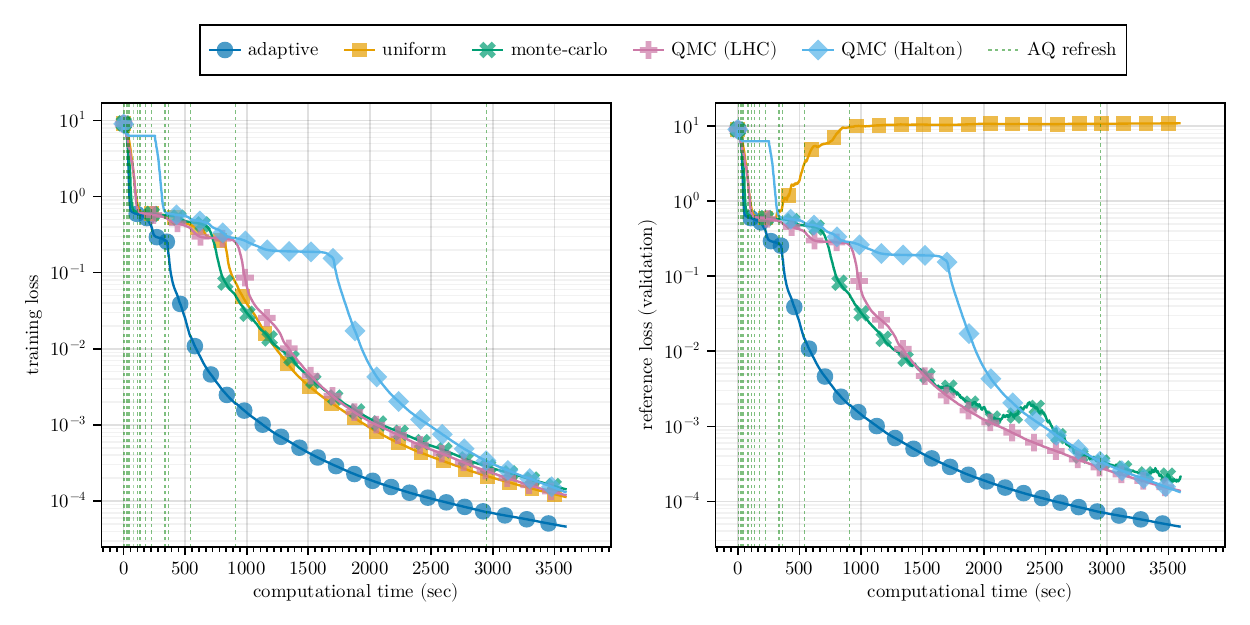
    }
    \caption{Training and reference loss history comparison across all considered quadrature strategies.}
    \label{fig:cahn-hilliard-loss-history-panel}
  \end{subfigure}
  \caption{
    Loss and error histories for the 
  Cahn-Hilliard problem~\eqref{eq:cahn-hilliard-problem}.}
  \label{fig:cahn-hilliard-error-history-comparison}
\end{figure}

A unified comparison of the various quadrature strategies, shown through the
relative error histories in~\Cref{fig:cahn-hilliard-error-history-panel} and
the loss histories in~\Cref{fig:cahn-hilliard-loss-history-panel},
demonstrates that the adaptive quadrature (AQ) strategy, supported by frequent
refreshes, successfully bypasses the stagnation phases encountered by the
other methods. Consequently, it achieves superior relative $L^2$ accuracy
significantly earlier in training.
With the exception of uniform quadrature, the other strategies show relatively
close alignment between training and reference losses. This does not,
however, translate into equally accurate solutions. The adaptive run reaches a
maximum of $695$ partitions, equivalent to \num{34055} primal quadrature
points, and still achieves substantially lower errors than the alternatives,
whereas uniform quadrature eventually succumbs to irreversible overfitting.

Here too, the reference loss history is consistent with the training loss but
not so strongly correlated with the relative $L^2$ error history. This
discrepancy is likely attributable to the penalty-based enforcement of
periodic boundary conditions.

\subsection{The 2D arc wavefront problem with sharp gradients}
\label{sec:poisson-aq}

We consider a Poisson problem with 
with forcing term and Dirichlet boundary conditions 
such that the solution of the problem is the manufactured solution
\begin{equation}\label{eq:poisson-exact-solution}
u(x,y) = \mathrm{atan}\left(100\left(\sqrt{(x + 0.05)^2 + (y + 0.05)^2} - 0.7\right)\right),
\end{equation}
 which
displays sharp gradients along an arc wavefront. The problem is
defined on the unit square domain $\Omega = [0,1]^2$. This problem is drawn
from the NIST-\ac{amr} benchmark collection~\cite{NISTAMR}, which is designed
to evaluate adaptive mesh refinement (\ac{amr}) strategies. It has also been
used in recent work on adaptive
finite element interpolated \acp{nn}~\cite{AdaptiveFEINNs2024}.
\begin{figure}
  \centering
  \begin{subfigure}[t]{0.48\linewidth}
    \includegraphics[width=\linewidth]{
        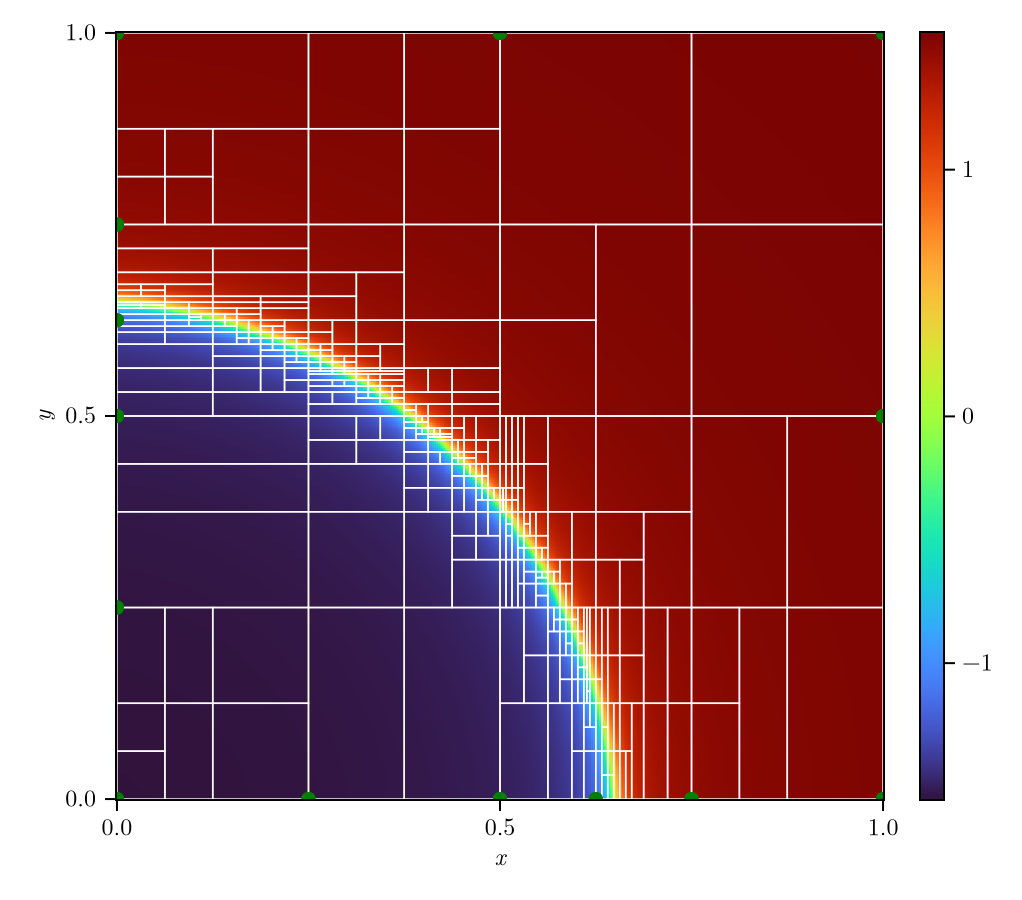
    }
    \caption{NN approximation with final AQ partition}
    \label{fig:poisson-aq-sol-mesh}
  \end{subfigure}
  \begin{subfigure}[t]{0.48\linewidth}
    \includegraphics[width=\linewidth]{
        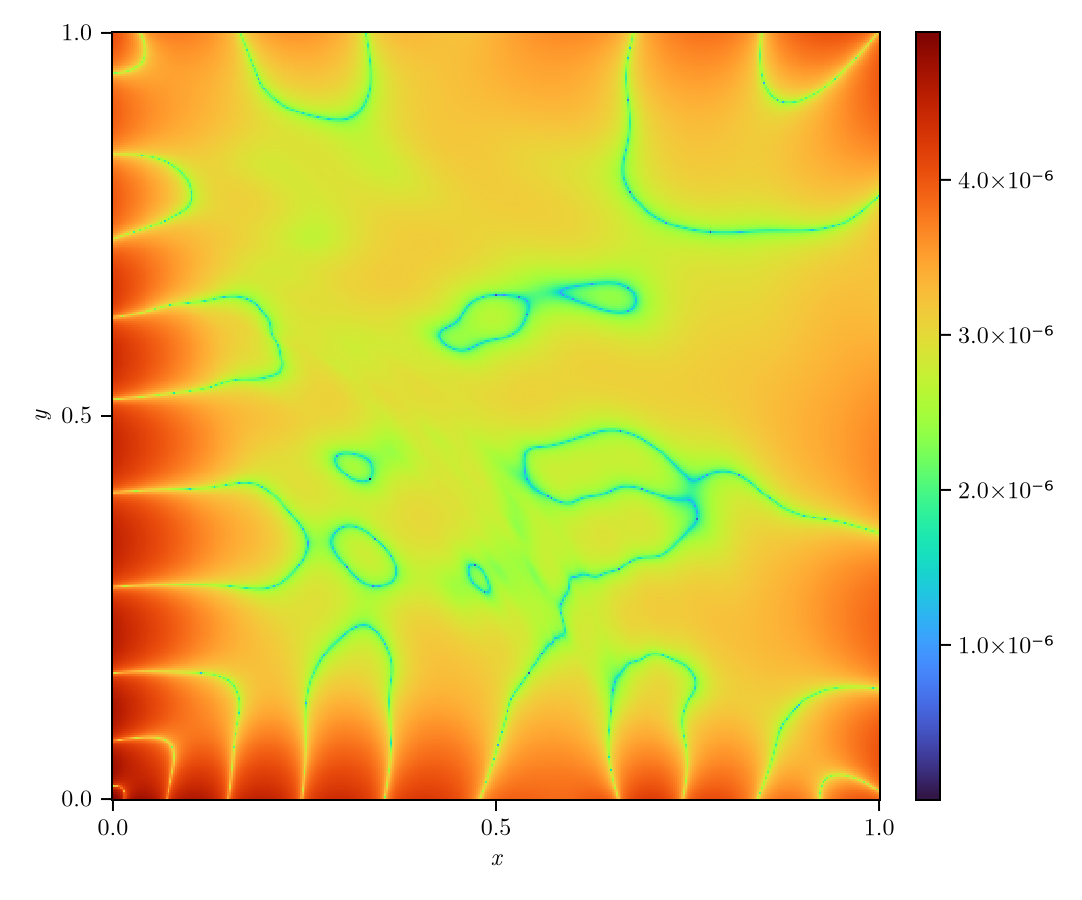
    }
    \caption{Absolute point-wise errors}
    \label{fig:poisson-aq-errors}
  \end{subfigure}
  \caption{Adaptive quadrature solution, final adaptive partition and absolute point-wise errors for 
        the 2D arc wavefront Poisson problem using the AQ algorithm.} 
  \label{fig:poisson-aq-results}
\end{figure}
For this problem, we employ a quadrature order combination of $(7,10)$, using
a base mesh that covers the entire domain without initial partitioning. The
\ac{aq} algorithm is configured with tolerances of
$\texttt{rtol} = 10^{-3}$ and $\texttt{refresh\_tol} = 10^{-2}$. The trial
network architecture comprises 4 hidden layers, each with a width of 50
neurons and the $\tanh$ activation function. The model is trained for
\num{15000} epochs with a fixed Dirichlet penalty $\gamma_D = 10.0$.

We begin with~\Cref{fig:poisson-aq-errors}, which shows remarkably low
point-wise errors that remain nearly uniform across the domain. Errors increase
slightly near the boundaries, reflecting the intrinsic difficulty of capturing
boundary data. The figure also highlights the ability of the $h$-adaptive
process to resolve the high-gradient region efficiently, capturing the
anisotropy of the sharp arc and concentrating effort where the solution is
less regular. The adaptive run requires only 11 refreshes, all within the first
$\num{5100}$ epochs. After that, the quadrature stabilises and no further
refinement is needed to maintain the prescribed tolerances.
\begin{figure}
  \centering
  \begin{subfigure}[t]{1.0\linewidth}
    \centering
    \includegraphics[width=0.82\linewidth]{
        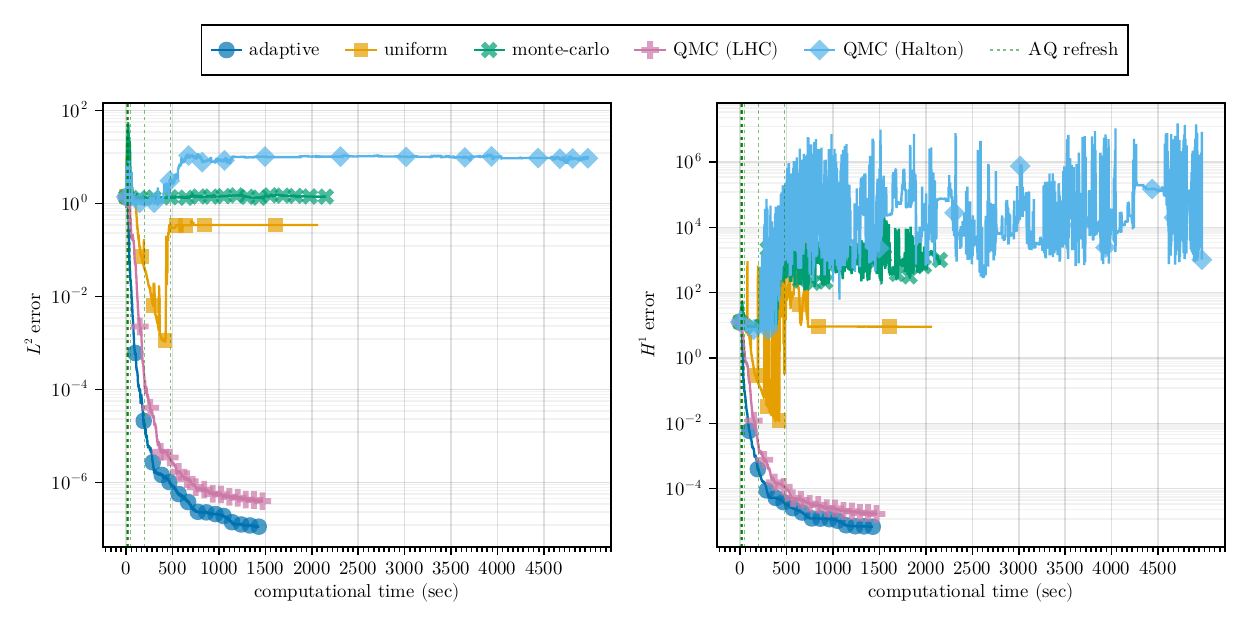
    }
    \caption{$L^2$ and $H^1$ error history comparison across all considered quadrature strategies.}
    \label{fig:poisson-error-history-panel}
  \end{subfigure}
  \medskip
  \begin{subfigure}[t]{1.0\linewidth}
    \centering
    \includegraphics[width=0.82\linewidth,trim={0 0 0 38},clip]{
        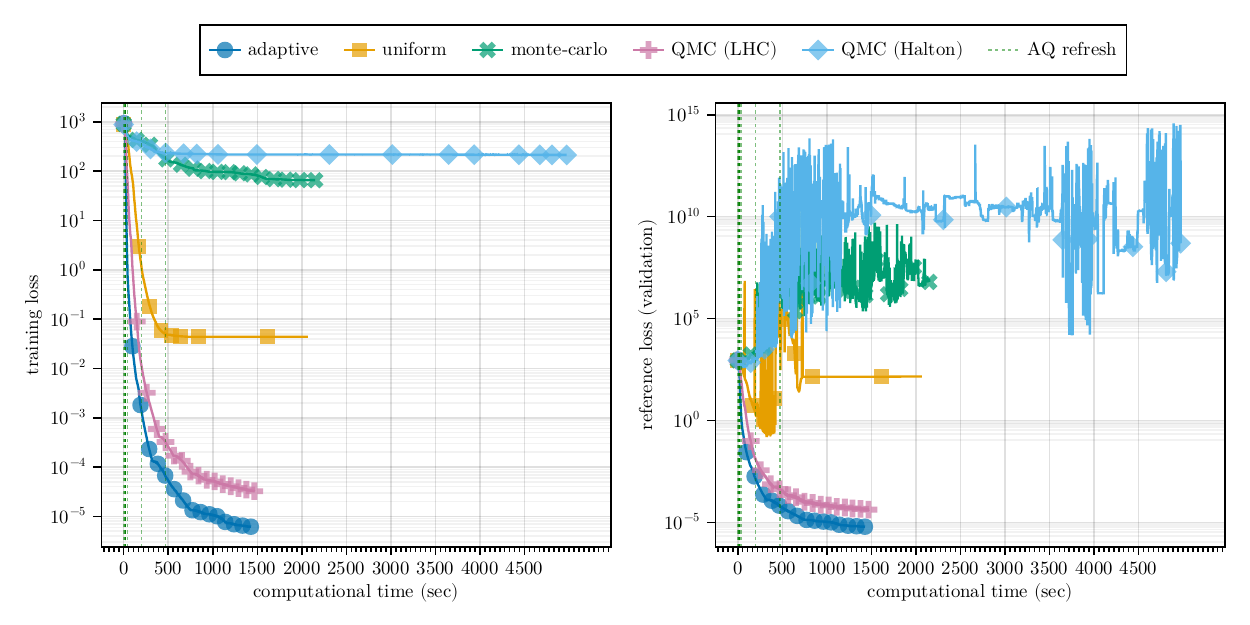
    }
    \caption{Training and reference loss history comparison across all considered quadrature strategies.}
    \label{fig:poisson-loss-history-panel}
  \end{subfigure}
  \caption{Loss and error histories for the 2D arc wavefront 
  diffusion problem.}
  \label{fig:poisson-error-history-comparison}
\end{figure}
Finally, a comparison across the various quadrature strategies in
\Cref{fig:poisson-error-history-panel} shows that the \ac{aq} strategy
significantly outperforms uniform, \ac{mc}, and \ac{qmc} (Halton)
quadratures. These alternative schemes exhibit clear overfitting, while their
trajectories suggest a highly non-convex or difficult loss landscape, a point
reinforced by the loss histories in~\Cref{fig:poisson-loss-history-panel} and
their longer run times. In contrast, the \ac{qmc} (\ac{lhc}) approach
remains competitive, yielding error levels only slightly above the \ac{aq}
case within a similar temporal budget.

\subsection{The 2D L-shaped Poisson problem with corner singularity}
\label{sec:lshaped-aq}

We next consider a Poisson problem with Dirichlet boundary conditions on the
2D non-convex L-shaped domain $\Omega = [-1,1]^2\backslash[-1,0]^2$, together
with the manufactured solution
\begin{equation}\label{eq:lshaped-exact-solution}
u(x) = r^{\frac{2}{3}} \sin(2 \theta / 3 + \pi / 3)
\end{equation}
which determines the boundary data and the source term. Although $u$
belongs only to $H^{3/2 - \epsilon} (\Omega)$, the source term vanishes in
$\Omega$, so the primal deep least squares formulation remains well posed.

For this experiment, we adopt a configuration close to that of the previous
problem (\Cref{sec:poisson-aq}). The main difference is the network
architecture, which now has 5 hidden layers while maintaining a width of 50
neurons. The \ac{aq} hyperparameters remain unchanged, but the initial base
mesh is partitioned into 3 blocks aligned with the L-shaped geometry. Training
is run for \num{10000} iterations.
\begin{figure}
  \centering
  \begin{subfigure}[t]{0.48\linewidth}
    \includegraphics[width=\linewidth]{
        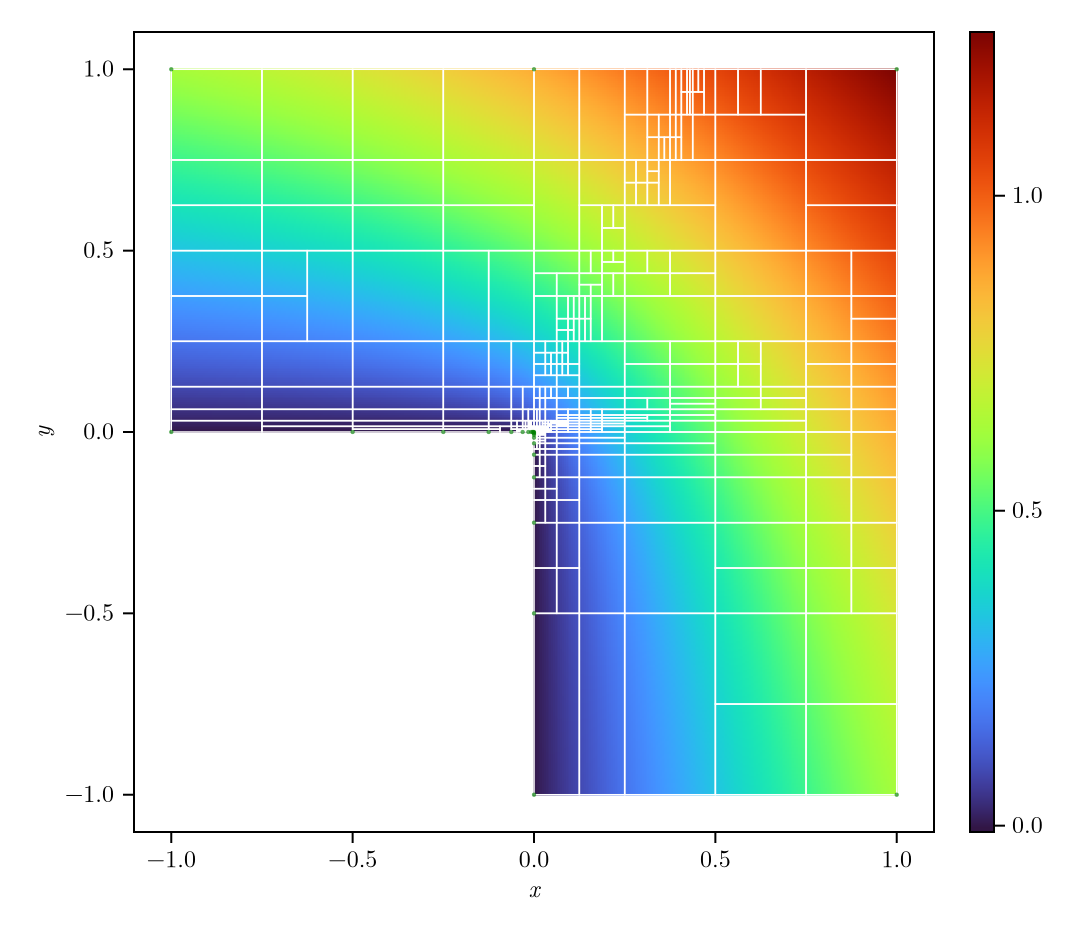
    }
    \caption{NN approximation with final AQ partition}
    \label{fig:lshaped-aq-sol-mesh}
  \end{subfigure}
  \begin{subfigure}[t]{0.48\linewidth}
    \includegraphics[width=\linewidth]{
        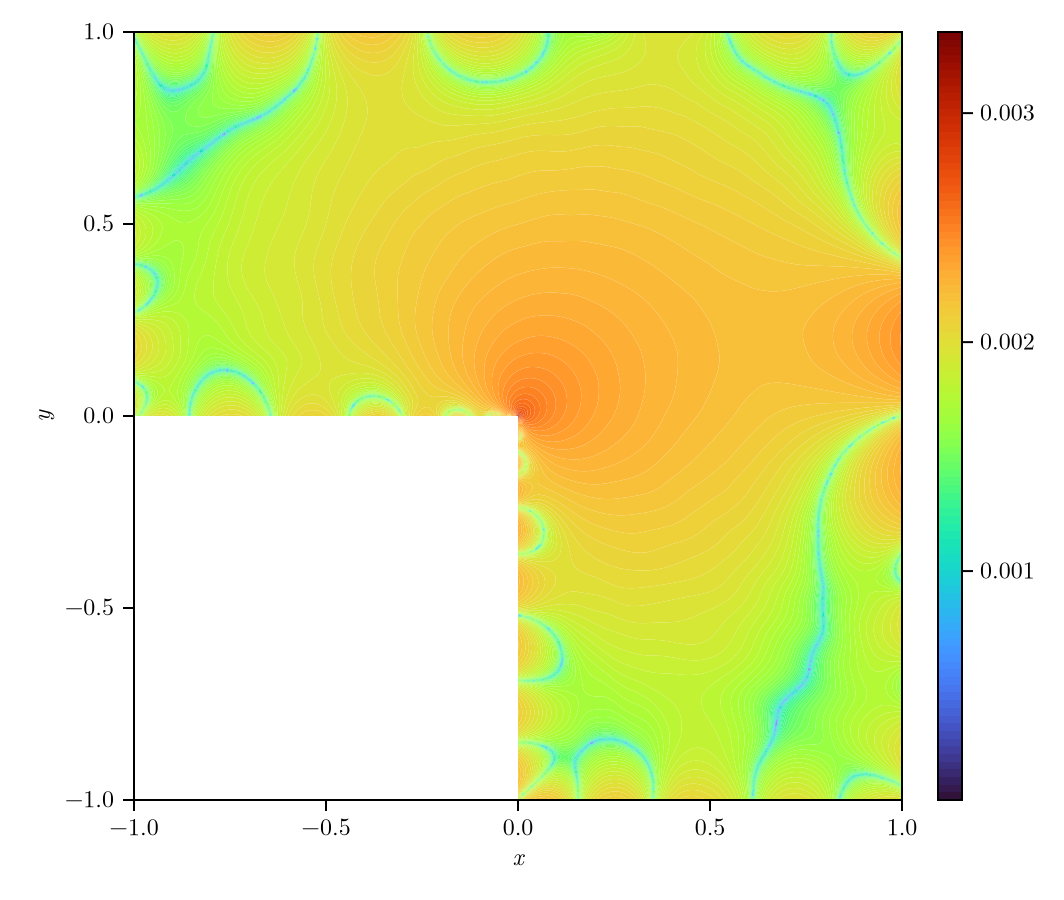
    }
    \caption{Absolute point-wise errors}
    \label{fig:lshaped-aq-errors}
  \end{subfigure}
  \caption{
  Adaptive quadrature solution, final adaptive partition and absolute point-wise errors for the 
        the L-shaped Poisson problem using the AQ algorithm.} 
  \label{fig:lshaped-aq-results}
\end{figure}
\Cref{fig:lshaped-aq-results} shows that the \ac{aq} approximation is highly
accurate overall. The point-wise errors are concentrated near the corner
singularity, while the remaining boundary regions are captured with high
fidelity. The training history records 17 mesh adaptation steps. These
refreshes become less frequent as training progresses, while the quadrature
error exhibits the expected drops at each \ac{aq} refinement point.

The autonomous nature of these \ac{aq} refreshes is particularly noteworthy
given the inherent difficulty of this problem, which typically challenges
traditional numerical methodologies due to the corner singularity and
non-convexity. The resulting refinement pattern concentrates quadrature
elements around the re-entrant corner, both in the domain interior and along
the singular boundary segments, so that the sharp gradients are resolved where
they matter most.
\begin{figure}
  \centering
  \begin{subfigure}[t]{1.0\linewidth}
    \centering
    \includegraphics[width=0.82\linewidth]{
        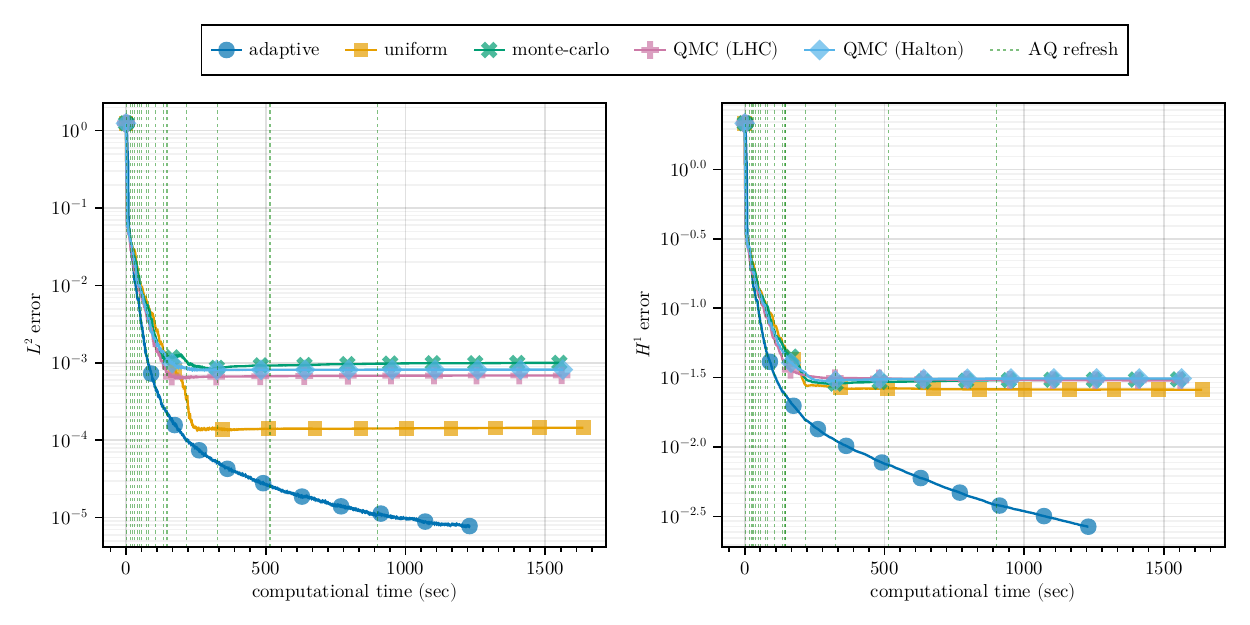
    }
    \caption{$L^2$ and $H^1$ error history comparison across all considered quadrature strategies.}
    \label{fig:lshaped-error-history-panel}
  \end{subfigure}
  \medskip
  \begin{subfigure}[t]{1.0\linewidth}
    \centering
    \includegraphics[width=0.82\linewidth,trim={0 0 0 38},clip]{
        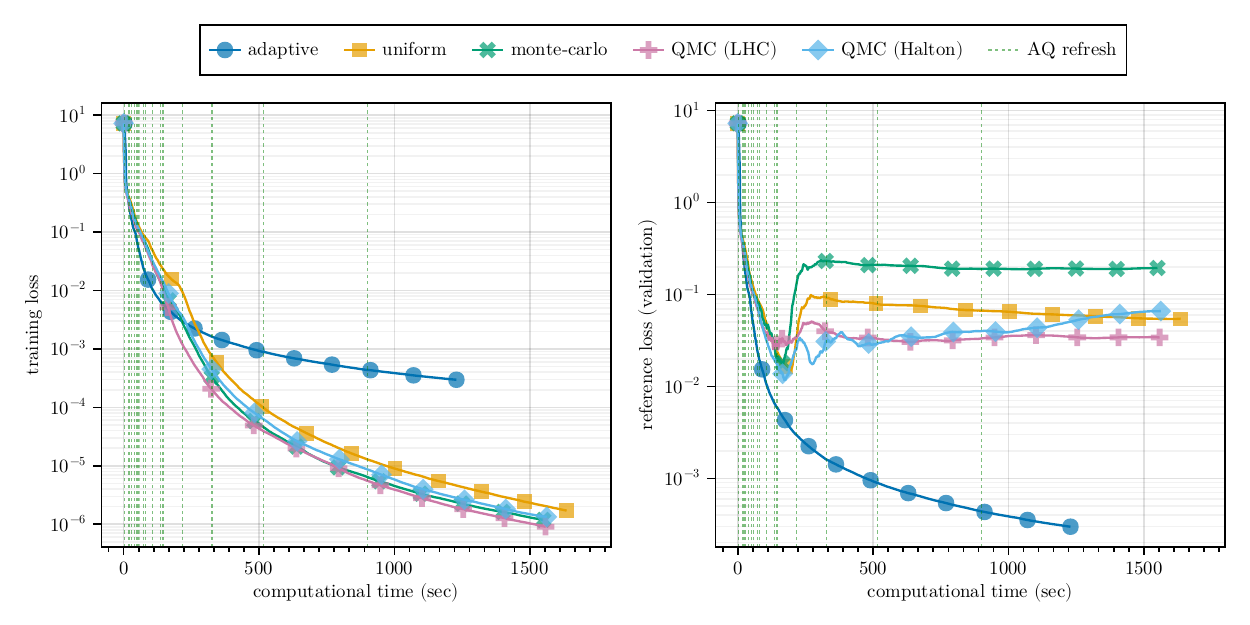
    }
    \caption{Training and reference loss history comparison across all considered quadrature strategies.}
    \label{fig:lshaped-loss-history-panel}
  \end{subfigure}
  \caption{Loss and error histories for the L-shaped Poisson problem.}
  \label{fig:lshaped-error-history-comparison}
\end{figure}
More importantly, the \ac{aq} strategy offers a clear advantage in
approximation accuracy while requiring at least $20\%$ less training time,
highlighting its quadrature efficiency. It maintains stable generalisation
throughout optimisation, with both $L^2$ and $H^1$ errors decreasing
monotonically. In contrast, the alternative approaches stall early. This
overfitting is clearly visible in~\Cref{fig:lshaped-loss-history-panel}. Only
the \ac{aq} strategy exhibits training and reference losses with the same
qualitative trend, up to a scaling factor. For the alternative approaches, the
reference loss plateaus despite the continued reduction of the training loss.

\subsection{Approximation on a non-trivial domain}
\label{sec:rhombi-aq}

We now consider a convection–diffusion–reaction problem on a non-convex double-rhombi
domain, as described in~\cite{MagueresseBadiaAdaptive2024}. The problem is defined as follows:
\begin{align} \label{eq:conv_diff}
- \pmb{\nabla}\cdot(\kappa \pmb{\nabla} u) + (\pmb{\beta} \cdot \pmb{\nabla}) u &= f \quad \hbox{in }  \Omega \subset \mathbb{R}^2,\\ 
u &= g \quad \hbox{on }  \partial \Omega. 
\end{align}
where the variable diffusion coefficient is given by $\kappa(x, y) = 2 +
\sin(xy)$ and the constant convection velocity is $\pmb{\beta} = [1, 2]^\top$.
The domain $\Omega$ is partitioned using an unstructured mesh (see~\Cref{fig:rhombi-aq-sol-mesh}). 
The source term $f$ and Dirichlet boundary data $g$ are determined via the
method of manufactured solutions, using:
\begin{equation*}
    u(x, y) = \tanh\left(25\left(x^2 + y^2 - 0.25\right)\right),
\end{equation*}
This solution profile exhibits sharp gradients concentrated along a circular
arc, which specifically intersects the top and bottom interior corners of the
double-rhombi domain, presenting a significant resolution challenge for
non-adaptive schemes.

For this problem, we employ a trial network architecture consisting of 4 hidden
layers with 30 neurons each, utilizing the $\tanh$ activation function. The
\ac{aq} algorithm is configured with a quadrature order combination of
$(7,10)$, with tolerances set to $\texttt{rtol} = 5 \times 10^{-3}$ and
$\texttt{refresh\_tol} = 5 \times 10^{-2}$. The network is trained for
$\num{5000}$ iterations, maintaining a constant Dirichlet penalty parameter of
$\gamma_D = 10.0$ throughout the optimisation process.
\begin{figure}
  \centering
  \begin{subfigure}[t]{0.48\linewidth}
    \includegraphics[width=\linewidth]{
        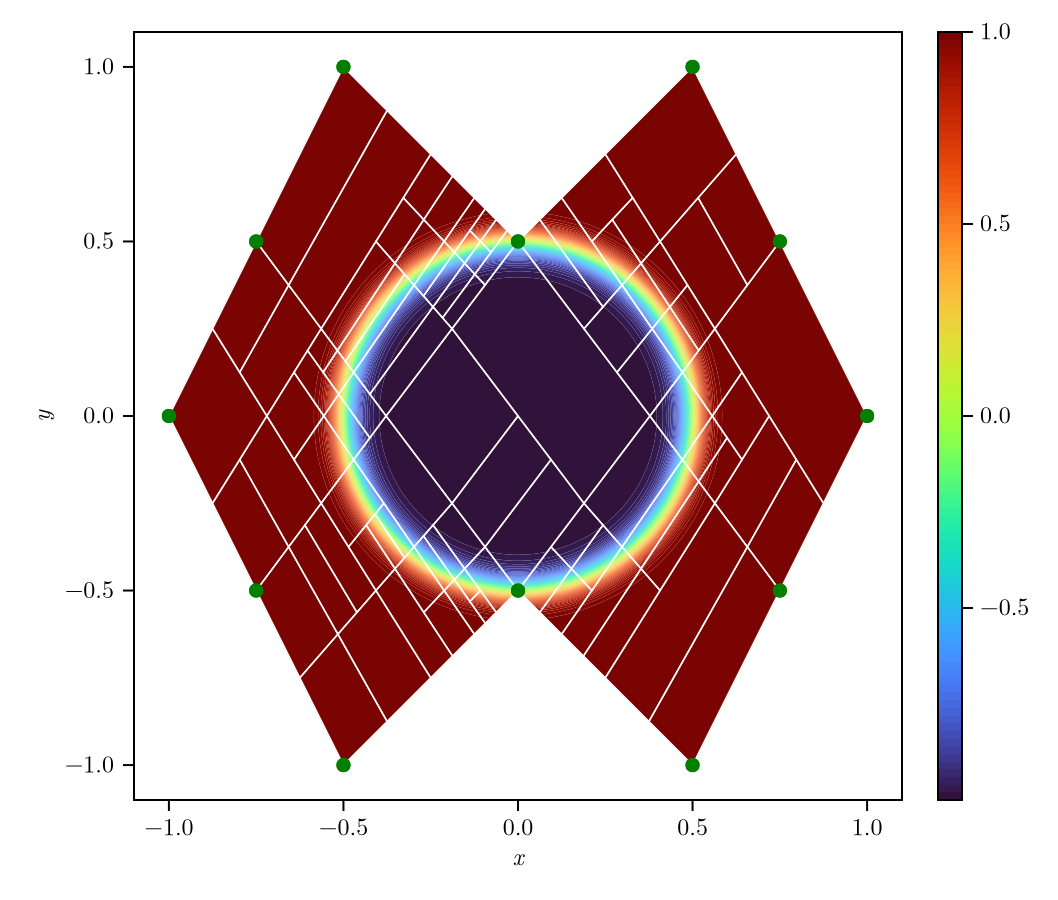 }
        \caption{NN approximation with final AQ partition}
    \label{fig:rhombi-aq-sol-mesh}
  \end{subfigure}
  \begin{subfigure}[t]{0.48\linewidth}
    \includegraphics[width=\linewidth]{
        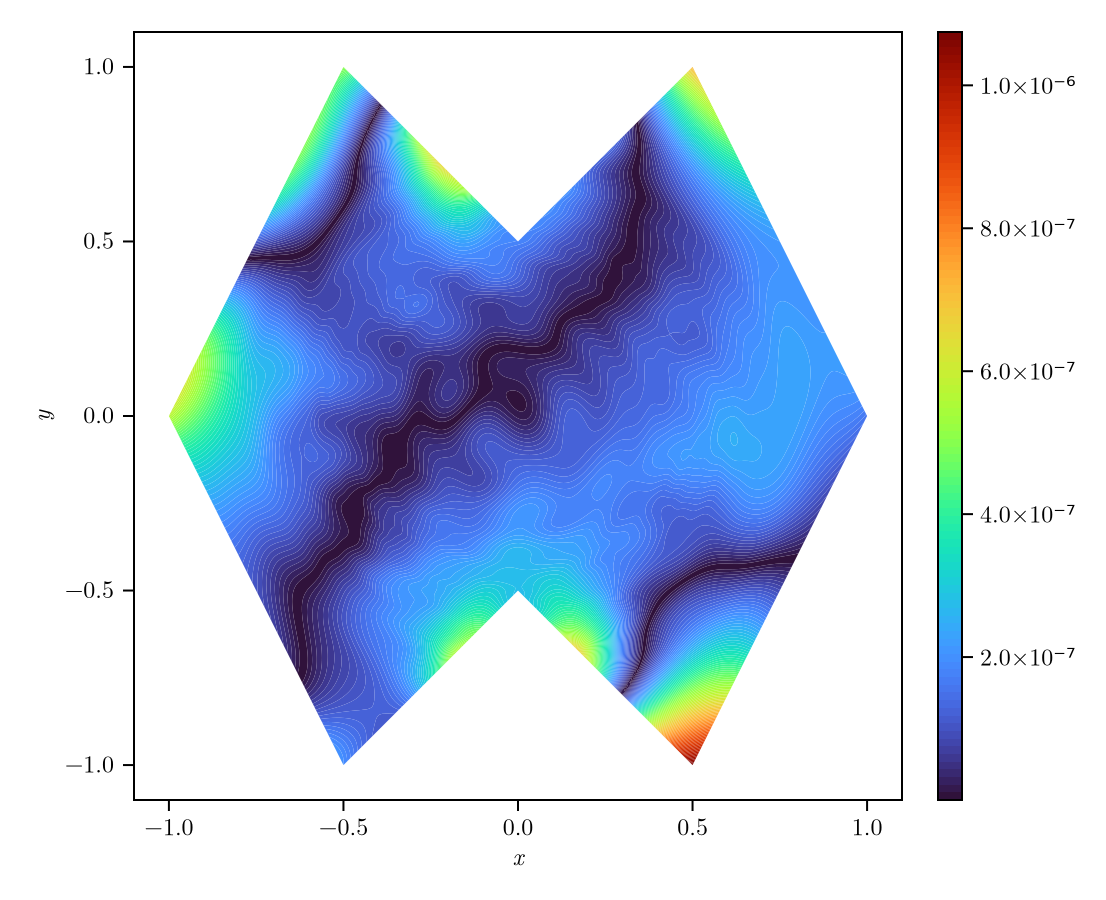
    }
    \caption{Absolute point-wise errors}
  \end{subfigure}
  \caption[Adaptive-quadrature solution for the convection--diffusion--reaction problem on the complex domain]{Adaptive-quadrature solution and final mesh for the
  convection--diffusion--reaction problem~\protect\eqref{eq:conv_diff} on the non-trivial domain using the AQ algorithm.} 
  \label{fig:rhombi-aq-results}
\end{figure}

We begin with the resolution of the circular gradient transition achieved
through the adaptive mesh updates. As shown in~\Cref{fig:rhombi-aq-results},
the final mesh remains relatively coarse yet tracks the circular feature
effectively. The \ac{aq} process is also efficient, reaching a maximum
partition size of 90 in only 7 adaptation steps, all within the first half of
the run. 
The point-wise error plot underscores the
framework's effectiveness: the error remains remarkably uniform across the domain. The boundaries require
only minimal refinement beyond the initial domain edges, indicating that the
$h$-adaptive quadrature localizes effort on the high-gradient interior feature
rather than over-refining the Dirichlet boundaries.
\begin{figure}
  \centering
  \begin{subfigure}[t]{1.0\linewidth}
    \centering
    \includegraphics[width=0.82\linewidth]{
        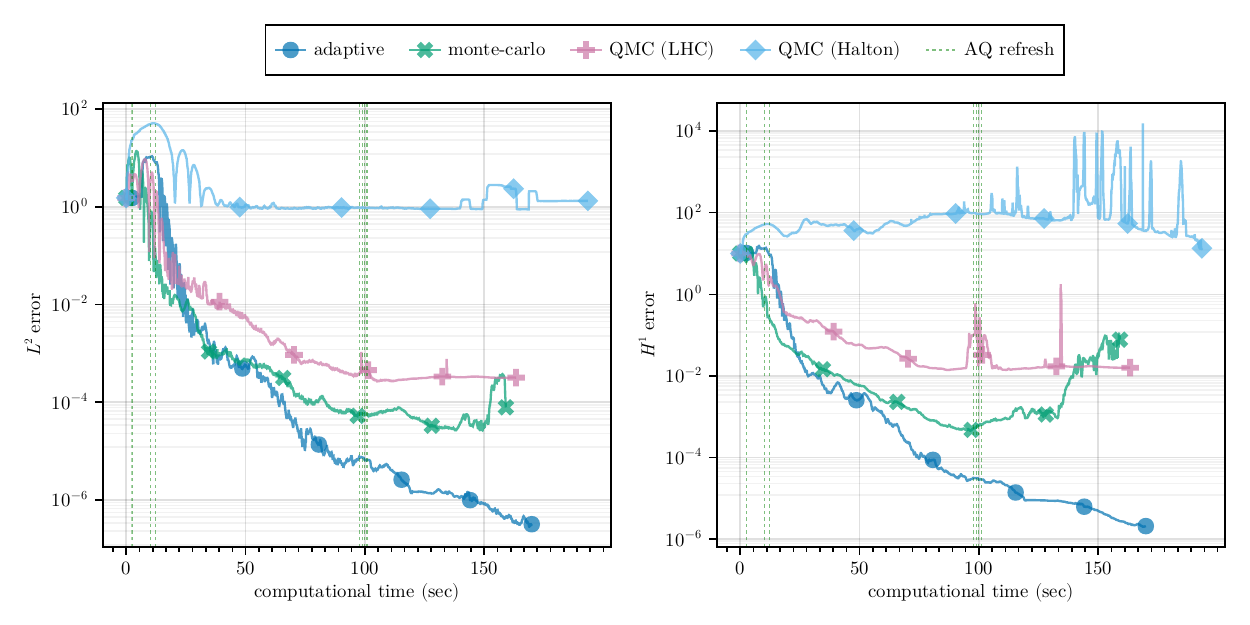
    }
    \caption{$L^2$ and $H^1$ error history comparison across all considered quadrature strategies.}
    \label{fig:rhombi-error-history-panel}
  \end{subfigure}
  \medskip
  \begin{subfigure}[t]{1.0\linewidth}
    \centering
    \includegraphics[width=0.82\linewidth,trim={0 0 0 38},clip]{
        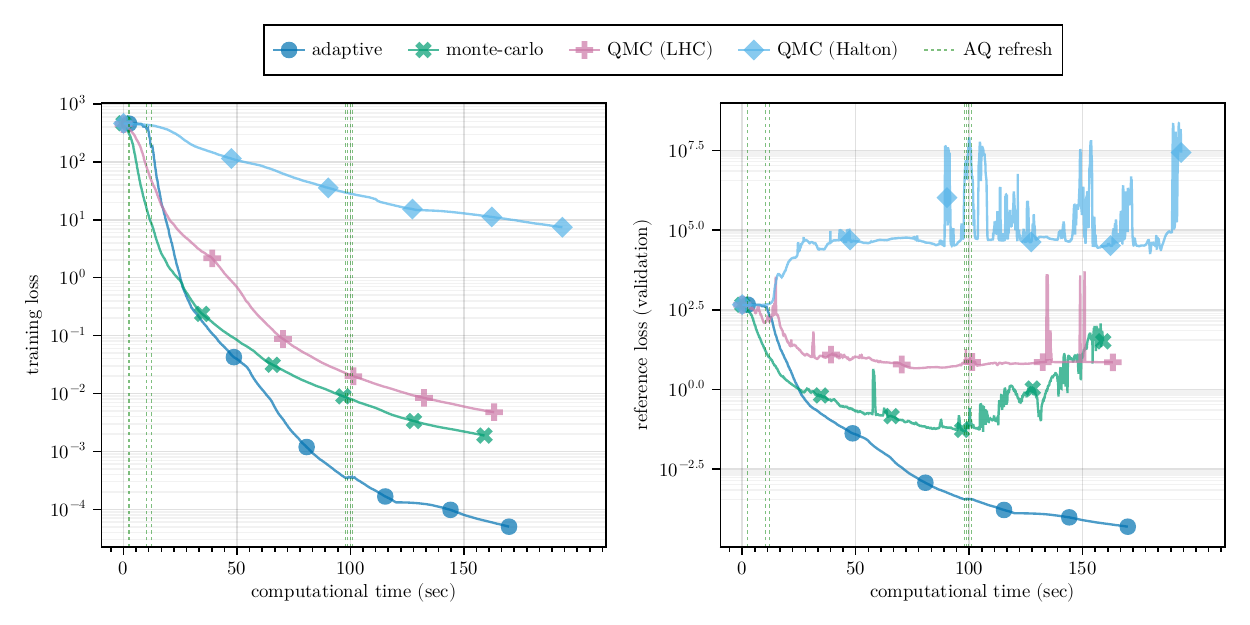
    }
    \caption{Training and reference loss history comparison across all considered quadrature strategies.}
    \label{fig:rhombi-loss-history-panel}
  \end{subfigure}
  \caption[Error and loss histories for the convection--diffusion--reaction problem on the non-trivial domain]{Loss and error histories for the convection--diffusion--reaction problem~\protect\eqref{eq:conv_diff} on the non-trivial domain.}
  \label{fig:rhombi-error-history-comparison}
\end{figure}

Furthermore, the \ac{aq} strategy achieves a markedly better approximation than
the competing methodologies. As shown in
\Cref{fig:rhombi-error-history-panel}, it attains an $L^2$ error more than two
orders of magnitude lower, and an $H^1$ error four orders of magnitude lower,
than any alternative. The error histories are strongly correlated with the
reference loss trajectories, indicating stable training and robust
generalisation (see~\Cref{fig:rhombi-loss-history-panel}). Moreover, only the
\ac{aq} strategy exhibits training and reference losses with the same
qualitative trend, up to a scaling factor. In contrast, the other approaches
exhibit clear overfitting or stagnation, most visibly when their training
histories continue to decrease while the reference loss does not. Notably, the
\ac{qmc} (Halton) method performs worst, which is consistent
with its behaviour in earlier experiments involving sharp gradients or
discontinuities.

\subsection{2D Navier-Stokes lid-driven cavity benchmark}
\label{sec:ns-ldc-aq}

We next consider the original
2D incompressible Navier-Stokes lid-driven cavity benchmark problem with a
Reynolds number $3200$ on $\Omega = [0,1]^2$. We enforce no-slip boundary conditions for the velocity. On the top boundary, we enforce a velocity profile given by 
$\pmb{u}(\pmb{x})=(u_T,0)$ with $u_T(x)$ chosen to be 
\begin{equation} \label{eq:ldc-top-profile}
    u_T(x) = 1 - \frac{\cosh{\left(C_0\left(x-0.5\right)\right)}}{\cosh{\left(0.5 C_0 \right)}}
\end{equation}
with $C_0=50$. This problem setting has been extracted from~\cite{PirateNets2024}, chosen to  
avoid discontinuities in the horizontal velocity at the top two corners of the cavity. 
We adopt the stream function-pressure approach and directly solve
the problem at $\mathrm{Re} = 3200$ (unlike the curriculum learning approach
\cite{PINNFailureModes2021} adopted in~\cite{PirateNets2024}), aimed at testing
the robustness of our approach in tackling the possible instabilities and
convergence to erroneous solutions reported in~\cite{Wang2023PINNsGuide,
Wang2023multiplicity}.

The high Reynolds number of $\mathrm{Re} = 3200$ presents a formidable challenge,
as the flow field is characterized by a hierarchy of vortices across multiple
scales, coupled with complex singular behaviour at the upper corners of the
cavity. To address this, we employ a two-output NN architecture
featuring 6 hidden layers and a width of 50 neurons per layer, using the
$\tanh$ activation function. 
The \ac{aq} algorithm is initialized from a uniform $5 \times 5$ base
partition, with tolerances configured to $\texttt{rtol} = 10^{-5}$ and
$\texttt{refresh\_tol} = 5 \times 10^{-3}$. Notably, the model is trained for a
total of $\num{25000}$ epochs. This represents a substantial reduction in
computational budget compared to the $\num{100000}$ iterations required by the
the state-of-the-art PirateNets~\cite{PirateNets2024}. This choice reflects our empirical
observation that, for this problem, training yields only marginal incremental
benefits beyond a certain iteration threshold. Throughout the optimisation, the
Dirichlet penalty parameter
$\gamma_D$ is held constant at $10$.
\begin{figure}
  \centering
  \begin{subfigure}[t]{0.48\linewidth}
    \includegraphics[width=\linewidth]{
        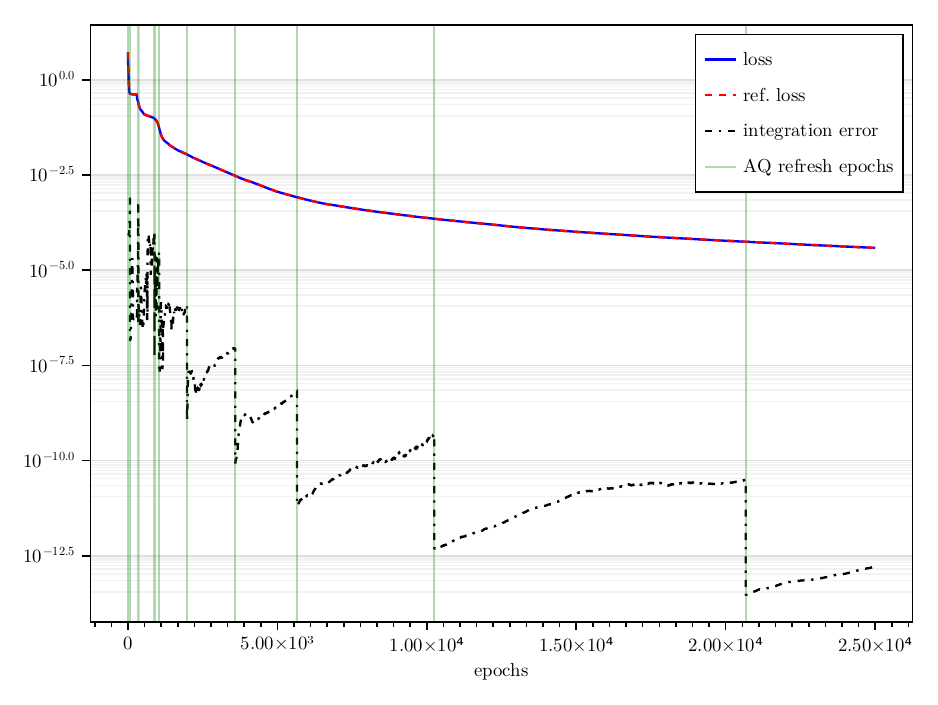
    }
    \caption{Loss curves with adaptive quadrature}
    \label{fig:ns-ldc-aq-training}
  \end{subfigure}
  \begin{subfigure}[t]{0.48\linewidth}
    \includegraphics[width=\linewidth]{
        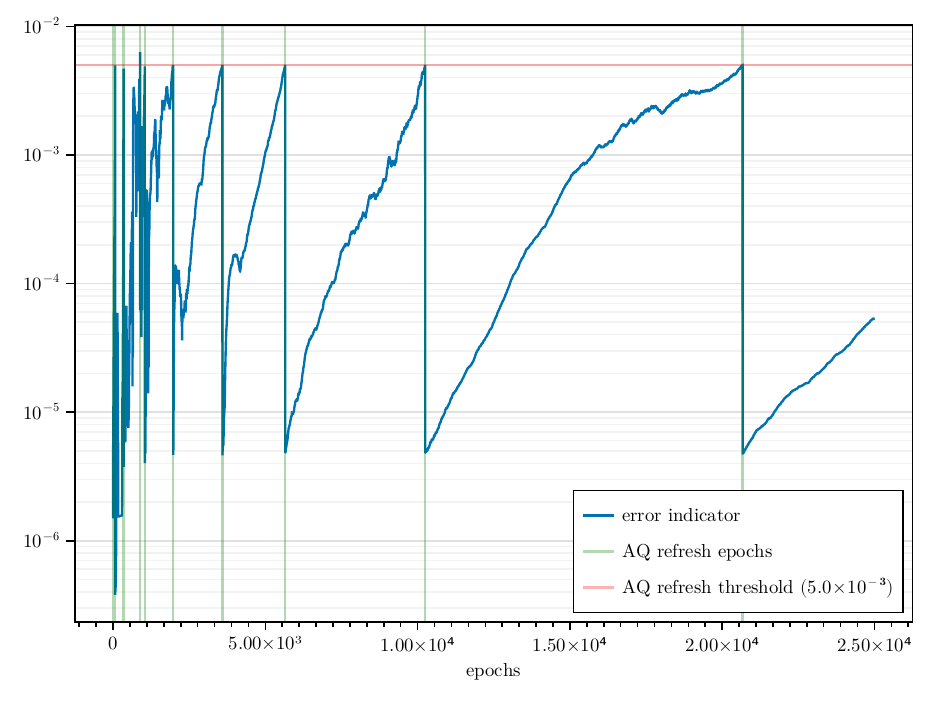
    }
    \caption{Error indicator \& AQ refresh history}
    \label{fig:ns-ldc-aq-error-ind}
  \end{subfigure}
  \caption[Adaptive-quadrature training diagnostics for the 2D Navier-Stokes lid-driven cavity benchmark]{Adaptive-quadrature training diagnostics for the 2D Navier-Stokes
  lid-driven cavity benchmark problem.}
  \label{fig:ns-ldc-aq-training-results}
\end{figure}

We begin by noting that the \ac{aq} approximation remains highly reliable, as
evidenced by the sustained control of the quadrature error shown in
\Cref{fig:ns-ldc-aq-training-results} and the close match between the training
and reference losses throughout the run. The process requires only $10$
\ac{aq} refreshes, most of which occur during the initial training phase. As
the optimisation
progresses, the frequency of these adaptations decreases, ultimately resulting
in a maximum of $480$ interior partitions and $83$ boundary partitions. Given
the strong performance and robust convergence of the \ac{aq} strategy on this
benchmark, we omit further comparisons with less reliable alternatives here.
\begin{figure}
  \centering
  \begin{subfigure}[t]{0.48\linewidth}
    \includegraphics[width=\linewidth]{
        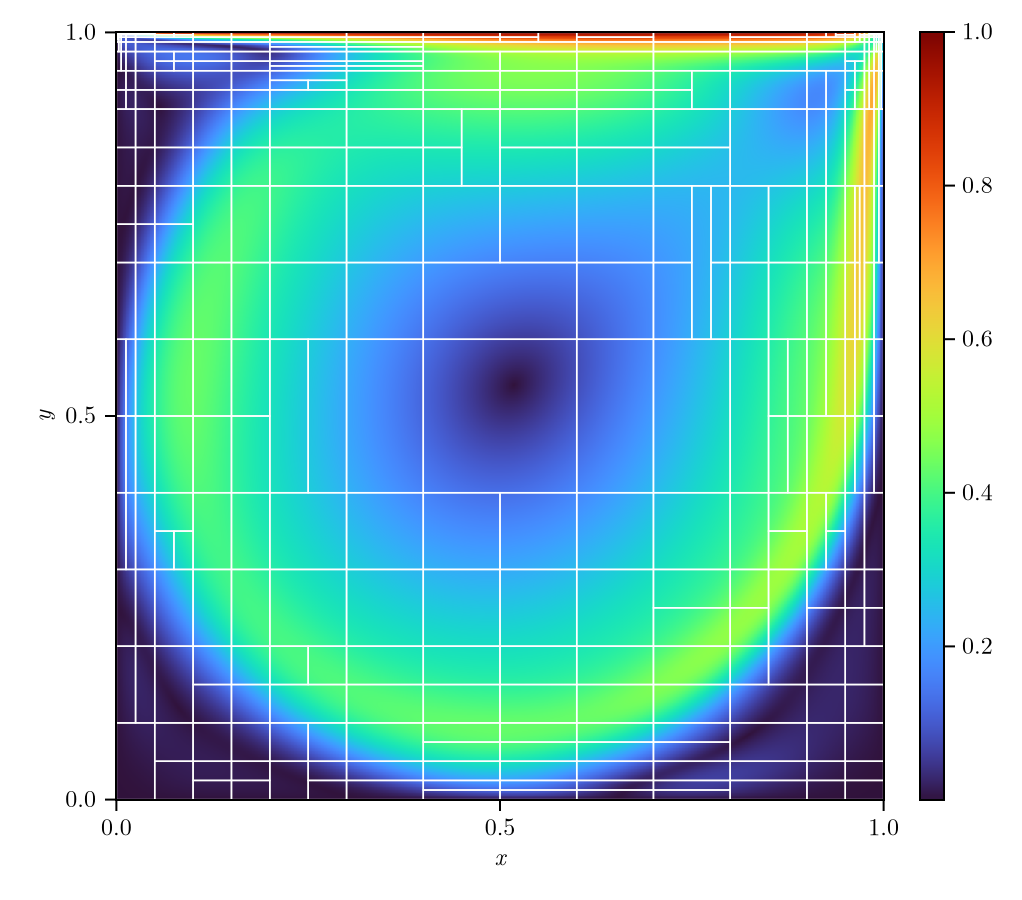
        } \caption{NN approximation with final AQ partition}
    \label{fig:ns-ldc-aq-sol-mesh}
  \end{subfigure}
  \begin{subfigure}[t]{0.48\linewidth}
    \includegraphics[width=\linewidth]{
        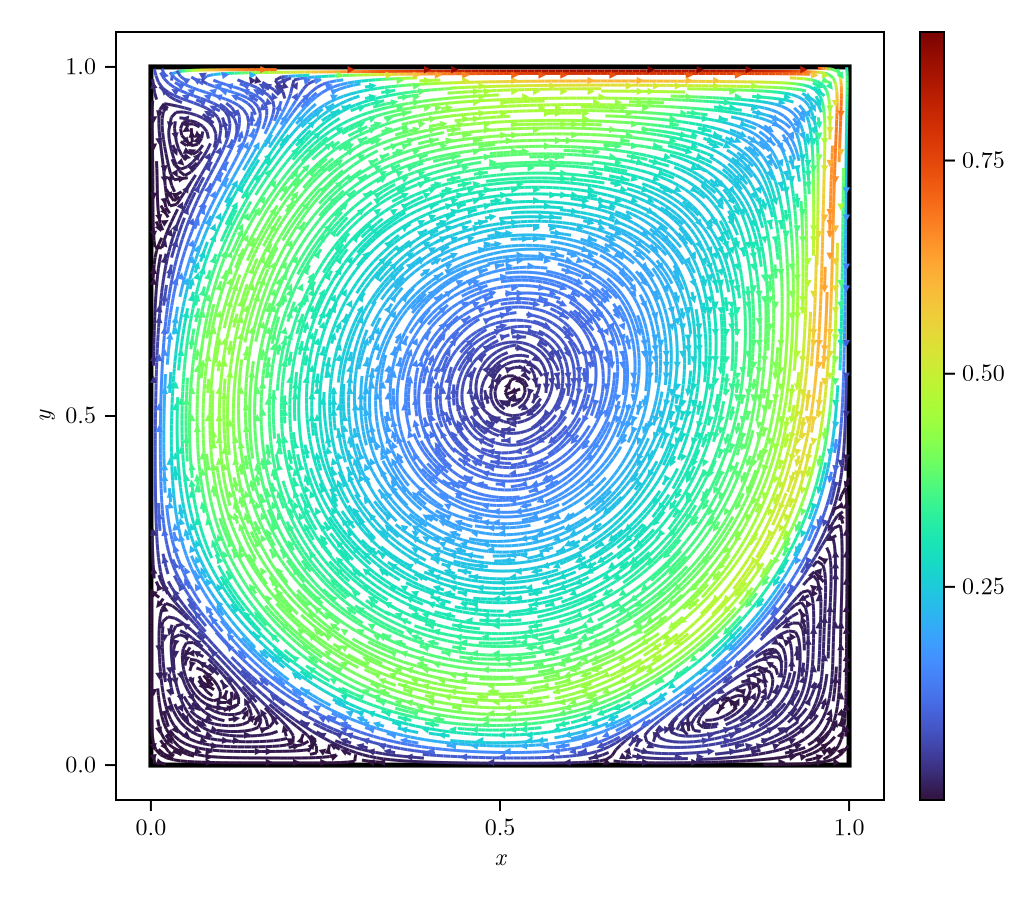
    }
    \caption{Streamlines of the NN approximation}
    \label{fig:ns-ldc-aq-sp}
  \end{subfigure}
  \medskip
  \begin{subfigure}[t]{0.48\linewidth}
    \includegraphics[width=\linewidth]{
        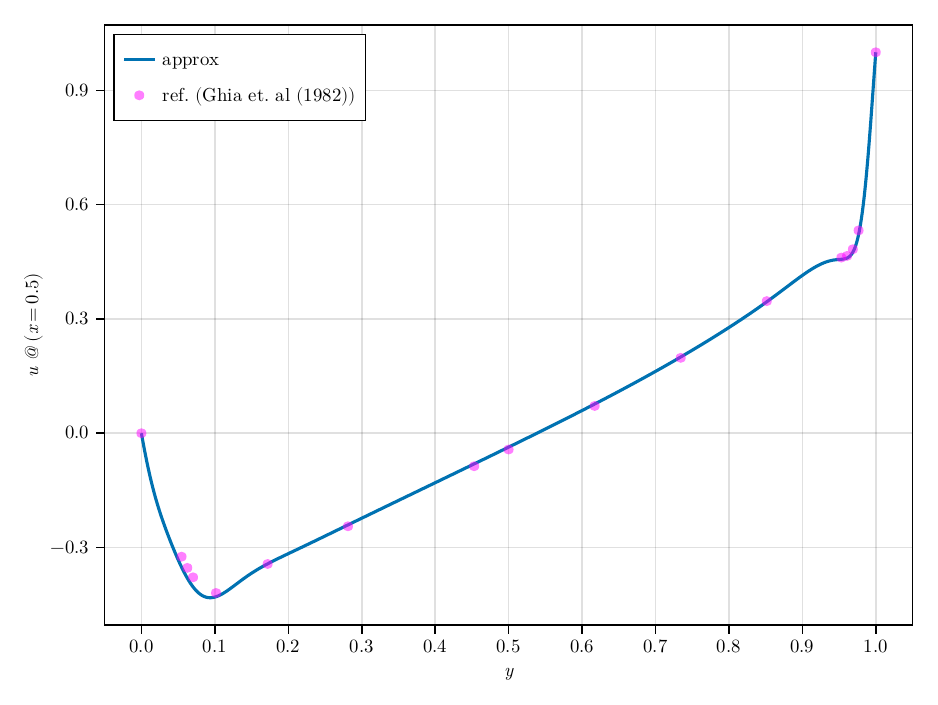
    }
        \caption{Transverse horizontal velocity comparison}
    \label{fig:ns-ldc-transu-comparison}
  \end{subfigure}
  \begin{subfigure}[t]{0.48\linewidth}
    \includegraphics[width=\linewidth]{
        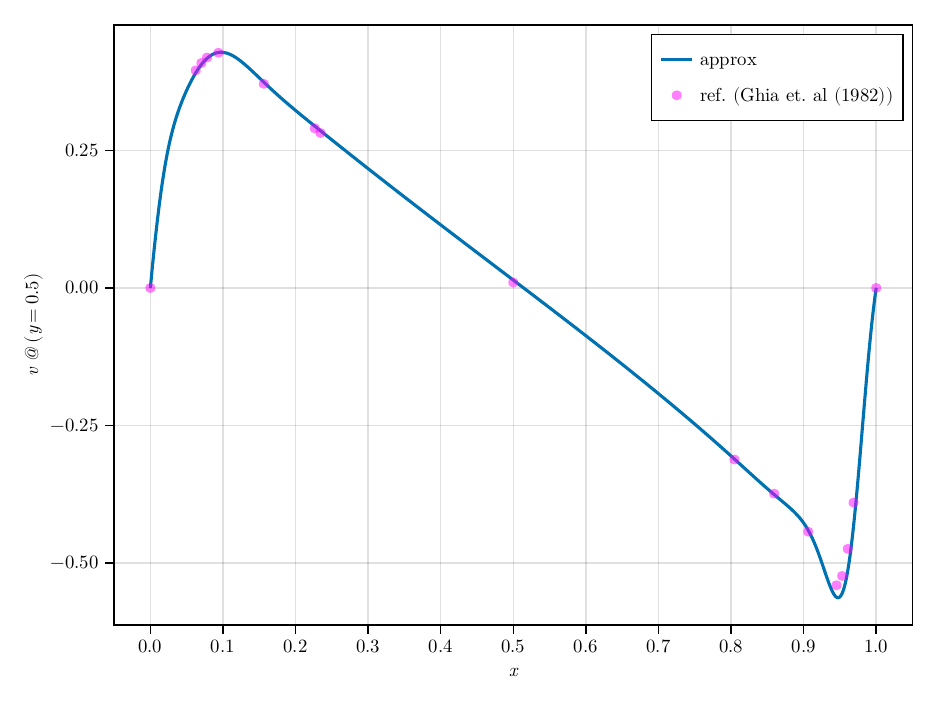
    }
    \caption{Transverse vertical velocity comparison}
    \label{fig:ns-ldc-transv-comparison}
  \end{subfigure}
  \caption{Fluid-velocity results for the 2D Navier-Stokes lid-driven
  cavity and transverse velocity comparison
  against the benchmark data of~\cite{Ghia1982}.}
  \label{fig:ns-ldc-aq-results}
\end{figure}
Examining the quality of the approximation achieved in
\Cref{fig:ns-ldc-aq-results}, we observe that the approach captures the global
flow characteristics well. Panel (a) shows that the \ac{aq} strategy
prioritizes resolution at the singular top corners while maintaining a coarser
discretisation in the central domain. Panel (b) provides further evidence of
solution quality by resolving vortices of various sizes and locations across
the cavity. More importantly, panels (c) and (d) compare the transverse
horizontal and vertical velocity profiles along the central axes against the
standard reference data for transverse velocities taken from~\cite{Ghia1982}.
These
profiles demonstrate a close agreement between the approximation obtained via
the \ac{aq} strategy and the reference results.

The approximation quality is further supported quantitatively by the achieved
relative $L^2$ error of $3.71 \times 10^{-2}$, representing around a $12\%$
improvement over the state-of-the-art result of $4.21 \times 10^{-2}$ reported by
PirateNets~\cite{PirateNets2024}. Notably, the PirateNets results were obtained
using a significantly deeper residual-based architecture and a training budget
of $\num{100000}$ epochs, whereas our approach utilizes a more compact network
and $\num{25000}$ iterations. This comparison highlights the superior
efficiency and accuracy offered by the proposed \ac{aq} algorithm and training
strategy, which effectively extracts higher accuracy from smaller networks and
standard architectures.

\subsection{2D Stokes lid-driven wedge benchmark}
\label{sec:stokes-aq}
To assess the proposed adaptive quadrature strategy on a challenging fluid
problem, we solve the Stokes equations for viscous flow in a lid-driven wedge
as described in~\cite{KiyaniOpt2025}. 
This problem exhibits the multiscale hierarchy of
Moffatt vortices induced by the geometry. 
To make the vortices and mesh partitions easier to inspect, all illustrations
in this section stretch the $x$-axis by a factor of $2$, yielding an aspect
ratio of $1$.
\begin{figure}
  \centering
    \includegraphics[width=\linewidth]{
        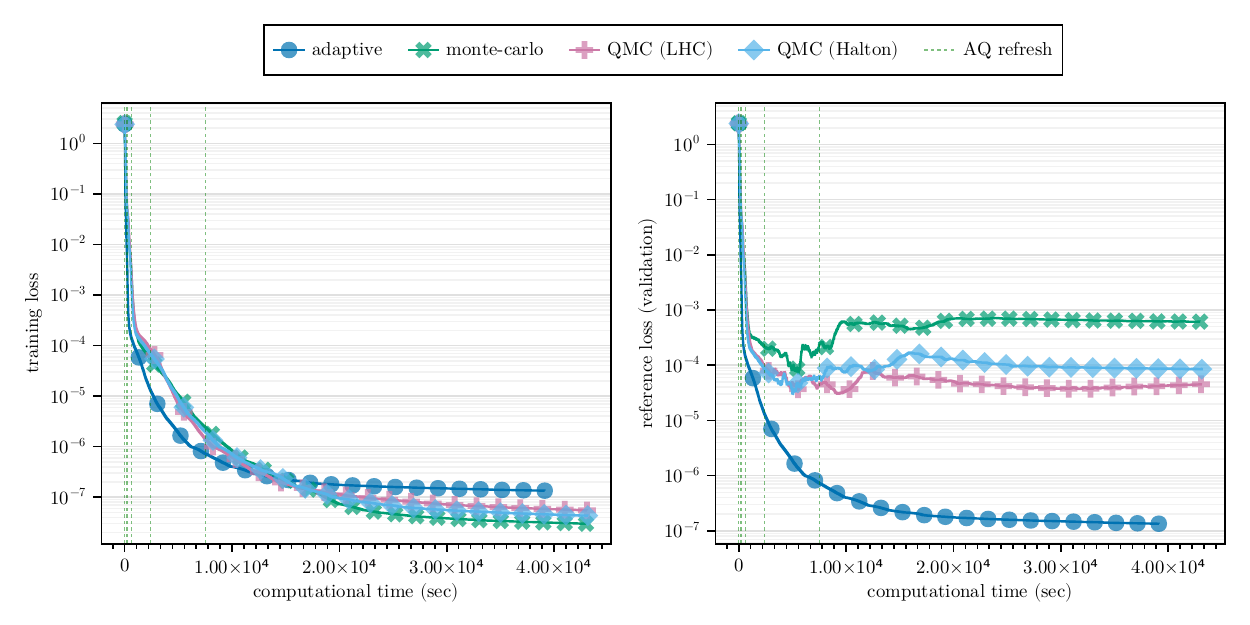
        } \caption{Training and reference loss history comparison for the 2D
        Stokes (Moffatt) problem across all considered quadrature strategies.}
  \label{fig:stokes-loss-history-comparison}
\end{figure}
Although this benchmark is linear, it remains a
significant challenge because the vortex hierarchy weakens rapidly near the
bottom corners; see~\cite[Section 2.6]{KiyaniOpt2025} for a detailed
discussion. We therefore employ a larger network architecture with width 64
and depth 5. The \ac{aq}
algorithm is configured with tolerances of $\texttt{rtol} = 10^{-5}$ and
$\texttt{refresh\_tol} = 5 \times 10^{-3}$, and the model is trained for
$\num{200,000}$ epochs. Throughout the optimisation, the Dirichlet penalty
parameter $\gamma_D$ is held constant at $10$.

For this problem, the comparison between the training and reference
loss histories in~\Cref{fig:stokes-loss-history-comparison} is
crucial. The training losses are initially similar across all approaches, so
they are not by themselves informative about solution quality. However,
the reference loss histories differ substantially. While the
\ac{aq} strategy maintains stable generalisation, the alternative approaches
show early overfitting, with the reference loss diverging and then
stagnating. 
The data efficiency of the \ac{aq} approach is also noteworthy. Starting from a
base mesh of 20 partitions, it requires only 6 adaptation steps, all within
the first quarter of training. The algorithm reaches a maximum of 159 interior
partitions and 40 boundary partitions, with refinement concentrated near the
top two corners in both the interior and boundary discretisation.
\begin{figure}
    \centering
    \begin{subfigure}[t]{0.48\textwidth}
        \centering
        \includegraphics[width=\textwidth]{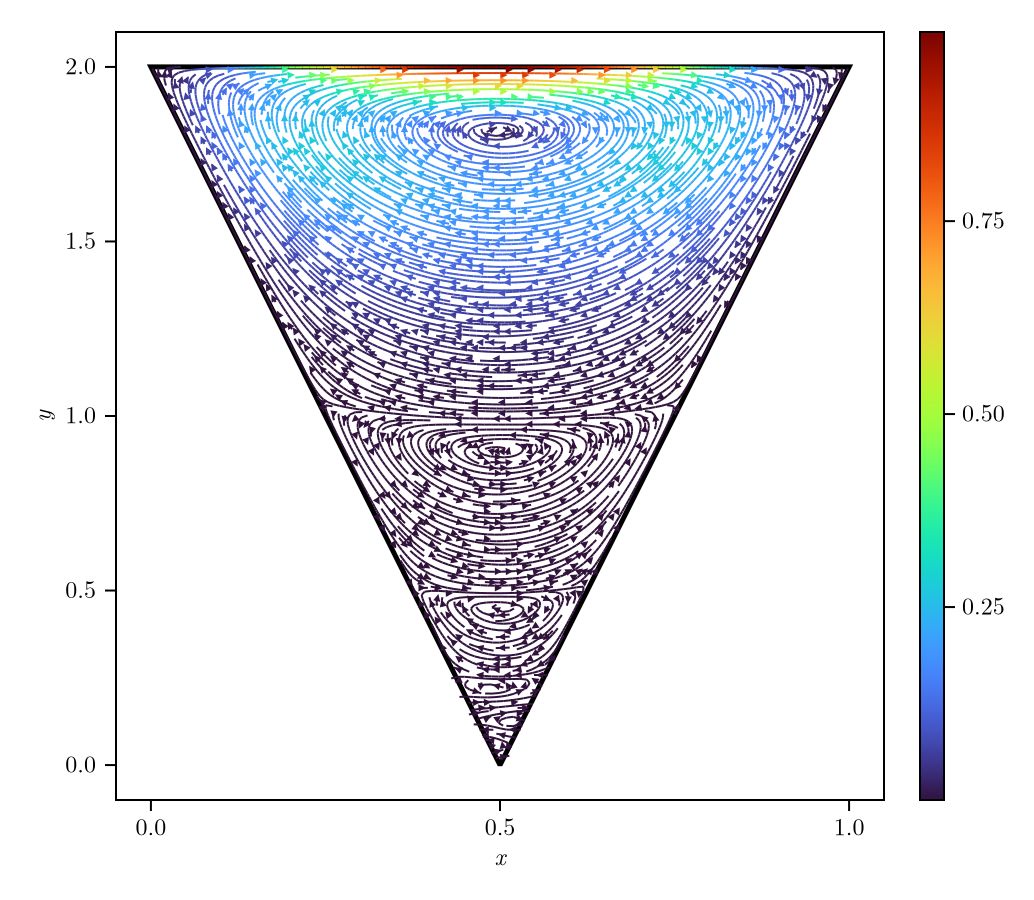}
        \caption{Adaptive quadrature}
        \label{fig:stokes-streamlines-aq}
    \end{subfigure}
    \begin{subfigure}[t]{0.48\textwidth}
        \centering
        \includegraphics[width=\textwidth]{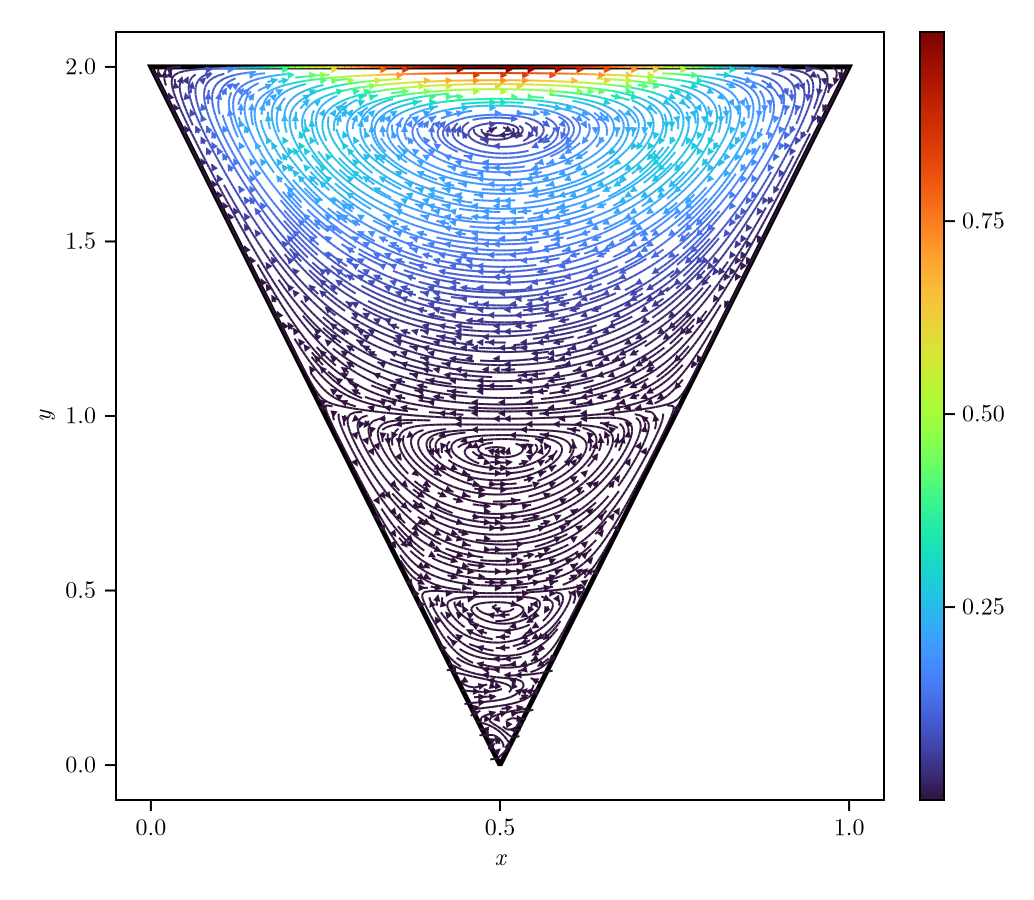}
        \caption{QMC (LHC)}
        \label{fig:stokes-streamlines-qmc-lhc}
    \end{subfigure}
    
    \caption{Streamlines for 2D Stokes (Moffatt) when trained with adaptive
    quadrature and QMC
    (LHC).}
    \label{fig:stokes-streamlines}
\end{figure}

Finally, we compare solution fidelity using only the strongest non-adaptive
baseline, namely QMC (LHC). Both methods appear to outperform the
results reported in~\cite[Figure 23(d)]{KiyaniOpt2025}, despite that study using a
deeper architecture (8 hidden layers) and a much larger training budget of
around $\num{450000}$ epochs. That approach also relied on
$\num{120000}$ residual points and attention-based sampling
\cite{Anagnostopoulos2026}, whereas our \ac{aq} strategy uses only
$\num{7791}$ quadrature points. Moreover, while the reference study employs
hard boundary constraints, we use a penalty formulation to better reflect
practical domains and general boundary conditions, where hard enforcement is
often infeasible.

For a more complete assessment of solution quality, we examine the streamlines
in~\Cref{fig:stokes-streamlines}. These show that the \ac{aq} strategy
captures the vortex hierarchy near the bottom vertex more completely than the
best alternative, where the smaller vortices are only partially resolved. Away
from the bottom corner, the two solutions are already broadly similar, so the
main visible difference is the more complete recovery of the fine vortex
structure by AQ. The adaptive flow structures are 
better formed than the best solution reported in
\cite[Figure 23(a)]{KiyaniOpt2025}. This again highlights the ability of the
\ac{aq} framework to resolve fine-scale multiscale features with a fraction of
the computational resources and a simpler network architecture.

\subsection{Parametric 2D Nonlinear Convection-Diffusion-Reaction problem}
\label{sec:parametric-prob}

Before turning to a fully 3D problem, we consider a 2D parametric benchmark
from~\cite{BerroneVPINNs2022}. The goal is to
illustrate the advantage of adaptive quadrature in a parametric setting
without resorting to specialised architectures for nonlinear operator learning
\cite{DeepONet2021, FNO2021}. The problem reads: find $u$ such that 
\begin{align} \label{eq:parametric-2d-problem}
-\nabla \cdot (\mu \nabla u) + \pmb{\beta}\cdot \nabla u + \sigma\,  {\rm e}^{-pu^2} &= f \quad \text{in \;\; } \Omega\,, \\
u &= g \quad \text{on \; } \partial\Omega \,, 
\end{align}
where $p$ ranges over a prescribed parameter set
$\Omega_p \subset \mathbb R$ on which we seek a reliable approximation of the
PDE. The coefficients $\mu$, $\pmb{\beta}$ and $\sigma$ are independent of
$p$.

For this benchmark, we take $\Omega = (0,1)^2$, $\mu = 1$,
$\pmb{\beta} = [2,3]^T$ and $\sigma = 4$, and choose $f = f(\cdot,p)$ and
$g = g(\cdot,p)$ so that the manufactured solution is
\begin{equation}
u(x,y;p) = \frac{\cos\left(5\left(px+\frac y2\right)\right)}{1+p} + \left(x+\frac y2\right)^2.
\end{equation}
We enlarge the parametric domain from the original range
$\Omega_p = [0.5, 2]$ in~\cite{BerroneVPINNs2022} to $\Omega_p = [0.5, 10]$ in
order to test robustness over a substantially wider parameter interval. This turns the problem into an effectively 3D one in
$(p,x,y)$ space, with computational domain
$\Omega \times \Omega_p$. Dirichlet boundary conditions are
 imposed only on the physical boundary $\partial\Omega \times \Omega_p$ .

We use the tensor-product Gauss-Legendre order pair $(4,5)$ for the adaptive quadrature. 
We start from a uniform mesh
with 5 partitions in each direction and set $\texttt{rtol} = 5 \times 10^{-3}$ and
$\texttt{refresh\_tol} = 5 \times 10^{-2}$. The \ac{nn} has 7 hidden
layers of width 40 with $\tanh$ activation. Training is run for at most
\num{10000} epochs, with the Dirichlet penalty parameter fixed at $\gamma_D = 10$.

The resulting adaptive run requires only three quadrature refreshes. It reaches
a maximum of 874 interior partitions, amounting to \num{55936} quadrature
points, together with 127 boundary partitions, corresponding to \num{2032}
points on the Dirichlet boundary. The final relative $L^2$ and $H^1$ errors
over the full spatial-parametric domain are $1.11 \times 10^{-5}$ and
$1.86 \times 10^{-4}$, respectively.
\begin{figure}
  \centering
  \begin{subfigure}[t]{1.0\linewidth}
    \centering
    \includegraphics[width=0.82\linewidth]{
        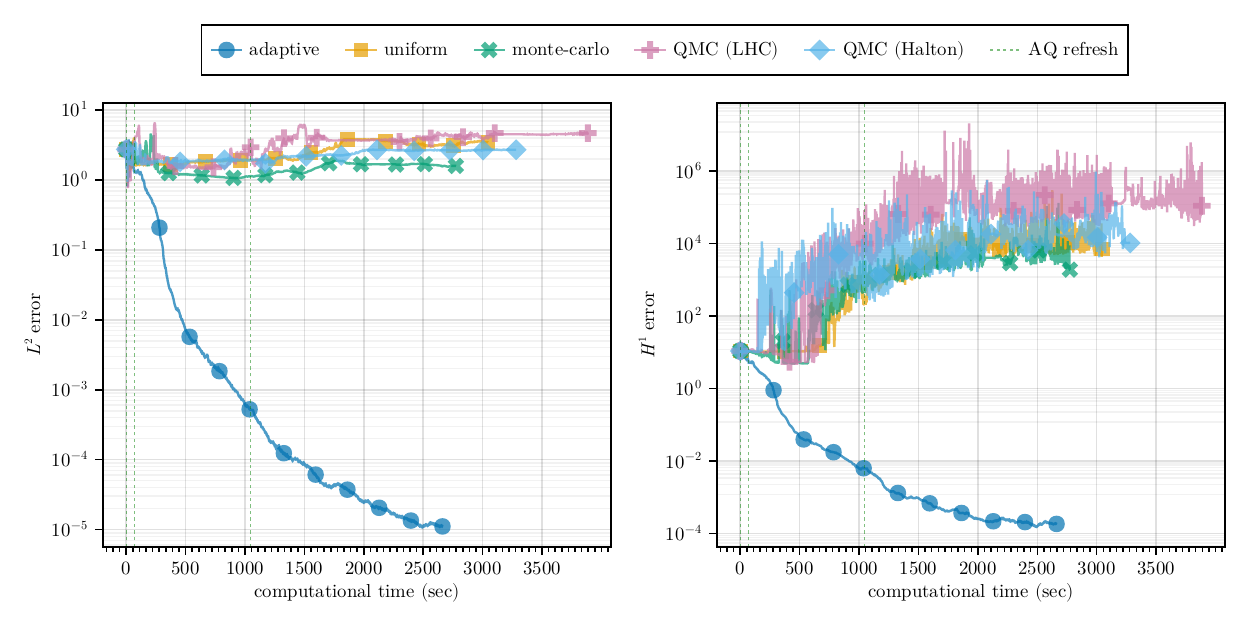
    }
    \caption{$L^2$ and $H^1$ error history comparison across all considered quadrature strategies.}
    \label{fig:parametric-2d-error-history-panel}
  \end{subfigure}
  \medskip
  \begin{subfigure}[t]{1.0\linewidth}
    \centering
    \includegraphics[width=0.82\linewidth,trim={0 0 0 38},clip]{
        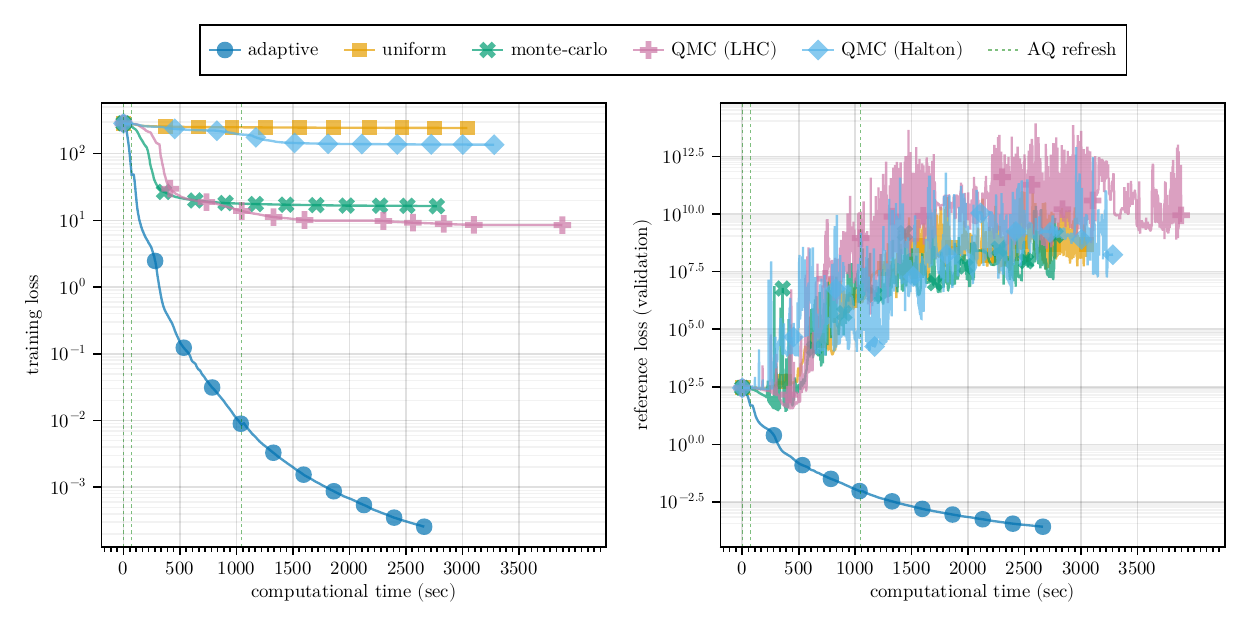
    }
    \caption{Training and reference loss history comparison across all considered quadrature strategies.}
    \label{fig:parametric-2d-loss-history-panel}
  \end{subfigure}
  \caption{Loss and error histories for the nonlinear convection–diffusion–reaction problem in~\eqref{eq:parametric-2d-problem}.}
  \label{fig:parametric-2d-error-history-comparison}
\end{figure}
The comparison in~\Cref{fig:parametric-2d-error-history-panel} shows a clear
advantage for the \ac{aq} strategy. Since the reported errors are measured over
the entire spatial-parametric domain, they directly reflect generalisation
across the full parameter range. In contrast, the non-adaptive alternatives
diverge completely: their error histories stagnate very early, while the
training-reference loss comparison in
\Cref{fig:parametric-2d-loss-history-panel} shows growing discrepancy between
training and reference losses. This shows that adaptive quadrature
is essential for obtaining robust approximations in parametric nonlinear
problems of this type.

\subsection{The (2+1)D Parabolic Poisson equation}

We now consider a challenging (2+1)D parabolic heat equation from the
NIST-\ac{amr} benchmark collection~\cite{NISTAMR}, involving a moving circular
wave front. This benchmark is relevant because it probes whether adaptive
algorithms can control the number of \acp{dof} required in a finite element setting. 
The problem is posed with homogeneous initial condition and
Dirichlet boundary conditions.
The domain is $\Omega = (x_0, x_1) \times (y_0, y_1)$ and the source term is chosen such that the solution of the problem is the
manufactured solution given by
\begin{equation}
u(x,y,t) = \frac{1}{C} (x - x_0)(x - x_1)(y - y_0)(y - y_1)\tan^{-1}(t)\left(\frac{\pi}{2} - \tan^{-1}\left(\alpha (r -t) \right) \right),
\end{equation}
with $r = \sqrt{\left(x - x_c\right)^2 + \left(y - y_c\right)^2}$.
Specifically, we take $\left(x_c , y_c\right) = (0,0)$, $\alpha = 100$,
$C = 10000$, and
$(x_0, x_1) \times (y_0, y_1) \times (0,T) = (0,10) \times (-5,5) \times (0,10)$.

This benchmark is effectively a 3D space-time problem for the \ac{nn}
approximation. We therefore use the Witherden-Vincent optimised quadrature rule
\cite{WitherdenVincent2015} with degree pair $(9,11)$, which is more
data-efficient than standard tensor-product rules. The \ac{aq} base mesh is a
uniform grid with three partitions in each direction. To keep the total number
of quadrature points within the memory limits of the V100 GPU, we use the
larger tolerances $\texttt{rtol} = 5 \times 10^{-2}$ and
$\texttt{refresh\_tol} = 10^{-1}$. The \ac{nn} has 7 hidden layers of
width 25 with $\tanh$ activation. Training is run for at most \num{15000}
epochs, with the initial condition enforced through a penalty term analogous to
the boundary data and the Dirichlet penalty parameter fixed at 10.

\begin{figure}
  \centering
  \begin{subfigure}[t]{1.0\linewidth}
    \centering
    \includegraphics[width=0.82\linewidth]{
        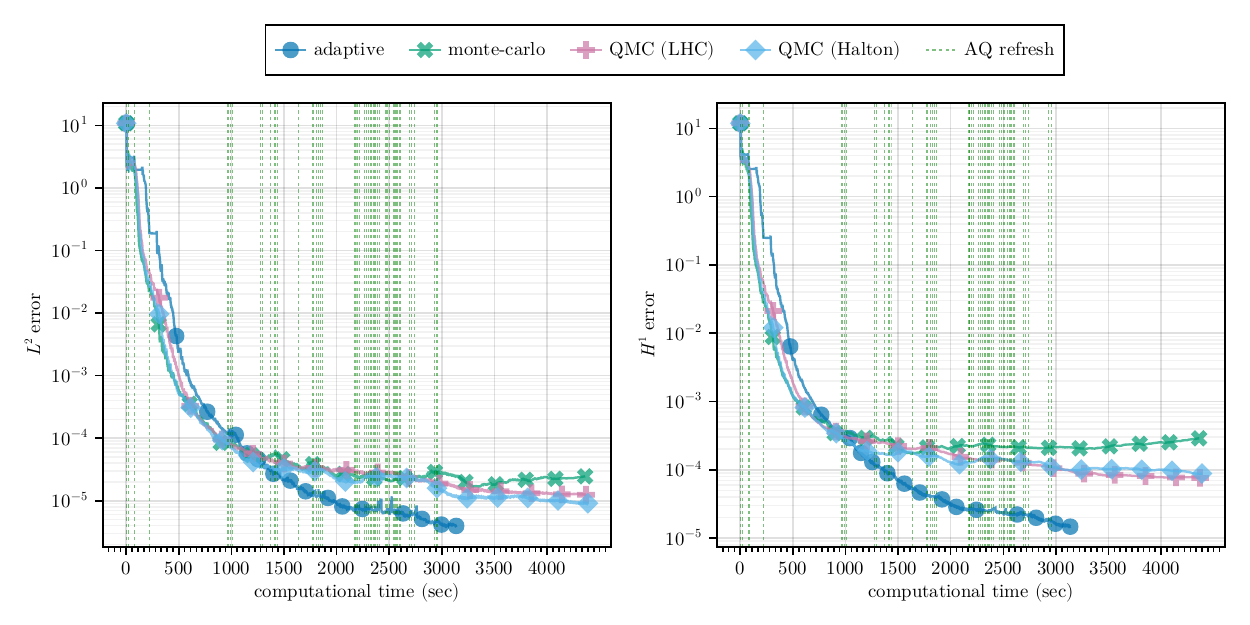
    }
    \caption{$L^2$ and $H^1$ error history comparison across all considered quadrature strategies.}
    \label{fig:movingarc-error-history-panel}
  \end{subfigure}
  \medskip
  \begin{subfigure}[t]{1.0\linewidth}
    \centering
    \includegraphics[width=0.82\linewidth,trim={0 0 0 38},clip]{
        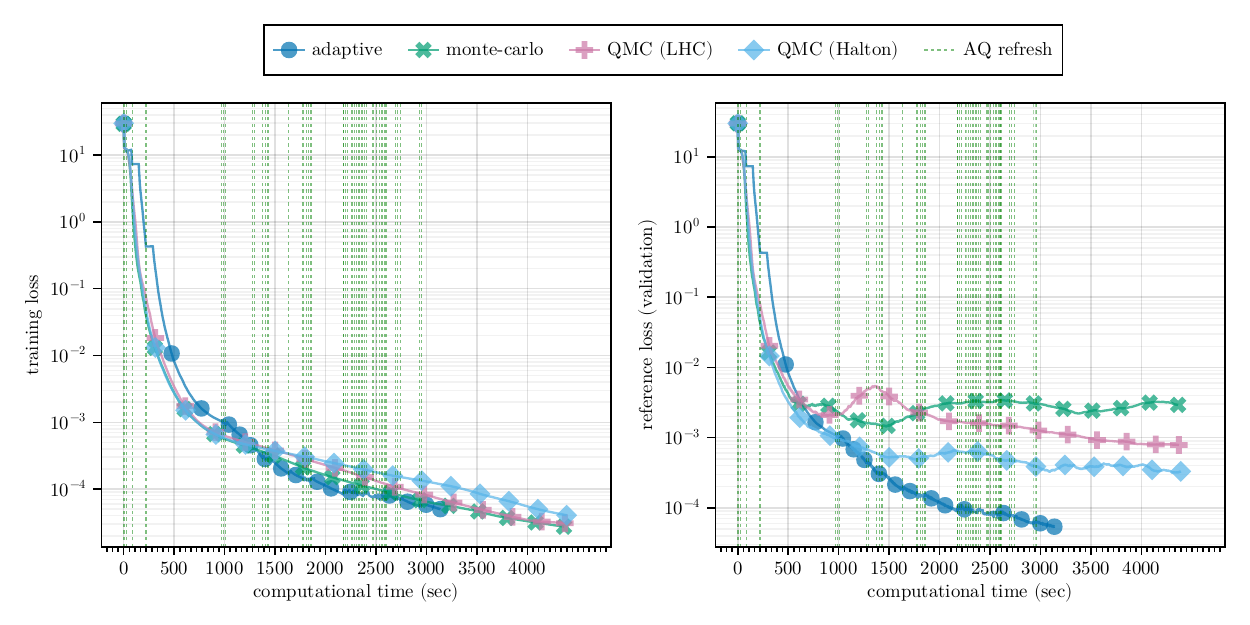
    }
    \caption{Training and reference loss history comparison across all considered quadrature strategies.}
    \label{fig:movingarc-loss-history-panel}
  \end{subfigure}
  \caption{Loss and error histories for the (2+1)D
  Parabolic Poisson problem.}
  \label{fig:movingarc-error-history-comparison}
\end{figure}
\Cref{fig:movingarc-error-history-comparison} shows that, for \ac{aq}, the
reference loss decreases steadily without stagnation
(\Cref{fig:movingarc-loss-history-panel}), and the $L^2$ and $H^1$ error
histories improve overall in a compatible manner
(\Cref{fig:movingarc-error-history-panel}), indicating sustained generalisation.
The non-adaptive alternatives exhibit oscillations, stagnation or slow
generalisation. The \ac{aq} strategy also enjoys a clear time advantage,
reaching the maximum number of epochs in 3,136 seconds, compared with the next
best runtime of 4,362 seconds, for a gain of more than $28\%$. Its final
$L^2$ and $H^1$ errors are $3.96 \times 10^{-6}$ and $1.53 \times 10^{-5}$,
respectively. These are roughly three and five times smaller than the next
best approximation, obtained with \ac{qmc} (Halton), which yields
$9.35 \times 10^{-6}$ and $8.48 \times 10^{-5}$.
For the quadrature comparison in this experiment, we omit uniform quadrature
because, as seen in earlier experiments, it cannot resolve sharp solution
profiles adequately. The same limitation also applies to the subsequent 3D
convection-diffusion problem with a boundary layer.

\subsection{The 3D convection-diffusion problem with boundary layer}

We finally consider a purely 3D stationary convection-diffusion problem. It
features a steep boundary layer and therefore presents a significant challenge
for both traditional numerical methods and \ac{nn} approximations. As
discussed earlier in~\Cref{sec:1d-adv-diff}, the 1D analogue already showed a
clear benefit from adaptive quadrature. Similar to the previous experiment,
this benchmark is drawn from the NIST-\ac{amr} collection~\cite{NISTAMR} and
is defined as follows:
\begin{align} \label{eq:adv-diff-3d-problem}
- \epsilon \Delta u + \pmb{\beta}\cdot \nabla u &= f \quad \text{in \;\; } \Omega\, \subset \mathbb{R}^3\\
u &= g \quad \text{on \; } \partial\Omega \,.
\end{align}
We take $\epsilon = 10^{-2}$ and $\pmb{\beta} = (2,1,1)$. The domain is
$\Omega = (-1,1)^3$, and $f$ and $g$ are chosen so that the manufactured
solution is
\begin{equation}
u(x,y,z) = \left(1 - e^{-(1-x)/\epsilon}\right) \left(1 - e^{-(1-y)/\epsilon}\right) \left(1 - e^{-(1-z)/\epsilon}\right) \cos\left(\pi\left(x + y + z\right)\right).
\end{equation}
For this problem, we use the Xiao-Gimbutas quadrature
\cite{XiaoGimbutas2010} with degree pair $(9,11)$, again to improve
data-efficiency relative to standard tensor-product rules. This mirrors the
choice made in the preceding parabolic Poisson experiment. The \ac{aq} base
mesh is a uniform grid with 5 partitions in each direction. To keep the total
number of quadrature points within the memory limits of the V100 GPU, we use
$\texttt{rtol} = 1 \times 10^{-2}$ together with the tighter
$\texttt{refresh\_tol} = 5 \times 10^{-2}$. The \ac{nn} has 4 hidden
layers of width 50 with $\tanh$ activation. Training is run for at most
\num{15000} epochs, with the Dirichlet penalty parameter fixed at 10 for
consistency with the earlier experiments.

\begin{figure}
  \centering
  \begin{subfigure}[t]{1.0\linewidth}
    \centering
    \includegraphics[width=0.82\linewidth]{
        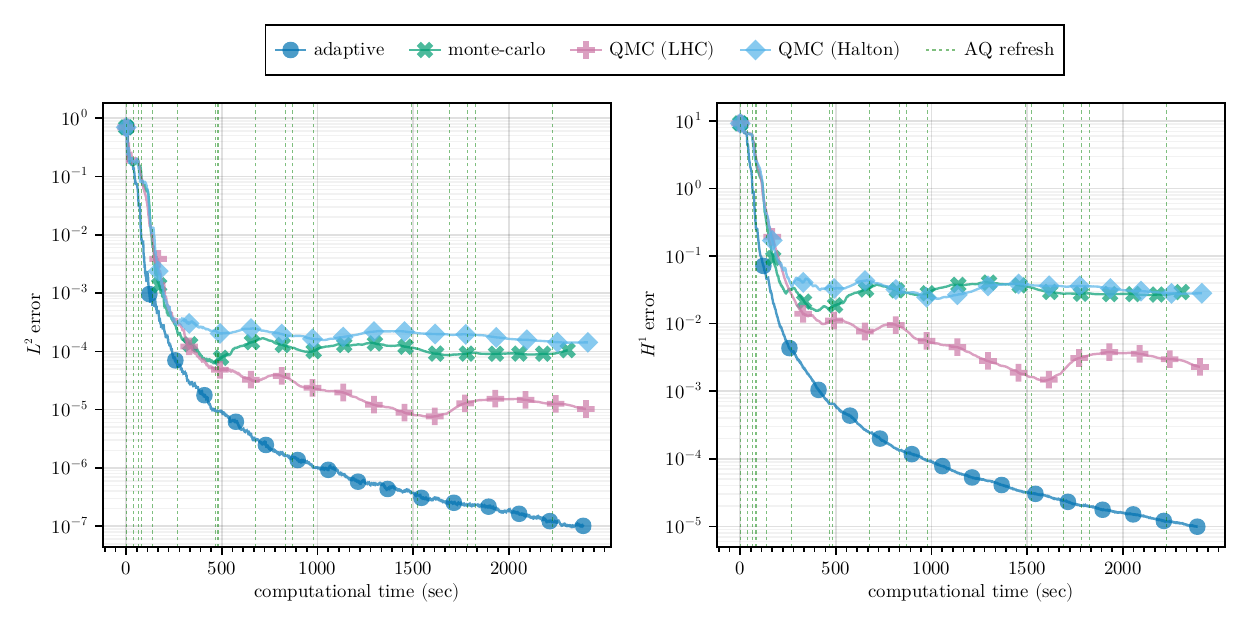
    }
    \caption{$L^2$ and $H^1$ error history comparison across all considered quadrature strategies.}
    \label{fig:adv-diff-3d-error-history-panel}
  \end{subfigure}
  \medskip
  \begin{subfigure}[t]{1.0\linewidth}
    \centering
    \includegraphics[width=0.82\linewidth,trim={0 0 0 38},clip]{
        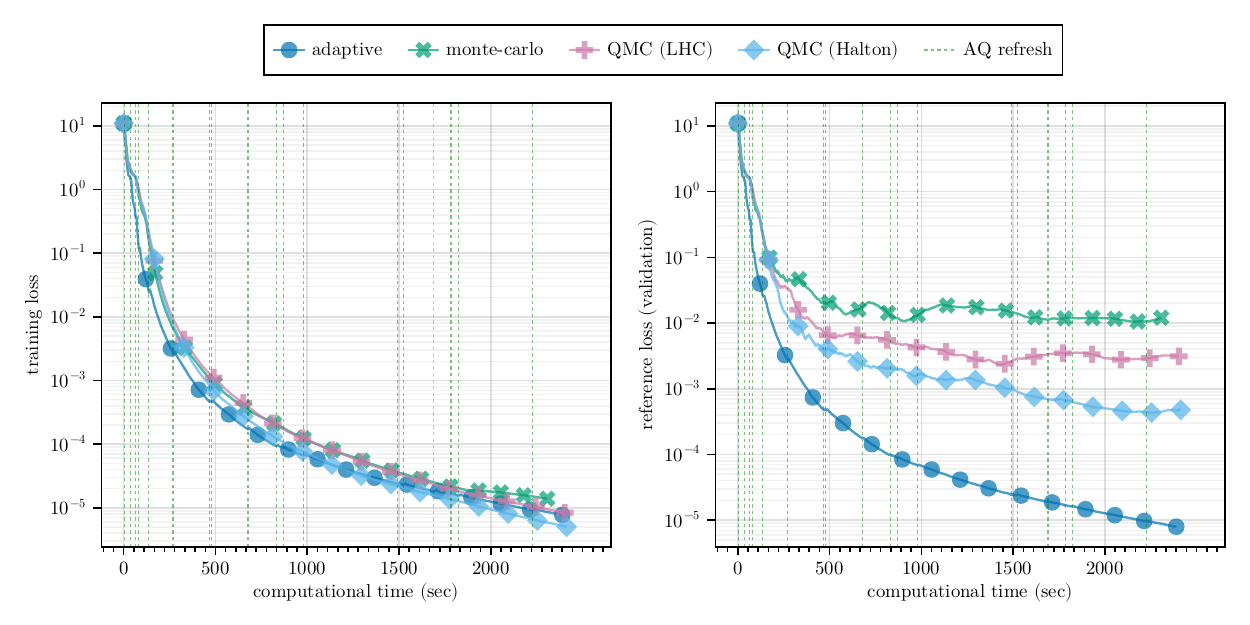
    }
    \caption{Training and reference loss history comparison across all considered quadrature strategies.}
    \label{fig:adv-diff-3d-loss-history-panel}
  \end{subfigure}
  \caption{Loss and error histories for the 3D convection-diffusion  problem.}
  \label{fig:adv-diff-3d-error-history-comparison}
\end{figure}
The comparison in~\Cref{fig:adv-diff-3d-error-history-panel} and
\Cref{fig:adv-diff-3d-loss-history-panel} further reinforces the advantage of
the \ac{aq} strategy. It yields $L^2$ and $H^1$ errors that are slightly more
than two orders of magnitude lower than the next best results, obtained by
\ac{qmc} (\ac{lhc}). As in the earlier experiments, the gain in $H^1$ is
slightly more pronounced than in $L^2$. Consistently across the experiments with sharp solution profiles,
\ac{qmc} (\ac{lhc}) performs significantly better than \ac{mc} and
\ac{qmc} (Halton). Moreover, although the training losses behave similarly
across the considered approaches, the reference loss reveals much slower
convergence for the non-adaptive methods. This creates a substantial
generalisation gap and leads to errors that are orders of magnitude worse.
These findings again highlight the importance of global quadrature error
control for good generalisation, ensuring that the \ac{nn}'s
approximation power is used to resolve the solution rather than wasted on
overfitting.

\section{Conclusion} \label{sec:conclusion}

We studied the role of numerical quadrature in residual-minimisation methods
for neural approximation of \acp{pde}. Our analysis separates approximation,
quadrature and optimisation errors, and the resulting nonlinear Strang-type
estimate shows that controlling the perturbation of the discrete residual loss
is important for obtaining accurate approximations.

Motivated by this analysis, we proposed an anisotropic, bisection-based
$h$-adaptive composite quadrature with non-nested primal and reference rules,
together with a refresh-based training strategy that rebuilds the quadrature
only when an online error indicator exceeds a prescribed threshold. This
combines explicit control of quadrature-induced perturbations with moderate
computational cost throughout training.

Across a broad set of benchmark problems, including elliptic, time-dependent
and parametric \acp{pde} in one, two and three spatial dimensions, the
proposed approach reduced the gap between training and reference losses, used
quadrature points more efficiently than non-adaptive strategies and delivered
strong approximation accuracy in the tested problems. The experiments also
indicate that only a small number of quadrature refreshes is needed relative to
the total number of training iterations, making the approach practical for
residual-based neural \ac{pde} solvers.

\printbibliography 

\end{document}